\newtheorem{theorem}{Theorem}[section]
\newtheorem{lemma}[theorem]{Lemma}
\newtheorem{proposition}[theorem]{Proposition}
\newtheorem{definition}[theorem]{Definition}
\newtheorem{corollary}[theorem]{Corollary}
\newtheorem{examples}[theorem]{Examples}
\newtheorem{note}[theorem]{Note}
\newcommand{\N}{{\mathbb N}}
\newcommand{\Z}{{\mathbb Z}}
\newcommand{\id}{\operatorname{id}}
\newcommand{\from}{\colon}
\begin{document}
\title[Cofinite Graphs and their Profinite Completions]{Cofinite Graphs and their Profinite Completions}

\author{Amrita Acharyya}
\address{Department of Mathematics and Statistics\\
University of Toledo, Main Campus\\
Toledo, OH 43606-3390}
\email{Amrita.Acharyya@utoledo.edu}

\author{Jon M. Corson}
\address{Department of Mathematics\\
University of Alabama\\
Tuscaloosa, AL 35487-0350}
\email{jcorson@ua.edu}

\author{Bikash Das}
\address{Department of Mathematics\\
University of North Georgia, Gainesville Campus\\
Oakwood, Ga. 30566}
\email{Bikash.Das@ung.edu}

\subjclass[2010]{05C63, 54F65, 57M15, 20E18}

\keywords{profinite graph, cofinite graph, profinite group, cofinite group, uniform space, completion, cofinite entourage}

\begin{abstract}
We generalize the idea of cofinite groups, due to B. Hartley, ~\cite{bH77}. First we define cofinite spaces in general. Then, as a special situation, we study cofinite graphs and their uniform completions. 

The idea of constructing a cofinite graph starts with defining a uniform topological graph $\Gamma$, in an appropriate fashion. We endow abstract graphs with uniformities corresponding to separating filter bases of equivalence relations with finitely many equivalence classes over $\Gamma$. It is established that for any cofinite graph there exists a unique cofinite completion.
\end{abstract}

\maketitle


\section{Introduction} \label{s:Intro}

Embedding an algebraic object into a projective limit of well-behaved objects is a frequently used tactic in algebra and number theory. 
\begin{enumerate}
\item If $R$ is any commutative ring and $I$ is an ideal, then the $I$-adic completion of $R$ is the projective limit of the inverse system of quotient rings $R/I^n$, $n\ge0$. 
\item The case of $R=\Z$ and $I=(p)$, where $p$ is a prime, yields the $p$-adic integers. These rings are instances of projective limits of finite rings, and thus are profinite rings.
\item In group theory, any residually  finite group can be embedded in a profinite group (i.e., projective limit of finite groups).
\end{enumerate}

There is a topological approach to producing such projective limits known as {\it completion\/}. By imposing a suitable topology on the object making it into a topological object so that Cauchy sequences or Cauchy nets can be defined and used to construct the completion.  In the case of a residually finite group, Hartley~\cite{bH77} introduced the terminology of cofinite groups.\\

Initially we note that, without some modification, the topological approach used in the classical situations to construct and distinguish various completions breaks down for graphs in general. 
The following easy example illustrates this point. 

\subsubsection*{Example} Let $\Gamma$ be an abstract graph with $V(\Gamma) = \{x\mid x\in \Z\}$, $E(\Gamma) = \left\{e_x\mid x\in V(\Gamma)\setminus\left\{0\right\}\right\}$ with $s(e_x) = x - 1$, if $x>0$, $s(e_x) = x + 1$, if $x<0$, $t(e_x) = x$.\\
For each $N\in\N$, form the finite discrete graphs $\Gamma_N$ where $V(\Gamma_N) = \{-N-1,\cdots,0,\cdots, N+1\}$, $E(\Gamma_N) = \left\{e_x\mid x\in V(\Gamma_N)\setminus \left\{0\right\}\right\}\cup\{e, e^{\prime}\}$ and $s(e_x) = x - 1$, if $x>0$, $s(e_x) = x + 1$, if $x<0$, $t(e_x) = x$, $s(e) = N + 1 = t(e), s(e^{\prime}) = -N - 1 = t(e^{\prime})$.\\

For all $N\in \N\cup\{0\}$, let us now define maps of graphs $q_N\from\Gamma\to\Gamma_N$ via 

$q_N(x) = \begin{cases}

  x & \text{$\left| x\right|\leq N + 1$} \\

  N + 1 & \text{$x\geq N + 2$} \\

  -(N + 1) & \text{$x\leq -(N + 2)$}

\end{cases} q_N(e_x) = \begin{cases}

e_x & \text{$\left| x\right|\leq N + 1$} \\

e & \text{$x\geq N + 2$} \\

e^{\prime} & \text{$x\leq -(N + 2)$}

\end{cases}$\\

Consider the uniformity $\Phi_1$ over $\Gamma$ which is induced by the fundamental system of entourages $R_N = (q_N\times q_N)^{-1}[D(\Gamma_N)]$. Clearly, with respect to $\Phi_1, q_N$ is uniformly continuous for all $N\in \N\cup\{0\}$. Let $\tau_{\Phi_1}$ be the topology induced by $\Phi_1$. If $x\in V(\Gamma)$, then $R_{\left|x\right|}[x] = \{x\}$. Similarly, if $e_x\in E(\Gamma)$, then $R_{\left|x\right|}[e_x] = \{e_x\}$. Hence $\tau_{\Phi_1}$ represents the discrete topology over $\Gamma$.

Now let us define $\varphi_{ij}\from\Gamma_j\to\Gamma_i$ for all $i\leq j \in \N\cup\{0\}\}$ via $\varphi_{ij}(x) = \begin{cases}

x & \text{$\left| x\right|\leq i$} \\

i & \text{$x\geq i +1$} \\

-i & \text{$x\leq -(i + 1)$}

\end{cases}\varphi_{ij}(e_x) = \begin{cases}

e_x & \text{$\left| x\right|\leq i$} \\

e & \text{$x\geq i + 1$} \\

e^{\prime} & \text{$x\leq -(i + 1)$}

\end{cases}$\\

$\varphi_{ij}(e) = e, \varphi_{ij}(e^{\prime}) = e^{\prime}$.\\

Clearly, each $\varphi_{ij}$ is a uniformly continuous map of graphs and for $i = j$, $\varphi_{ii} = \id_{\Gamma_i}$. Also if $i\leq j\leq k$, then $\varphi_{jk}\circ\varphi_{ij} = \varphi_{ik}$. Hence $(\Gamma_i, \varphi_{ij})_{i\leq j\in \N\cup\{0\}}$ forms an inverse system of finite discrete graphs. Then by Theorem~\ref{Existence}, we can deduce that $\widehat{\Gamma} = \varprojlim_{i\in \N\cup\{0\}}\Gamma_i$ is a profinite completion of $\Gamma$.\\

Now let us consider the following graph $\Delta$, with $V(\Delta) = \Z\cup\{-\infty, \infty\}$,\\
$E(\Delta) = \{e_x\mid x\in V(\Gamma)\setminus\{0,\infty, -\infty\}\}\cup\{e, e^{\prime}\}$. The source and target maps are defined as $s(e_x) = x - 1$, if $x>0$, $s(e_x) = x + 1$, if $x<0$, $t(e_x) = x$, $s(e) = \infty = t(e), s(e^{\prime}) = -\infty = t(e^{\prime})$.\\

Let $G_1 = \{x|x>0\}$, $G_2 = \{x|x<0\}$, $G_3 = \{e_x|x>0\}$, $G_4 = \{e_x|x<0\}$, $p_1 = \infty, p_2 = e, p_3 = -\infty, p_4 = e^{\prime}$. Now let us define $\tau$ by the collection of the open sets $O\subseteq \Delta$ such that $O\bigcap(\Delta\setminus[\cup_{i = 1}^4\{p_i\}])$ is open in $(\Delta\setminus[\cup_{i = 1}^4\{p_i\}])$,  and for $p_i\in O$, $[(\Delta\setminus[\cup_{i = 1}^4\{p_i\}])\setminus O]\bigcap G_i$ is finite. Then $\tau$ forms a topology over $\Delta$ and with respect to $\tau, \Delta$ is compact, Hausdorff, totally disconnected and thus a compactification (4 point) of the graph $\Gamma$.

Let us define maps $\theta_N\from\Delta\to\Gamma_N$ via 

$\theta_N(x) =  \begin{cases}

x & \text{$\left| x\right|\leq N + 1$} \\

i & \text{$x\geq N + 2$} \\

-i & \text{$x\leq -(N + 2)$}

\end{cases} \theta_N(e_x) = \begin{cases}

e_x & \text{$\left| x\right|\leq N + 1$} \\

e & \text{$x\geq N + 2$} \\

e^{\prime} & \text{$x\leq -(N + 2)$}

\end{cases}$\\

$\theta_N(\infty) = N + 1, \theta_N(-\infty) = - N - 1, \theta_N(e) = e, \theta_N(e^{\prime}) = e^{\prime}$.\\

Clearly each $\theta_N$ is a uniformly continuous map of graphs. \\

Thus $(\Delta, \theta_N)_{N\in \N\cup\{0\}}$ is compatible with the inverse system\\
$(\Gamma_i, \varphi_{ij})_{i\leq j\in \N\cup\{0\}}$ and thus there exists a uniformly continuous map of graphs $\theta\from\Delta\to\widehat{\Gamma}$ such that for the canonical projection maps\\
$\varphi_N\from\widehat{\Gamma}\to\Gamma_N$ the following diagram commutes for all $i\leq j\in \N\cup\{0\}$:
$$\begindc{\commdiag}[25]
\obj(-15,0)[1]{$\widehat{\Gamma}$}
\obj(15,0)[2]{$\Gamma_N$}
\obj(0,15)[3]{$\Delta$}
\mor{1}{2}{$\varphi_N$}
\mor{3}{1}{$\theta$}
\mor{3}{2}{$\theta_N$}
\enddc$$
Since for all $N\in\N\cup\{0\}, \theta_N$ is surjective, $\Gamma_N = \theta_N(\Delta) = \varphi_N(\theta(\Delta))$. Thus $\overline{\theta(\Delta)} = \widehat{\Gamma}$. But since $\Delta$ is compact and $\widehat{\Gamma}$ is Hausdorff, $\theta(\Delta)$ is a closed subset of $\widehat{\Gamma}$ and thus $\theta(\Delta) = \overline{\theta(\Delta)} = \widehat{\Gamma}$. Hence $\theta$ is onto. Also let $\delta_1, \delta_2$ in $\Delta$ be such that $\theta(\delta_1) = \theta(\delta_2)$ and thus $\varphi_N(\theta(\delta_1)) = \varphi_N(\theta(\delta_2))$. Then for all $N\in\N\cup\{0\}$, $\theta_N(\delta_1) = \theta_N(\delta_2)$ and thus $\delta_1 = \delta_2$. Hence $\theta$ is one one and thus $\theta$ is a continuous bijection from a compact space $\Delta$ to a Hausdorff space $\widehat{\Gamma}$ and thus a homeomorphism. Hence $\Delta$ is the cofinite completion of $\Gamma$.

Let us now define $\Sigma_N$, with $V(\Sigma_N) = \{-N,\cdots, 0,\cdots, N, N+1\}$, $E(\Sigma_N) = \{e_x\mid x\in V(\Sigma_N)\setminus \{0\}\}\cup\{e\}$and $s(e_x) = x - 1$, if $x > 0$, $s(e_x) = x + 1$, if $x < 0$, $t(e_x) = x$, if $-N\leq x\leq N+ 1$, $t(e_{-(N + 1)}) = N + 1$, $s(e) = N + 1 = t(e)$.\\

Let us now define map of graphs $q^{\prime}_N\from\Gamma\to\Sigma_N$ via 

$q^{\prime}_N(x) = \begin{cases}

  x & \text{$\left| x\right|\leq N$} \\

  N + 1 & \text{$\left| x\right|\geq N + 1$} 

 \end{cases} q_N^{\prime}(e_x) = \begin{cases}

e_x & \text{$\left| x\right|\leq N + 1$} \\

e & \text{$\left| x\right|\geq N + 2$} 

\end{cases}$

Consider the uniformity $\Phi_2$ over $\Gamma$, consisting of the entourages $S_N = (q^{\prime}_N\times q^{\prime}_N)^{-1}[D(\Sigma_N)]$. Clearly, with respect to $\Phi_2, q^{\prime}_N$ is uniformly continuous for all $N\in \N\cup\{0\}$. Let $\tau_{\Phi_2}$ be the topology induced by $\Phi_2$. Now if $x\in V(\Gamma)$, then $R_{\left|x\right|}[x] = \{x\}$, similarly, if $e_x\in E(\Gamma)$, $R_{\left|x\right|}[e_x] = \{e_x\}$. Hence $\tau_{\Phi_2}$ represents the discrete topology over $\Gamma$ too.

Now let us define $\psi_{ij}\from\Sigma_j\to\Sigma_i$ for all $i\leq j \in \N\cup\{0\}\}$ as follows: 

$\psi_{ij}(x) = \begin{cases}

x & \text{$\left| x\right|\leq i$} \\

i & \text{$\left| x\right|\geq i + 1$} 

\end{cases} \psi_{ij}(e_x) = \begin{cases}

e_x & \text{$\left| x\right|\leq i + 1$} \\

e & \text{$\left| x\right|\geq i + 2$}\\

\end{cases}$\\

 $\psi_{ij}(e) = e$. \\

Clearly, each $\psi_{ij}$ is a uniformly continuous map of graphs and for $i = j$, $\psi_{ii} = \id_{\Sigma_i}$, $i\leq j\leq k$, $\psi_{jk}\circ\psi_{ij} = \psi_{ik}$. Hence $(\Sigma_i, \psi_{ij})_{i\leq j\in \N\cup\{0\}}$ forms an inverse system of finite discrete graphs. Then by Theorem~\ref{Existence}, we deduce that $\widehat{\Sigma} = \varprojlim_{i\in \N\cup\{0\}}\Sigma_i$ is a profinite completion of $\Gamma$. 

Finally, let us now consider the graph $\Delta^{\prime}$, with $V(\Delta^{\prime}) = \Z\cup\{\infty\}$,\\
$E(\Delta^{\prime}) = \{e_x\mid x\in V(\Delta^{\prime})\setminus\{0,\infty\}\}\cup\{e\}$ with $s(e_x) = x - 1$, if $x>0$, $s(e_x) = x + 1$, if $x<0$, $t(e_x) = x$, $s(e) = \infty = t(e)$. Now let $G_1 = \{x\mid x \in Z\}, G_2 = \{e_x\mid x \in \Z\setminus\{0\}\}, p_1 = \infty, p_2 = e$. Now let us define $\tau^{\prime}$ by the collection of the open sets $O\subseteq \Delta$ such that $O\bigcap(\Delta^\prime\setminus[\cup_{i = 1}^2\{p_i\}])$ is open in $(\Delta^\prime\setminus[\cup_{i = 1}^2\{p_i\}])$,  and for $p_i\in O$, $[(\Delta^\prime\setminus[\cup_{i = 1}^4\{p_i\}])\setminus O]\bigcap G_i$ is finite. Then $\tau$ forms a topology over $\Delta^\prime$ and with respect to $\tau^\prime, \Delta^\prime$ is compact, Hausdorff, totally disconnected and thus a compactification (2 point) of the graph $\Gamma$.\\

Let us define maps $\zeta_N\from\Delta^{\prime}\to\Sigma_N$ via 

$\zeta_N(x) = \begin{cases}

x & \text{$\left| x\right|\leq N$} \\

e & \text{$\left| x\right|\geq N + 1$} 

\end{cases} \zeta_N(e_x) = \begin{cases}

e_x & \text{$\left| x\right|\leq N + 1$} \\

e & \text{$\left| x\right|\geq N + 2$}\\

\end{cases}$\\

$\zeta_N(\infty) = N + 1$, $\zeta_N(e) = e$.\\

Clearly each $\zeta_N$ is a uniformly continuous map of graphs.\\

So $(\Delta^{\prime}, \zeta_N)_{N\in \N\cup\{0\}}$ is compatible with $(\Sigma_i, \psi_{ij})_{i\leq j\in \N\cup\{0\}}$ and thus there exists a uniformly continuous map of graphs $\zeta\from\Delta^{\prime}\to\widehat{\Sigma}$ such that for the canonical projection maps $\psi_N\from\widehat{\Sigma}\to\Sigma_N$ the following diagram commutes for all $i\leq j\in \N\cup\{0\}$:
$$\begindc{\commdiag}[25]
\obj(-15,0)[1]{$\widehat{\Sigma}$}
\obj(15,0)[2]{$\Sigma_N$}
\obj(0,15)[3]{$\Delta^{\prime}$}
\mor{1}{2}{$\psi_N$}
\mor{3}{1}{$\zeta$}
\mor{3}{2}{$\zeta_N$}
\enddc$$
Since for all $N\in\N\cup\{0\}, \zeta_N$ is surjective, $\Sigma_N = \zeta_N(\Delta^{\prime}) = \psi_N(\zeta(\Delta^{\prime}))$. Thus $\overline{\zeta(\Delta^{\prime})} = \widehat{\Sigma}$. But since $\Delta^{\prime}$ is compact and $\widehat{\Sigma}$ is Hausdorff, $\zeta(\Delta^{\prime})$ is a closed subset of $\widehat{\Sigma}$ and thus $\zeta(\Delta^{\prime}) = \overline{\zeta(\Delta^{\prime})} = \widehat{\Sigma}$. Hence $\zeta$ is onto. Also let $\delta^{\prime}_1, \delta^{\prime}_2\in\Delta^{\prime}$ be such that $\zeta(\delta^{\prime}_1) = \zeta(\delta^{\prime}_2)$ and thus $\psi_N(\zeta(\delta^{\prime}_1)) = \psi_N(\zeta(\delta^{\prime}_2))$. Then for all $N\in\N\cup\{0\}$, $\zeta_N(\delta^{\prime}_1) = \zeta_N(\delta^{\prime}_2)$ and thus $\delta^{\prime}_1 = \delta^{\prime}_2$. Hence $\zeta$ is one one and thus $\zeta$ is a continuous bijection from a compact space $\Delta^{\prime}$ to a Hausdorff space $\widehat{\Sigma}$ and thus a homeomorphism. Hence $\Delta^{\prime}$ is the cofinite completion of $\Gamma$.

But $\Delta$ is not isomorphic to $\Delta^{\prime}$ as they are the two point and 4 point compactifications for $\Gamma$ respectively. So the example indicates that different uniformities that induces the same topology on a graph can lead us to two non isomorphic completions. 


\section{Preliminaries} \label{s:Preliminaries}


\subsection{Binary relations} \label{s:binrel}

Let $X$ and $Y$ be sets and let $R\subseteq X\times Y$. 
Such a subset $R$ is called a {\it binary relation\/} from $X$ to $Y$.  
For any $x\in X$, we write 
$R[x]=\{y\in Y\mid (x,y)\in R\}$. More generally, for any subset $A$ of $X$, let 
$R[A]=\bigcup\{R[a] \mid a\in A\}$.

The {\it inverse\/} of a binary relation $R\subseteq X\times Y$ is the binary relation 
$R^{-1}\subseteq Y\times X$ given by $R^{-1}=\{(y,x)\mid (x,y)\in R\}$. 
The {\it composition\/} of binary relations $R\subseteq X\times Y$ 
and $S\subseteq Y\times Z$  is the binary relation $SR\subseteq X\times Z$ given by 
$$
SR=\{ (x,z) 
\mid\hbox{there exists $y\in Y$ such that $(x,y)\in R$ and $(y,z)\in S$} \}.
$$
The {\it diagonal\/} $D(X)=\{(x,x)\mid x\in X\}$ in $X\times X$ is the ``equality'' binary relation on $X$.  
For any relation  $R\subseteq X\times Y$, note that the compositions $R\,D(X)=R$ and $D(Y)\,R=R$.

\begin{note}
Composition of binary relations is an associative operation: 
if $R_i\subseteq X_i\times X_{i+1}$ is a binary relation for $i=1,2,3$ then 
$$ 
(R_3R_2)R_1=R_3(R_2R_1).
$$
\end{note}
\begin{note}
Let $R\subseteq X\times Y$ and $S\subseteq Y\times Z$ be binary relations. Then
\begin{enumerate}
\item $(SR)^{-1}=R^{-1}S^{-1}$.
\item for any subset $A$ of $X$, we have that $(SR)[A]=S[R[A]]$.
\end{enumerate}
\end{note}
Let $T_i\subseteq X_i\times Y_i$ be a binary relation for $i=1,2$.  
Then we denote by $T_1\times T_2$ the binary relation from $X_1\times X_2$ to 
$Y_1\times Y_2$ consisting of all pairs $((x_1,x_2),(y_1,y_2))$ such that $(x_i,y_i)\in T_i$ for $i=1,2$. 

\begin{note}\label{rel cart pro}
If $R\subseteq X_1\times X_2$ is a binary relation, then $S = (T_1\times T_2)[R]$ is a binary relation from $Y_1$ to $Y_2$ and 
$$
S=(T_1\times T_2)[R]
=\bigcup\{T_1[x_1]\times T_2[x_2]\mid (x_1,x_2)\in R\}
=T_2RT_1^{-1}
$$
$$\begin{CD}
X_1 @>R>> X_2 \\
@V{T_1}VV @VV{T_2}V \\
Y_1 @>S>> Y_2 
\end{CD}$$ 
\end{note}

\subsection*{Equivalence relations}\label{ss:equiv}

Let $X$ be a set. A {\it binary relation\/} on $X$ is a subset $R\subseteq X\times X$. 
A binary relation $R$ on $X$ is called an {\it equivalence relation\/} if it satisfies three properties:
\begin{enumerate}
\item Reflexive: $R$ contains the diagonal $D(X)=\{(x,x)\mid x\in X\}$.
\item Symmetric: $R^{-1}=R$.
\item Transitive: $R^2\subseteq R$.
\end{enumerate}
It follows that if $R$ is an equivalence relation, then $R^2=R$. To see this suppose $(x,y)\in R$. Then this implies that $(x,y), (y,y)\in R$ by reflexivity of $R$ and thus $(x,y)\in R^2$ by transitivity of $R$. 

\begin{note} \label{n:intersection}
Let $(R_i \mid i\in I)$ be a family of equivalence relations on a set $X$. Then the intersection
$\bigcap_{i\in I}R_i$ is also an equivalence relation on $X$.
\end{note}

It follows that every relation $S$ on $X$ is contained in a unique smallest equivalence relation---namely, the intersection of all equivalence relations that contain $S$. 
We denote it by $\langle S\rangle$ and call it the {\it equivalence relation generated by\/} $S$. 

\begin{note} \label{n:composition}
Let $R_1$ and $R_2$ be equivalence relations on a  set $X$. Then $R_1R_2$ is an equivalence relation if and only if $R_2R_1=R_1R_2$. 
In this case, $R_2R_1=\langle R_1\cup R_2\rangle=R_1R_2$.
\end{note}

\begin{note}[Modular Law]
Let $R$, $R_1$, and $R_2$ be equivalence relations on a set $X$ such that $R\subseteq R_1$. Then $R(R_1\cap R_2)=R_1\cap RR_2$.
\end{note}

\begin{note} Let $X, Y$ be two sets and $f\from X\to Y$ be a function of sets. Let $f = \{(x,y)\in X\times Y\mid f(x) = y\}$ and $f^{-1} = \{(y,x)\in Y\times X\mid f(x) = y\}$. Then the following properties are true,
\begin{enumerate}
\item $(f\times f)[S]=fSf^{-1}$, for all relations $S$ on $X$ and $(f\times f)^{-1}[R]=f^{-1}Rf$, for all relations $R$ over $Y$. (This is a particular case of \ref{rel cart pro}) 
\item The realtion $K_f = \{(x_1,x_2)\in X\times X\mid f(x_1) = f(x_2)\} = (f^{-1}) f = (f\times f)^{-1}[D(Y)]$ is an equivalence relation.
\item $f(f^{-1})\subseteq D(Y)$
\end{enumerate}
\end{note}

\begin{theorem}[Correspondence Theorem]\label{correspondence}
If $f\from X\to Y$ is a map, then $(f\times f)^{-1}[R]$ is an equivalence relation on $X$, for all equivalence relations $R$ over $Y$ and if $f$ is a surjection, also $(f\times f)[S]$ is an equivalence relation on $Y$, for all equivalence relations $S$ over $X$, that contain $K_f$. Moreover if $R$ and $S$ have finitely many equivalence classes in $Y$ and $X$ respectively  then $(f\times f)^{-1}[R]$ and $(f\times f)[S]$ have finitely many equivalence classes, in $X$ and $Y$, respectively. 
\end{theorem}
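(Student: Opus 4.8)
The plan is to work throughout with the relation-algebra identities gathered in the preceding notes rather than chasing pairs of elements. By part (1) of the last note I may rewrite $(f\times f)^{-1}[R]=f^{-1}Rf$ and $(f\times f)[S]=fSf^{-1}$, after which the entire argument reduces to manipulating compositions using associativity, the rule $(BA)^{-1}=A^{-1}B^{-1}$, the containment $ff^{-1}\subseteq D(Y)$, the identity $f^{-1}f=K_f$, and the facts that an equivalence relation $R$ satisfies $R^{-1}=R$ and $R^2=R$.

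First I would verify that $f^{-1}Rf$ is an equivalence relation on $X$ for every equivalence relation $R$ on $Y$, with no surjectivity required. Reflexivity follows from $D(X)\subseteq K_f=f^{-1}f=f^{-1}D(Y)f\subseteq f^{-1}Rf$, using $D(Y)\subseteq R$ and monotonicity of composition. Symmetry is the identity $(f^{-1}Rf)^{-1}=f^{-1}R^{-1}(f^{-1})^{-1}=f^{-1}Rf$. Transitivity is the computation $(f^{-1}Rf)(f^{-1}Rf)=f^{-1}R(ff^{-1})Rf\subseteq f^{-1}RD(Y)Rf=f^{-1}R^2f=f^{-1}Rf$, where the containment is supplied by $ff^{-1}\subseteq D(Y)$.

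Next, assuming $f$ surjective and $K_f\subseteq S$, I would show $fSf^{-1}$ is an equivalence relation on $Y$. Here surjectivity upgrades $ff^{-1}\subseteq D(Y)$ to the equality $ff^{-1}=D(Y)$, which yields reflexivity via $D(Y)=ff^{-1}=fD(X)f^{-1}\subseteq fSf^{-1}$. Symmetry is again immediate from $S^{-1}=S$. The transitivity step is where the hypothesis $K_f\subseteq S$ is essential, and I expect it to be the only genuinely delicate point: one computes $(fSf^{-1})(fSf^{-1})=fS(f^{-1}f)Sf^{-1}=fSK_fSf^{-1}$, and then uses $K_f\subseteq S$ together with $S^2=S$ (hence $S^3=S$) to conclude $SK_fS\subseteq S$, so that $(fSf^{-1})^2\subseteq fSf^{-1}$. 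Without $K_f\subseteq S$ the middle factor $f^{-1}f=K_f$ would obstruct transitivity, which is precisely why the containment is imposed.

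Finally, for the finiteness claims I would count equivalence classes. The assignment sending a class of $f^{-1}Rf$ to the $R$-class of $f(x)$, namely $[x]\mapsto[f(x)]_R$, is well-defined and injective, so it embeds the set of classes of $(f\times f)^{-1}[R]$ into the set of $R$-classes; hence finitely many $R$-classes forces finitely many classes on $X$. Dually, when $f$ is onto, the assignment $[x]_S\mapsto[f(x)]$ from $S$-classes to $fSf^{-1}$-classes is well-defined and surjective, so finitely many $S$-classes forces finitely many classes of $(f\times f)[S]$. These two monotonicity observations complete the proof; throughout, the bulk of the work is the transitivity computation in the surjective case, where the hypothesis $K_f\subseteq S$ does the decisive work.
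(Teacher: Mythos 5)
Your proof is correct. The paper in fact states this Correspondence Theorem without giving a proof, but the identities it records in the immediately preceding notes — $(f\times f)^{-1}[R]=f^{-1}Rf$, $(f\times f)[S]=fSf^{-1}$, $K_f=f^{-1}f$, and $ff^{-1}\subseteq D(Y)$ — are exactly the toolkit your argument runs on, so your computation (including the decisive use of $K_f\subseteq S$ via $SK_fS\subseteq S^3=S$ in the transitivity step, and the injective/surjective class-counting maps for the finiteness claims) supplies precisely the proof the paper intends and omits.
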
 

\subsection{Cofinite equivalence relations on topological spaces}

Note that if $R$ is an equivalence relation on a set $X$, then 
$$
R=R^3={\textstyle\bigcup}\{ R[a]\times R[b]\mid (a,b)\in R\}.
$$
\begin{proof}
$R^3 = R^2R =RR=R^2=R$, as $R$ is an equivalence relation.
Thus ${\textstyle\bigcup}\{ R[a]\times R[b]\mid (a,b)\in R\} =  RRR^{-1} = RRR = R^3$.
\end{proof}
For topological spaces, this leads to the following observation.

\begin{lemma} \label{n:open}
Let $R$ be an equivalence relation on a topological space $X$. Then $R$ is an open subset of the product space $X\times X$ if and only if $R[a]$ is an open subset of $X$, for each $a$ in $X$. 
\end{lemma}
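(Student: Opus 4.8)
The plan is to prove both implications directly, leaning on the decomposition $R={\textstyle\bigcup}\{R[a]\times R[b]\mid (a,b)\in R\}$ established immediately before the statement, which is exactly what makes the nontrivial direction work.

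For the forward direction, I would suppose $R$ is open in $X\times X$ and fix $a\in X$. The idea is to realize $R[a]$ as a preimage under a continuous slice map. Concretely, define $\iota_a\from X\to X\times X$ by $\iota_a(y)=(a,y)$; this is continuous because its two coordinate functions (the constant map at $a$ and the identity) are continuous. Since $R[a]=\{y\in X\mid (a,y)\in R\}=\iota_a^{-1}[R]$, it is the preimage of an open set under a continuous map, hence open. As $a$ was arbitrary, $R[a]$ is open for every $a\in X$. Note this direction does not use that $R$ is an equivalence relation.

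For the converse, I would assume $R[a]$ is open for each $a\in X$ and invoke the identity $R={\textstyle\bigcup}\{R[a]\times R[b]\mid (a,b)\in R\}$. For each pair $(a,b)\in R$, both $R[a]$ and $R[b]$ are open by hypothesis, so the box $R[a]\times R[b]$ is open in the product topology on $X\times X$. Thus $R$ is exhibited as a union of open sets and is therefore open.

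The only substantive point — and the only place where the equivalence-relation hypothesis is genuinely used — is the converse's reliance on the box decomposition of $R$. For a general subset of $X\times X$, openness of every slice $R[a]$ would not assemble into openness of $R$ in the product; it is precisely the identity $R=R^3={\textstyle\bigcup}\{R[a]\times R[b]\mid (a,b)\in R\}$, valid because $R$ is an equivalence relation, that supplies this. Everything else is a routine application of continuity and the definition of the product topology, so I expect no real obstacle beyond correctly citing that preceding identity.
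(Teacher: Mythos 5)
Your proof is correct and follows essentially the same route the paper intends: the forward direction is the routine observation that $R[a]$ is the preimage of $R$ under the continuous slice map $y\mapsto(a,y)$, and the converse uses exactly the identity $R=\bigcup\{R[a]\times R[b]\mid (a,b)\in R\}$ that the paper establishes immediately before the lemma, writing $R$ as a union of open boxes. Your remark that the equivalence-relation hypothesis enters only through that decomposition is also accurate.
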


Notice that in the situation of \ref{n:open}, the quotient space $X/R$ has the discrete topology.  Hence, we will refer to such an equivalence relation $R$ as being {\it co-discrete}. 
It should be  noted that the term 'open' for an equivalence relation on a topological space $X$ typically means something else, namely that the quotient map $X\to X/R$ is an open mapping. 

\begin{definition}[Cofinite equivalence relation]\rm
Let $X$ be a topological space. A {\it cofinite equivalence relation\/} on $X$ is an equivalence relation $R$ such that the quotient space $X/R$ is a finite discrete space.
\end{definition}

In other words, an equivalence relation $R$ on $X$ is cofinite if and only if $R$ is co-discrete and there are only finitely many equivalence classes of $X$ modulo $R$. 

\begin{lemma} \label{n:Properties of cofinite equiv rel}
Cofinite equivalence relations on topological spaces satisfy the following elementary properties:
\begin{enumerate}
\item The intersection $R_1\cap R_2$ of two cofinite equivalence relations $R_1$, $R_2$ on a space $X$ is also cofinite.
\item Let $S$ be an equivalence relation on a space $X$. If $S$ contains a cofinite equivalence relation, then $S$ itself is cofinite.
\item If $R_1$, $R_2$ are commuting equivalence relations on a space $X$, and if one of $R_1$, $R_2$ is cofinite, then the product $R_1R_2$ is also a cofinite equivalence relation.
\item If $f\from X\to Y$ is a continuous map of topological spaces and $R$ is a cofinite equivalence relation on $Y$, then $(f\times f)^{-1}[R]$ is a cofinite equivalence relation on $X$.
\item If $A$ is a subspace of a topological space $X$ and $R$ is a cofinite equivalence relation on $X$, then the restriction $R\cap(A\times A)$ is a cofinite equivalence relation on $A$. 
\item If $X$ is compact, then every co-discrete equivalence relation on $X$ is cofinite. 
\end{enumerate}
\end{lemma}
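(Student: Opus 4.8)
The plan is to exploit throughout the characterization recorded just after Lemma~\ref{n:open}: an equivalence relation is cofinite precisely when it is co-discrete (equivalently, each class $R[a]$ is open, by Lemma~\ref{n:open}) \emph{and} has only finitely many classes. So for each item I would verify these two conditions separately, handling openness of classes by elementary set-theoretic identities together with continuity, and handling finiteness of the class count by exhibiting a surjection or injection between the relevant quotient sets. I would also organize the six parts so as to reuse earlier ones: part (3) will follow from part (2), and part (5) is just the special case of part (4) in which $f$ is an inclusion.

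For part (1), Note~\ref{n:intersection} already gives that $R_1\cap R_2$ is an equivalence relation; co-discreteness follows since $(R_1\cap R_2)[a]=R_1[a]\cap R_2[a]$ is a finite intersection of open sets, and finiteness follows from the natural injection $X/(R_1\cap R_2)\hookrightarrow X/R_1\times X/R_2$ sending an $(R_1\cap R_2)$-class to the pair of its $R_1$- and $R_2$-classes. For part (2), the key point is that when $R\subseteq S$ each $S$-class is saturated, i.e.\ a union of whole $R$-classes, so $S[a]=\bigcup\{R[b]\mid b\in S[a]\}$ is a union of open sets, hence open; finiteness comes from the surjection $X/R\onto X/S$. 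Part (6) is where compactness enters: the distinct classes of a co-discrete relation form a cover of $X$ by pairwise disjoint open sets, so any finite subcover must already list all of them, forcing the number of classes to be finite.

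For part (4), the Correspondence Theorem~\ref{correspondence} supplies both that $(f\times f)^{-1}[R]$ is an equivalence relation and that it has finitely many classes whenever $R$ does; co-discreteness then follows from the identity $(f\times f)^{-1}[R][a]=f^{-1}\bigl(R[f(a)]\bigr)$, which is open by continuity of $f$. Part (5) is then immediate by taking $f\from A\hookrightarrow X$, since $(f\times f)^{-1}[R]=R\cap(A\times A)$ in that case. Finally, for part (3), Note~\ref{n:composition} shows that for commuting $R_1$, $R_2$ the product $R_1R_2=\langle R_1\cup R_2\rangle$ is an equivalence relation containing both factors; as one factor is cofinite, $R_1R_2$ contains a cofinite equivalence relation and is therefore cofinite by part (2).

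The steps are largely mechanical, so the main obstacle is really one of bookkeeping: ensuring the openness identities are written correctly and that the finiteness arguments in (1), (2), and (6) are airtight. I expect the most delicate point to be the co-discreteness claim in part (2), where one must first justify that $S[a]$ is a union of entire $R$-classes before concluding that it is open; everything else reduces cleanly to the two-condition criterion and to the reuse of parts (2) and (4).
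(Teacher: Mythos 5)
Your proposal is correct; the paper in fact states this lemma without proof, treating all six items as elementary consequences of the preceding material (Lemma~\ref{n:open}, Notes~\ref{n:intersection} and~\ref{n:composition}, and Theorem~\ref{correspondence}), which is exactly the toolkit you invoke. Your argument supplies precisely the omitted details in the intended way -- splitting cofiniteness into co-discreteness plus finiteness of the class count, reducing (3) to (2) via Note~\ref{n:composition}, and reducing (5) to (4) via the inclusion map -- so there is nothing to correct.
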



\section{Cofinite Spaces}

We now turn our attention to uniform spaces. Unless otherwise stated, the topology on a uniform space will always be the one induced by its uniformity.

Let $X$ be a uniform space. By a {\it cofinite entourage\/} on $X$ we will mean an entourage $R$ which is also a cofinite equivalence relation on~$X$. As consequences of \ref{n:Properties of cofinite equiv rel}, we see that cofinite entourages satisfy the following elementary properties:

\begin{lemma} \label{n:Properties of cofinite entourages}
Let $X$ and $Y$ be uniform spaces.
\begin{enumerate}
\item The intersection $R_1\cap R_2$ of two cofinite entourages $R_1$, $R_2$ of $X$ is also a cofinite entourage of $X$.
\item Let $S$ be an equivalence relation on $X$. If $S$ contains a cofinite entourage, then $S$ itself is a cofinite entourage.
\item If $R_1$, $R_2$ are commuting equivalence relations on a space $X$, and if one of $R_1$, $R_2$ is a cofinite entourage, then the product $R_1R_2$ is also a cofinite entourage.
\item If $f\from X\to Y$ is a uniformly continuous map and $R$ is a cofinite entourage of $Y$, then $(f\times f)^{-1}[R]$ is a cofinite entourage of $X$.
\item If $X$ is compact and Hausdorff, then every co-discrete equivalence relation on $X$ is a cofinite entourage. 
\end{enumerate}
\end{lemma}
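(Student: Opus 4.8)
The plan is to derive each of the five statements from the corresponding property of cofinite equivalence relations already established in Lemma~\ref{n:Properties of cofinite equiv rel}, and then to check separately that the resulting equivalence relation is in fact an entourage of the ambient uniformity. Since a cofinite entourage is by definition an entourage that also happens to be a cofinite equivalence relation, each part splits into two independent checks: the ``cofinite equivalence relation'' part, which is already done, and the ``entourage'' part, which should follow from the axioms of a uniformity.

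For (1)--(4) the entourage part is immediate from the uniformity axioms. In (1), $R_1\cap R_2$ is a cofinite equivalence relation by Lemma~\ref{n:Properties of cofinite equiv rel}(1), and it is an entourage because finite intersections of entourages are entourages. In (2), $S$ is a cofinite equivalence relation by Lemma~\ref{n:Properties of cofinite equiv rel}(2), and since $S$ contains an entourage and any superset of an entourage is an entourage, $S$ is an entourage. In (3), $R_1 R_2$ is a cofinite equivalence relation by Lemma~\ref{n:Properties of cofinite equiv rel}(3); moreover, by Note~\ref{n:composition}, $R_1 R_2=\langle R_1\cup R_2\rangle$ contains both $R_1$ and $R_2$, one of which is an entourage, so $R_1 R_2$ contains an entourage and is therefore itself an entourage. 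In (4), uniform continuity of $f$ implies continuity, so $(f\times f)^{-1}[R]$ is a cofinite equivalence relation by Lemma~\ref{n:Properties of cofinite equiv rel}(4); and the very definition of uniform continuity says that preimages of entourages under $f\times f$ are entourages, so $(f\times f)^{-1}[R]$ is an entourage of $X$.

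The one statement requiring a genuinely new argument is (5), and this is where I expect the main obstacle to lie. Given a co-discrete equivalence relation $R$ on a compact Hausdorff space $X$, compactness already yields via Lemma~\ref{n:Properties of cofinite equiv rel}(6) that $R$ is cofinite; the issue is to show that $R$ belongs to the uniformity. The key fact I would invoke is that a compact Hausdorff space carries a unique uniformity, whose entourages are precisely the neighborhoods of the diagonal $D(X)$ in $X\times X$. Since $R$ is co-discrete, Lemma~\ref{n:open} shows that $R$ is an open subset of $X\times X$, and since $R$ is reflexive it contains $D(X)$; hence $R$ is an open set containing the diagonal, that is, a neighborhood of $D(X)$, and therefore an entourage. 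Combining this with cofiniteness shows that $R$ is a cofinite entourage.

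The delicate point is thus the appeal to the uniqueness of the uniformity on a compact Hausdorff space and its description as the neighborhood filter of the diagonal. This is precisely why the hypothesis is strengthened from ``compact'' (as in Lemma~\ref{n:Properties of cofinite equiv rel}(6)) to ``compact and Hausdorff'' here: Hausdorffness is what guarantees that such a canonical uniformity exists and that every open neighborhood of the diagonal is available as an entourage.
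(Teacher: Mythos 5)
Your proposal is correct and follows exactly the route the paper takes: the paper states this lemma as an immediate consequence of Lemma~\ref{n:Properties of cofinite equiv rel}, leaving implicit precisely the entourage checks you supply (closure of the uniformity filter under finite intersections and supersets, the definition of uniform continuity, and the fact that on a compact Hausdorff space the unique compatible uniformity consists of all neighborhoods of the diagonal). You have simply filled in those details, including the only genuinely nontrivial one, part (5), in the standard intended way.
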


\begin{definition}[Cofinite space]\rm
A {\it cofinite $($uniform$)$ space\/} is a uniform space $X$ whose cofinite entourages form a fundamental system of entourages (i.e., every entourage of $X$ contains a cofinite entourage). 
\end{definition}

\begin{lemma}
For a cofinite space $X$ with a fundamental system of cofinite entourages, say, $I$, the set  $\beta = \{R[x]\mid x\in X, R\in I\}$ forms the basis of the corresponding uniform topology and each $R[x]$ is clopen.
\end{lemma}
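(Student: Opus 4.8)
The plan is to prove the two assertions by leaning on Lemma~\ref{n:open} for the topological structure of each $R[x]$ and on the standard neighborhood-base description of a uniform topology for the basis claim. It is convenient to establish the clopen property first, since it feeds directly into verifying that $\beta$ is a basis.

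Fix $R\in I$ and $x\in X$. Since $R$ is a cofinite entourage, it is in particular a cofinite equivalence relation, hence co-discrete; by Lemma~\ref{n:open} this means $R[a]$ is open in $X$ for every $a\in X$, so $R[x]$ is open. For closedness I would use finiteness of the class set: as $R$ is an equivalence relation, the distinct sets $R[a]$ partition $X$, and there are only finitely many of them, say $R[x], R[x_1],\dots,R[x_n]$. Then $X\setminus R[x]=R[x_1]\cup\cdots\cup R[x_n]$ is a finite union of open sets, hence open, so $R[x]$ is closed. Therefore each $R[x]$ is clopen, and in particular every member of $\beta$ is open.

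It remains to check that $\beta$ is a basis for the corresponding uniform topology. Here I would invoke the standard fact that, for a fundamental system of entourages $I$, the family $\{R[x]\mid R\in I\}$ forms a neighborhood base at each point $x$. Given any open set $O$ and any $x\in O$, there is thus some $R\in I$ with $R[x]\subseteq O$; reflexivity of $R$ gives $x\in R[x]$, so $R[x]\in\beta$ satisfies $x\in R[x]\subseteq O$. Consequently $O=\bigcup_{x\in O}R_x[x]$ for suitable $R_x\in I$, exhibiting $O$ as a union of members of $\beta$. Since moreover every element of $\beta$ is open, $\beta$ is a basis.

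The open/closed bookkeeping is routine; the only point requiring care is the appeal to uniform-space theory, namely that because $I$ is a \emph{fundamental} system of cofinite entourages---every entourage contains a member of $I$---the sections $R[x]$ with $R\in I$ already suffice to generate the neighborhood filter at $x$. I expect this to be the main conceptual step, but it is immediate from the definition of the topology associated to a uniformity together with the defining property of a fundamental system.
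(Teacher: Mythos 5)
Your proof is correct. Note that the paper states this lemma without proof, so there is no argument of the authors' to compare against; your argument is the natural one and uses precisely the tools the paper sets up: a cofinite entourage is co-discrete, so Lemma~\ref{n:open} gives openness of each $R[x]$; the partition of $X$ into the (finitely many) open equivalence classes gives closedness; and the definition of the topology induced by a uniformity, together with the defining property of a fundamental system of entourages, gives the basis claim. One small remark: finiteness of the set of classes is not actually needed for closedness, since the complement of $R[x]$ is a union (finite or not) of open equivalence classes and is therefore open in any case; what finiteness buys is that $X/R$ is a finite discrete space, not the clopenness of the classes.
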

\begin{examples}\rm
1. Let $G$ be a cofinite group, i.e., a Hausdorff topological group in which the set of all open normal subgroups of finite index forms a neighborhood base of the identity $1\in G$. 
Then for each open normal subgroup $N$ of $G$, the subset 
$R_N=\{(a,b)\in G\times G\mid ab^{-1}\in N\}$ is a cofinite equivalence relation on $G$. Furthermore, the set $I=\{R_N\mid\hbox{$N$ is an open normal subgroup of $G$}\}$ is a fundamental system of entourages for a uniformity on $G$ that induces its topology. 
In this way, we view $G$ as a cofinite space. 

2. Let $X$ be a compact Hausdorff totally disconnected space. Then, endowed with the unique uniform structure compatible with its topology, $X$  is a cofinite space.  

3. Let $X$ be any set and let $I$ be a separating filter base of equivalence relations on~$X$, each of which has only finitely many equivalence classes. By this we mean that $I$ is a set of equivalence relations that have only finitely many equivalence classes satisfying the two conditions: 
\begin{enumerate}
\item[(i)] If $R_1,R_2\in I$, then there exists $R_3\in I$ such that $R_3\subseteq R_1\cap R_2$.
\item[(ii)] The intersection of all members of $I$ is the diagonal $D(X)$.
\end{enumerate}

Then $I$ is a fundamental system of entourages for a uniform structure making $X$ into a Hausdorff cofinite space. 
\end{examples}

Cofinite spaces have the following elementary properties:

This following lemma is an analogue to similar works done in ~\cite{bH77}, but in the category of general cofinite spaces.
\begin{lemma} \label{n:Cofinite Properties}
Let $X$ be a cofinite space and let $I$ be a fundamental system of cofinite entourages of $X$. Then the following properties hold:
\begin{enumerate}
\item If $W\subseteq X\times X$, then $\overline W =\bigcap_{R\in I} (R\times R)[W] = \bigcap_{R\in I}RWR$ and each $RWR$ is a clopen neighborhood of $W$ in $X\times X$.
\item If $A\subseteq X$, then $\overline A = \bigcup _{R\in I}R[A]$ and each $R[A]$ is a clopen neighborhood of $A$ in $X$.
\end{enumerate} 
\end{lemma}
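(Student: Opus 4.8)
The plan is to reduce both statements to a single structural observation: for each cofinite entourage $R \in I$, the set $(R\times R)[W]$ is a \emph{finite} union of clopen equivalence classes of $R\times R$, while the sets $R[x]$ form a neighborhood basis of the uniform topology (as recorded in the preceding basis lemma). Once this is in hand, each closure formula falls out by a routine double inclusion.

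First I would record the two relation identities I need. Applying Note \ref{rel cart pro} with $T_1 = T_2 = R$ and with the relation $W$ in place of the note's $R$ gives, on the one hand, $(R\times R)[W] = RWR^{-1} = RWR$ using the symmetry $R^{-1}=R$ of an equivalence relation, which is the equality $(R\times R)[W]=RWR$ asserted in (1); and, on the other hand, the decomposition $(R\times R)[W] = \bigcup\{R[a]\times R[b] \mid (a,b)\in W\}$, which exhibits $(R\times R)[W]$ as a union of sets $R[a]\times R[b]$, each of which is exactly one equivalence class of the product equivalence relation $R\times R$ on $X\times X$.

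Next I would show $RWR$ is a clopen neighborhood of $W$. Since $R$ is a cofinite entourage, $X/R$ is finite discrete, so $R$ has only finitely many classes, each clopen (by Lemma \ref{n:open}, or by the preceding basis lemma). Hence $R\times R$ has only finitely many classes $R[a]\times R[b]$, each a product of two clopen sets and so clopen in $X\times X$. The key point — the step I would be most careful with — is that $(R\times R)[W]$, although a priori an arbitrary union of such classes, is in fact a finite union, because there are only finitely many classes in total; a finite union of clopen sets is clopen. Reflexivity of $R$ gives $W\subseteq (R\times R)[W]$, so this clopen set is a neighborhood of $W$, and the identical argument shows $R[A]=\bigcup_{a\in A}R[a]$ is a clopen neighborhood of $A$ in (2).

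Finally I would prove the set equalities by double inclusion, treating (1) and taking (2) as its one-variable analogue. For $\overline W \subseteq \bigcap_{R\in I}(R\times R)[W]$ I use that each $(R\times R)[W]$ is closed and contains $W$. For the reverse inclusion, let $p=(x,y)$ lie in the intersection; the products $R[x]\times R[y]$ form a neighborhood basis at $p$, which is the one delicate point, since passing from basic neighborhoods of $x$ and of $y$ witnessed by possibly different entourages to a single $R$ requires the filter-base property (i) of $I$ (choosing $R_3\subseteq R_1\cap R_2$). Given any $R\in I$, membership $p\in (R\times R)[W]$ forces some $(a,b)\in W$ with $x\in R[a]$ and $y\in R[b]$, equivalently $a\in R[x]$ and $b\in R[y]$ by symmetry, so the basic neighborhood $R[x]\times R[y]$ meets $W$; as $R$ is arbitrary, $p\in\overline W$. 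The same argument, with $R[x]$ in place of $R[x]\times R[y]$ and using $x\in R[a]\iff a\in R[x]$, yields $\overline A=\bigcap_{R\in I}R[A]$ for (2).
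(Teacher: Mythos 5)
Your proof is correct, and there is in fact no proof of record to compare it against: the paper states this lemma bare, flagging it only as an analogue of results in Hartley's paper, so your argument supplies details the paper omits. The route you take is the natural one given the paper's toolkit, and the two points you single out as delicate are exactly the ones that need care: clopenness of $RWR$ and of $R[A]$ genuinely requires that the union of classes be a \emph{finite} union (an arbitrary union of clopen sets need not be closed), which you get from cofiniteness of $R$; and the reverse inclusion in each closure formula needs the filter-base property of $I$ (choosing $R_3\subseteq R_1\cap R_2$) to see that the sets $R[x]\times R[y]$, respectively $R[x]$, form a neighborhood base at $(x,y)$, respectively at $x$. The identities $(R\times R)[W]=RWR^{-1}=RWR$ and $(R\times R)[W]=\bigcup\{R[a]\times R[b]\mid (a,b)\in W\}$ are correctly extracted from Note~\ref{rel cart pro} together with symmetry of $R$, and the double inclusions go through as you wrote them.

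One discrepancy is worth making explicit rather than silent: part (2) as printed asserts $\overline A=\bigcup_{R\in I}R[A]$, whereas what you prove is $\overline A=\bigcap_{R\in I}R[A]$. You are right and the printed formula is a typo: the union is an open set (being a union of open sets), so it cannot equal $\overline A$ whenever $\overline A$ fails to be open --- for instance $A$ a singleton in any non-discrete Hausdorff cofinite space --- and the paper itself invokes the intersection form later, in property (iii) for cofinite graphs in Section~\ref{s:cofinite}. So your proof establishes the statement that is actually intended and used downstream.
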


 \begin{lemma} Let $X$ be a cofinite space and let $I$ be a fundamental system of cofinite entourages of $X$. Then the following statements are equivalent:
\begin{enumerate}
\item $X$ is Hausdorff;
\item $X$ is totally disconnected;
\item $\bigcap_{R\in I}R = D(X)$;
\end{enumerate}
\end{lemma}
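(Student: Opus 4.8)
The plan is to pin down the connected component of an arbitrary point $x\in X$ explicitly, since this makes the equivalence of (2) and (3) transparent, and then to dispatch the equivalence of (1) and (3) by a direct clopen-separation argument. I would set $E=\bigcap_{R\in I}R$; by Note~\ref{n:intersection} this is again an equivalence relation on $X$, its classes are $E[x]=\bigcap_{R\in I}R[x]$, and each class is closed because by the preceding lemma every $R[x]$ is clopen. Condition (3) is then precisely the statement that $E=D(X)$, i.e.\ that every class $E[x]$ is the singleton $\{x\}$; so the whole lemma amounts to identifying $E[x]$ with the connected component of $x$.

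The heart of the argument, and the step I expect to carry the real content, is the claim that the subspace topology on each class $E[x]$ is \emph{indiscrete}. To see this I would fix $R\in I$; since $E\subseteq R$ we have $E[x]\subseteq R[x]$, so $E[x]$ lies inside a single $R$-class. Consequently any basic open set $R[z]$ (with $R\in I$, $z\in X$) that meets $E[x]$ must contain it: if $w\in R[z]\cap E[x]$ then $R[z]=R[w]=R[x]\supseteq E[x]$. Every open subset of $X$ is a union of such basic sets, so its trace on $E[x]$ is either empty or all of $E[x]$; thus $E[x]$ carries the indiscrete topology and is therefore connected. It follows immediately that $E[x]$ is contained in the connected component $C_x$ of $x$. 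For the reverse inclusion, each $R[x]$ is clopen, so the connected set $C_x$ must lie inside it, whence $C_x\subseteq\bigcap_{R\in I}R[x]=E[x]$. Therefore $C_x=E[x]$ for every $x$, and (2) $\Leftrightarrow$ (3) is then automatic: $X$ is totally disconnected exactly when every $C_x=E[x]$ is a singleton, i.e.\ exactly when $E=D(X)$.

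For (1) $\Leftrightarrow$ (3) I would argue directly from the clopen basis. If $E=D(X)$ and $x\ne y$, choose $R\in I$ with $(x,y)\notin R$; then $R[x]$ and $X\setminus R[x]$ are disjoint open (indeed clopen) sets separating $x$ from $y$, so $X$ is Hausdorff. Conversely, if $X$ is Hausdorff and $(x,y)\in E$ with $x\ne y$, separate $x,y$ by disjoint open sets $U\ni x$ and $V\ni y$; some basic neighbourhood satisfies $R[x]\subseteq U$, but $(x,y)\in E\subseteq R$ forces $y\in R[x]\subseteq U$, contradicting $U\cap V=\varnothing$, so $E=D(X)$. (Alternatively, (1) $\Leftrightarrow$ (3) is the standard fact that a uniform space is Hausdorff iff the intersection of its entourages is the diagonal, combined with the observation that $\bigcap_{R\in I}R$ equals that intersection because $I$ is a fundamental system.)

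Thus the only substantive point is the indiscreteness of the subspace $E[x]$; once the identification $C_x=E[x]$ is in hand, both equivalences reduce to routine bookkeeping with the clopen basis $\{R[x]\}$. I anticipate no obstacle beyond being careful that the basic-open computation uses only $E\subseteq R$ together with the identity $R[z]=R[w]$ whenever $w\in R[z]$.
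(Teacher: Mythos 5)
Your proof is correct, but there is nothing in the paper to compare it against: the paper states this lemma without any proof at all, so your argument stands as a genuinely self-contained justification. Checking it step by step: setting $E=\bigcap_{R\in I}R$, your key claim that the subspace topology on each class $E[x]$ is indiscrete is sound, because every open set of $X$ is a union of basic sets $R[z]$ (the paper's unnumbered lemma on the clopen basis $\{R[x]\mid x\in X,\ R\in I\}$), and any $R[z]$ meeting $E[x]$ satisfies $R[z]=R[w]=R[x]\supseteq E[x]$ for a common point $w$, using only that $R$ is an equivalence relation and $E\subseteq R$. Combined with the clopen-ness of each $R[x]$, this gives the identification $C_x=E[x]$ of connected components with $E$-classes, from which (2) $\Leftrightarrow$ (3) is immediate; your clopen-separation argument for (1) $\Leftrightarrow$ (3) is also correct (and, as you note, is just the standard fact that a uniform space is Hausdorff iff the intersection of its entourages is the diagonal, since a fundamental system has the same intersection as the full filter of entourages). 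It is worth pointing out that your argument proves slightly more than the lemma asks: it shows that in \emph{any} cofinite space the component of $x$ equals $\bigcap_{R\in I}R[x]$, i.e.\ components coincide with quasi-components, with no compactness hypothesis; this parallels, and in this special setting strengthens, the paper's later (also unproved) lemma that in a compact Hausdorff space the component of a point is the intersection of the clopen sets containing it.
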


Next we consider a general process for constructing cofinite spaces, using what is called by~N.~Bourbaki, ~\cite{nB66}, "initial uniformities". 

\begin{definition}
Let $X$ be a set, let $(X_i)_{i\in I}$ be a family of sets, 
and let $F = (f_i\from X \to X_i)_{i\in I}$ be a family of functions for $X$. We call $F$ a separating family of maps if for all $x\neq y$ in $X$, then exists $i\in I$, such that $f_i(x)\neq f_i(y)$ in $X_i$.
\end{definition}
\begin{proposition} \label{p:Initial uniformity}
Let $X$ be a set, let $(X_i)_{i\in I}$ be a family of cofinite spaces, 
and let $(f_i\from X \to X_i)_{i\in I}$ be a family of functions for $X$.
Let $\bf S$ be the set of all equivalence relations on $X$ of the form 
$(f_i\times f_i)^{-1}[R_i]$, where $i\in I$ and $R_i$ runs through a fundamental system of cofinite entourages of $X_i$.
Finally, let $\bf B$ be the set of all finite intersections of members of $\bf S$. 
Then $\bf B$ is a fundamental system of entourages of a uniformity on $X$ which is the coarsest uniformity on $X$ for which all the mappings $f_i$ are uniformly continuous. 
Endowed with this uniform structure, $X$ becomes a cofinite space. Moreover if each $X_i$ is Hausdorff and $(f_i\from X \to X_i)_{i\in I}$ is a separating family of functions for $X$, then $X$ is Hausdorff as well. 
\end{proposition}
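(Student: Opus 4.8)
The plan is to verify the claimed universal property in stages, leaning on the Correspondence Theorem~\ref{correspondence} and the closure properties in Lemma~\ref{n:Properties of cofinite entourages}. First I would check that $\mathbf{S}$ genuinely consists of cofinite equivalence relations on $X$: each $R_i$ is a cofinite entourage of $X_i$, so by part (4) of Lemma~\ref{n:Properties of cofinite entourages} (equivalently, the Correspondence Theorem together with continuity) the preimage $(f_i\times f_i)^{-1}[R_i]$ is a cofinite equivalence relation on $X$. The one subtlety is that the $f_i$ here are merely functions, not yet uniformly continuous maps between uniform spaces, so I would apply the Correspondence Theorem in its set-theoretic form to see that finitely many classes are preserved, and invoke the definition of cofinite equivalence relation directly rather than citing uniform continuity.

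Next I would confirm that $\mathbf{B}$, the finite intersections of members of $\mathbf{S}$, is a fundamental system of entourages for a uniformity. By part (1) of Lemma~\ref{n:Properties of cofinite entourages}, a finite intersection of cofinite equivalence relations is again a cofinite equivalence relation, so every element of $\mathbf{B}$ is a cofinite equivalence relation on $X$; in particular each is symmetric and satisfies $R^2 = R \subseteq R$, which immediately supplies the ``half-size'' and symmetry axioms a base of entourages must meet. The filter-base condition is built into the definition of $\mathbf{B}$: the intersection of two finite intersections is again a finite intersection. Thus $\mathbf{B}$ is a fundamental system of entourages, and since each member is a cofinite equivalence relation, the resulting uniform space $X$ is by definition a cofinite space.

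Then I would establish the two universality claims. For uniform continuity of each $f_i$, I would fix a cofinite entourage $R_i$ of $X_i$ and observe that $(f_i\times f_i)^{-1}[R_i]\in\mathbf{S}\subseteq\mathbf{B}$ is an entourage of $X$, which is exactly the condition for $f_i$ to be uniformly continuous. For coarseness, suppose $\Phi$ is any uniformity on $X$ making every $f_i$ uniformly continuous; then every generator $(f_i\times f_i)^{-1}[R_i]$ lies in $\Phi$, hence so does every finite intersection of generators, so $\mathbf{B}\subseteq\Phi$ and the uniformity generated by $\mathbf{B}$ is contained in $\Phi$.

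Finally, for the Hausdorff statement, I would show $\bigcap_{R\in\mathbf{B}} R = D(X)$, since for a cofinite space this is equivalent to being Hausdorff. Given $x\neq y$ in $X$, the separating hypothesis provides an index $i$ with $f_i(x)\neq f_i(y)$; because $X_i$ is Hausdorff and cofinite, its cofinite entourages intersect in $D(X_i)$, so some $R_i$ omits the pair $(f_i(x),f_i(y))$, whence $(x,y)\notin(f_i\times f_i)^{-1}[R_i]$ and a fortiori $(x,y)$ is excluded from the intersection over all of $\mathbf{B}$. I expect the main obstacle to be bookkeeping at the very first step: pinning down precisely why the preimage of a cofinite entourage under a bare set map is cofinite without prematurely assuming the uniform structure on $X$ that we are in the process of constructing. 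Everything after that is a routine assembly of the entourage axioms and the universal property from the lemmas already in hand.
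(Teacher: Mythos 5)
Your proof is correct. The paper gives no proof of this proposition at all—it is presented as an instance of Bourbaki's ``initial uniformity'' construction—and your verification (the set-theoretic Correspondence Theorem for $\mathbf{S}$, the filter-base/equivalence-relation axioms for $\mathbf{B}$, upward closure of entourages for uniform continuity and coarseness, and the separating-family argument reducing Hausdorffness to $\bigcap_{R\in\mathbf{B}}R=D(X)$) is precisely the routine argument the paper leaves implicit, including your correct handling of the one genuine subtlety: the members of $\mathbf{B}$ are a priori only equivalence relations with finitely many classes, and become cofinite entourages (each class being open) only once the uniformity they generate, and its induced topology, are in place.
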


Here are two corollaries of this construction. Let $X$ be as in Proposition~\ref{p:Initial uniformity}.

\begin{corollary} \label{c:Initial uniformity}
If $h\from Y\to X$ is a mapping from a uniform space $Y$, then $h$ is uniformly continuous if and only if each mapping $f_i\circ h\from Y\to X_i$ is uniformly continuous.
\end{corollary}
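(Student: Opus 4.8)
The plan is to exploit the explicit fundamental system of entourages $\mathbf{B}$ produced in Proposition~\ref{p:Initial uniformity}, together with the standard principle that a map into a uniform space is uniformly continuous as soon as the preimage of each member of a fundamental system of entourages of the target is an entourage of the source. This reduces the whole statement to a short computation with preimages of the defining relations $(f_i\times f_i)^{-1}[R_i]$.

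I would first dispatch the forward implication, which is immediate: by Proposition~\ref{p:Initial uniformity} each $f_i\from X\to X_i$ is uniformly continuous for the initial uniformity on $X$, so if $h$ is uniformly continuous then each composite $f_i\circ h$ is uniformly continuous, being a composition of uniformly continuous maps. For the converse, I would assume each $f_i\circ h$ is uniformly continuous and reduce the problem to checking that $(h\times h)^{-1}[U]$ is an entourage of $Y$ for every $U\in\mathbf{B}$. A typical such $U$ is a finite intersection $U=\bigcap_{k=1}^{n}(f_{i_k}\times f_{i_k})^{-1}[R_{i_k}]$ of members of $\mathbf{S}$, where each $R_{i_k}$ is a cofinite entourage of $X_{i_k}$. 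Since preimages commute with intersections, I would obtain
\[
(h\times h)^{-1}[U]=\bigcap_{k=1}^{n}(h\times h)^{-1}\bigl[(f_{i_k}\times f_{i_k})^{-1}[R_{i_k}]\bigr],
\]
and then apply the composition identity $(f_{i_k}\times f_{i_k})\circ(h\times h)=(f_{i_k}\circ h)\times(f_{i_k}\circ h)$ to rewrite each factor as
\[
(h\times h)^{-1}\bigl[(f_{i_k}\times f_{i_k})^{-1}[R_{i_k}]\bigr]=\bigl((f_{i_k}\circ h)\times(f_{i_k}\circ h)\bigr)^{-1}[R_{i_k}].
\]
In the relational notation of Note~\ref{rel cart pro}, with juxtaposition denoting composition of relations, this is simply the associativity identity $h^{-1}(f_{i_k}^{-1}R_{i_k}f_{i_k})h=(f_{i_k}h)^{-1}R_{i_k}(f_{i_k}h)$, where $f_{i_k}h=f_{i_k}\circ h$. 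Because each $f_{i_k}\circ h$ is uniformly continuous and $R_{i_k}$ is an entourage of $X_{i_k}$, each factor on the right is an entourage of $Y$; and a finite intersection of entourages is again an entourage, since the entourages of $Y$ form a filter. Hence $(h\times h)^{-1}[U]$ is an entourage of $Y$, and $h$ is uniformly continuous.

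I do not expect any essential difficulty here: the statement is the usual universal property characterizing an initial uniform structure, and the work is entirely bookkeeping. The two points that must be handled with a little care are that it is enough to test uniform continuity against the explicit system $\mathbf{B}$ rather than against all entourages of $X$, and that the operation $(h\times h)^{-1}[\,\cdot\,]$ respects both finite intersections and the composition of the defining maps. The latter is the crux, and it is exactly the relational-algebra identity recorded in Note~\ref{rel cart pro} applied to $h$ and the $f_{i_k}$.
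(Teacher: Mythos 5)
Your proof is correct, and it is essentially the argument the paper intends: the paper states this corollary without proof as an immediate consequence of the construction in Proposition~\ref{p:Initial uniformity}, and your verification --- testing uniform continuity against the fundamental system $\bf B$, pulling back each $(f_{i_k}\times f_{i_k})^{-1}[R_{i_k}]$ along $h\times h$ via the identity $(h\times h)^{-1}\bigl[(f_{i_k}\times f_{i_k})^{-1}[R_{i_k}]\bigr]=\bigl((f_{i_k}\circ h)\times(f_{i_k}\circ h)\bigr)^{-1}[R_{i_k}]$, and using that entourages form a filter --- is exactly the standard universal-property argument for initial uniformities that the authors (following Bourbaki) take for granted.
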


\begin{corollary} \label{c:Initial topology}
The topology on $X$ induced by the above uniformity is the coarsest topology for which the $f_i$ are continuous.
\end{corollary}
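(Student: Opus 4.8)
The plan is to prove that the two topologies coincide by establishing containment in both directions. Write $\tau$ for the topology induced by the initial uniformity $\mathbf{B}$ of Proposition~\ref{p:Initial uniformity}, and write $\tau_0$ for the coarsest topology on $X$ making every $f_i$ continuous.

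The inclusion $\tau_0\subseteq\tau$ is immediate. By Proposition~\ref{p:Initial uniformity} each $f_i$ is uniformly continuous for $\mathbf{B}$, and a uniformly continuous map is continuous for the induced topologies; hence each $f_i$ is $\tau$-continuous. Since $\tau_0$ is by definition the coarsest topology rendering all the $f_i$ continuous, it is contained in any topology with this property, in particular in $\tau$.

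For the reverse inclusion $\tau\subseteq\tau_0$, I would exhibit a $\tau_0$-open basis of $\tau$. First note that the members of $\mathbf{B}$ are cofinite entourages: each generator $(f_i\times f_i)^{-1}[R_i]$ is a cofinite entourage of $X$ by Lemma~\ref{n:Properties of cofinite entourages}(4) (applied to the uniformly continuous map $f_i$ and the cofinite entourage $R_i$ of $X_i$), and finite intersections of cofinite entourages are cofinite by Lemma~\ref{n:Properties of cofinite entourages}(1). Hence, by the basis lemma for cofinite spaces, the sets $R[x]$ with $R\in\mathbf{B}$ and $x\in X$ form a basis for $\tau$, and it suffices to check that each such $R[x]$ lies in $\tau_0$. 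Writing a typical $R\in\mathbf{B}$ as a finite intersection $R=\bigcap_{k=1}^{n}(f_{i_k}\times f_{i_k})^{-1}[R_{i_k}]$, the key computation is the identity
$$
R[x]=\bigcap_{k=1}^{n}\bigl(f_{i_k}\times f_{i_k}\bigr)^{-1}[R_{i_k}][x]=\bigcap_{k=1}^{n}f_{i_k}^{-1}\bigl(R_{i_k}[f_{i_k}(x)]\bigr),
$$
which follows by unwinding the definitions, since $y\in(f_{i_k}\times f_{i_k})^{-1}[R_{i_k}][x]$ exactly when $f_{i_k}(y)\in R_{i_k}[f_{i_k}(x)]$. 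Because $R_{i_k}$ is a cofinite entourage of $X_{i_k}$, the set $R_{i_k}[f_{i_k}(x)]$ is open (indeed clopen) in $X_{i_k}$ by the same basis lemma; as $f_{i_k}$ is $\tau_0$-continuous, its preimage $f_{i_k}^{-1}(R_{i_k}[f_{i_k}(x)])$ is $\tau_0$-open. A finite intersection of $\tau_0$-open sets being $\tau_0$-open, we conclude $R[x]\in\tau_0$, whence $\tau\subseteq\tau_0$.

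Combining the two inclusions gives $\tau=\tau_0$. The result is standard in flavour, and the only step requiring genuine care is the middle one: correctly identifying the basic neighborhood $R[x]$ as the finite intersection of preimages $f_{i_k}^{-1}(R_{i_k}[f_{i_k}(x)])$, and then invoking the openness of each $R_{i_k}[f_{i_k}(x)]$ supplied by the basis lemma for cofinite spaces. The remaining verifications are purely formal.
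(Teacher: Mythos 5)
Your proof is correct. The paper offers no argument for Corollary~\ref{c:Initial topology} at all --- it is recorded as a standard consequence of Proposition~\ref{p:Initial uniformity}, deferring to Bourbaki's theory of initial uniformities --- and your two-inclusion verification, resting on the identity $R[x]=\bigcap_{k=1}^{n}f_{i_k}^{-1}\bigl(R_{i_k}[f_{i_k}(x)]\bigr)$ together with the paper's basis lemma (which makes each class $R_{i_k}[f_{i_k}(x)]$ open in $X_{i_k}$ and makes the sets $R[x]$, $R\in\mathbf{B}$, a basis of the induced topology), is exactly the standard argument the paper implicitly relies on.
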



\subsection{Uniform subspaces of cofinite spaces}

Recall that a {\it uniform subspace\/} of a uniform space $X$ is a subset $A$, endowed with the coarsest uniformity for which the inclusion mapping $A\to X$ is uniformly continuous. This uniformity is called the {\it uniformity induced on\/} $A$ by that of $X$. 

In the case of a uniform subspace, Proposition~\ref{p:Initial uniformity}, can be stated as follows. 

\begin{proposition} \label{n:uniform subspace is cofinite}
Let $A$ be a uniform subspace of a cofinite space $X$. Then the family of all sets of the form $R\cap(A\times A)$, where $R$ runs through a fundamental system of cofinite entourages of $X$, is a fundamental system of entourages of $A$. In particular, $A$ is a cofinite space.
\end{proposition}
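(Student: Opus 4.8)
The plan is to realise the induced uniformity on $A$ as the initial uniformity determined by the single inclusion map $j\from A\to X$, and then read off the conclusion from Proposition~\ref{p:Initial uniformity}. By definition, the uniformity induced on $A$ is the coarsest one for which $j$ is uniformly continuous, so it is precisely the initial uniformity attached to the one-element family $F=(j)$. Fixing a fundamental system $\mathcal{I}$ of cofinite entourages of $X$, Proposition~\ref{p:Initial uniformity} then tells us that the finite intersections of the sets $(j\times j)^{-1}[R]$, $R\in\mathcal{I}$, form a fundamental system of entourages of $A$, and that $A$ is a cofinite space. The work is to trim this statement down to the one asserted here.

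The first step is to compute $(j\times j)^{-1}[R]$ explicitly. Using the identity $(f\times f)^{-1}[R]=f^{-1}Rf$ for the inclusion $j$, one gets
$$
(j\times j)^{-1}[R]=\{(a_1,a_2)\in A\times A\mid (a_1,a_2)\in R\}=R\cap(A\times A).
$$
So the generating family $\mathbf{S}$ of Proposition~\ref{p:Initial uniformity} is exactly $\{R\cap(A\times A)\mid R\in\mathcal{I}\}$, matching the family named in the statement.

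Next I would eliminate the passage to finite intersections. Since $\mathcal{I}$ is a fundamental system of entourages it is a filter base: given $R_1,\dots,R_n\in\mathcal{I}$ there is $R\in\mathcal{I}$ with $R\subseteq R_1\cap\dots\cap R_n$ (indeed, by Lemma~\ref{n:Properties of cofinite equiv rel}(1) the intersection is itself cofinite). Because preimages commute with intersections, $(j\times j)^{-1}[R]\subseteq\bigcap_{k}(j\times j)^{-1}[R_k]$, so every member of the family $\mathbf{B}$ of finite intersections contains a member of $\mathbf{S}$. Hence $\mathbf{S}$ is already a fundamental system of entourages of $A$, which is the first assertion.

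Finally, for cofiniteness it suffices to observe that each $R\cap(A\times A)$ is a cofinite entourage of $A$: by Lemma~\ref{n:Properties of cofinite equiv rel}(5) the restriction is a cofinite equivalence relation on the subspace $A$, and it is an entourage of $A$ by the previous paragraph. Thus the cofinite entourages of $A$ include the fundamental system $\mathbf{S}$ and so form a fundamental system themselves, whence $A$ is a cofinite space. The only step that is not a verbatim citation is the collapse of $\mathbf{B}$ to $\mathbf{S}$ in this single-map situation; once the identification $(j\times j)^{-1}[R]=R\cap(A\times A)$ and the filter-base property of $\mathcal{I}$ are in hand, everything else follows directly from Proposition~\ref{p:Initial uniformity} and Lemma~\ref{n:Properties of cofinite equiv rel}.
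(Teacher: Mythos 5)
Your proof is correct and follows essentially the same route as the paper, which presents this proposition precisely as the specialization of Proposition~\ref{p:Initial uniformity} to the single inclusion map $A\to X$. The details you supply --- the identification $(j\times j)^{-1}[R]=R\cap(A\times A)$, the collapse of finite intersections via the filter-base property of the fundamental system, and the cofiniteness of the restrictions via Lemma~\ref{n:Properties of cofinite equiv rel}(5) --- are exactly the routine verifications the paper leaves implicit.
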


By Corollary~\ref{c:Initial topology}, we see that the topology induced on a uniform subspace $A$ of a cofinite space $X$ by its uniformity is the same as the subspace topology on $A$. Recall that the subspace topology on $A$ is the coarsest topology on $A$ such that $i$ is continuous. 
Furthermore, we next observe that restrictions of uniformly continuous maps to uniform subspaces are uniformly continuous.

\begin{proposition} \label{n:restriction to subspace}
Let $f\from X\to Y$ be a uniformly continuous map of cofinite spaces and let $A$, $B$ be uniform subspaces of $X$, $Y$ such that $f(A)\subseteq B$. Then the restriction $f|_A\from A\to B$ is also uniformly continuous.
\end{proposition}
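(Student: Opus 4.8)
The plan is to deduce the uniform continuity of $f|_A$ from the universal property of the subspace uniformity on $B$, which is precisely the initial-uniformity construction of Proposition~\ref{p:Initial uniformity} applied to a single inclusion map. First I would record the two inclusions $i\from A\to X$ and $j\from B\to Y$; by the very definition of the uniformity induced on a uniform subspace, both $i$ and $j$ are uniformly continuous.

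Next I would observe that $B$, viewed as a uniform subspace of $Y$, is the initial uniform space determined by the one-element family consisting of $j\from B\to Y$. Hence Corollary~\ref{c:Initial uniformity}, specialized to this family, asserts that a map $h\from A\to B$ from the uniform space $A$ is uniformly continuous if and only if $j\circ h\from A\to Y$ is uniformly continuous. I would apply this with $h=f|_A$.

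The remaining step is to identify $j\circ f|_A$ with a manifestly uniformly continuous map. Since $f(A)\subseteq B$, the maps $j\circ f|_A$ and $f\circ i$ coincide: both send $a\in A$ to $f(a)\in Y$. As a composition of the uniformly continuous maps $i$ and $f$, the map $f\circ i\from A\to Y$ is uniformly continuous; hence so is $j\circ f|_A$, and the corollary then yields that $f|_A$ is uniformly continuous.

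I expect the only real subtlety to be the correct invocation of Corollary~\ref{c:Initial uniformity}: one must apply it to the \emph{target} subspace $B$ (regarding $B$ as an initial uniform space over the single map $j$), rather than to the source $A$, and then verify the identification $j\circ f|_A=f\circ i$ as maps $A\to Y$. Should a more computational argument be preferred, the same conclusion follows directly: by Proposition~\ref{n:uniform subspace is cofinite} it suffices to pull back a fundamental cofinite entourage $S\cap(B\times B)$ of $B$, and the set-theoretic identity $(f|_A\times f|_A)^{-1}[S\cap(B\times B)]=(f\times f)^{-1}[S]\cap(A\times A)$ (valid because $f(A)\subseteq B$) exhibits this preimage as the restriction to $A\times A$ of the cofinite entourage $(f\times f)^{-1}[S]$ of $X$, which is itself an entourage of $A$ by Proposition~\ref{n:uniform subspace is cofinite}.
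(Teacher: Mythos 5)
Your proposal is correct. Note that the paper itself offers no proof of this proposition --- it is stated as an observation immediately after the discussion of subspace uniformities --- so there is nothing to compare against line by line; but your first argument is exactly the one the paper's surrounding machinery is set up to deliver: since the subspace uniformity on $B$ is by definition the initial (coarsest) uniformity for the single inclusion $j\from B\to Y$, Corollary~\ref{c:Initial uniformity} reduces uniform continuity of $f|_A$ to that of $j\circ f|_A=f\circ i$, which is a composition of uniformly continuous maps. The one subtlety you flag --- that the corollary must be invoked for the \emph{target} subspace $B$, not the source $A$ --- is handled correctly. Your alternative computational argument is also sound: the identity $(f|_A\times f|_A)^{-1}[S\cap(B\times B)]=(f\times f)^{-1}[S]\cap(A\times A)$ holds precisely because $f(A)\subseteq B$ makes the condition $(f(a),f(a'))\in B\times B$ automatic, and then Lemma~\ref{n:Properties of cofinite entourages}(4) together with Proposition~\ref{n:uniform subspace is cofinite} shows this set is an entourage of $A$. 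The second route has the small advantage of being self-contained (it never needs the universal property, only the explicit description of the subspace entourages), and it proves the statement for arbitrary uniform subspaces without any appeal to cofiniteness beyond what the paper's fundamental systems provide; either version would serve as the paper's missing proof.
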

\

\subsection{Products of cofinite spaces}

Recall that if $(X_i)_{i\in I}$ is a family of uniform spaces, then the coarsest uniformity on the Cartesian product 
$$
X=\prod_{i\in I}X_i
$$
for which the projections $\pi_i\from X\to X_i$ are uniformly continuous is called the
{\it product uniformity}. The set $X$ together with its product uniformity is called the  
{\it product uniform space\/} of this family.

In the case of a Cartesian product of cofinite spaces, Proposition~\ref{p:Initial uniformity} yields the follow result. 

\begin{proposition}
If $X$ is the product uniform space of a family $(X_i)_{i\in I}$ of cofinite spaces, then $X$ is a cofinite space.
\end{proposition}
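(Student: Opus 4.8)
The plan is to observe that this proposition is nothing more than the specialization of Proposition~\ref{p:Initial uniformity} to the family of projection maps, so that the argument reduces to matching the two constructions. Recall that the product uniformity on $X = \prod_{i\in I} X_i$ is by definition the coarsest uniformity for which every projection $\pi_i \from X \to X_i$ is uniformly continuous. This is exactly the initial uniformity described in Proposition~\ref{p:Initial uniformity}, taken with respect to the family of functions $(f_i)_{i\in I} = (\pi_i)_{i\in I}$. Since each factor $X_i$ is assumed to be a cofinite space, the hypotheses of that proposition are satisfied verbatim.

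First I would recall the explicit description of the entourages supplied by Proposition~\ref{p:Initial uniformity}: letting $\mathbf{S}$ be the set of equivalence relations $(\pi_i \times \pi_i)^{-1}[R_i]$ with $i\in I$ and $R_i$ ranging over a fundamental system of cofinite entourages of $X_i$, and letting $\mathbf{B}$ be the set of all finite intersections of members of $\mathbf{S}$, the proposition guarantees that $\mathbf{B}$ is a fundamental system of entourages for the coarsest uniformity making all the $\pi_i$ uniformly continuous. By the uniqueness of this coarsest uniformity, it coincides with the product uniformity, so $\mathbf{B}$ is a fundamental system of entourages for the product uniform space $X$.

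Next I would confirm that every member of $\mathbf{B}$ is a cofinite entourage, which is precisely what makes $X$ a cofinite space. Each generator $(\pi_i \times \pi_i)^{-1}[R_i]$ is a cofinite entourage of $X$ by part~(4) of Lemma~\ref{n:Properties of cofinite entourages}, since $\pi_i$ is uniformly continuous and $R_i$ is a cofinite entourage of $X_i$; finite intersections of cofinite entourages remain cofinite by part~(1) of the same lemma. Thus $\mathbf{B}$ is a fundamental system consisting of cofinite entourages, so the cofinite entourages of $X$ themselves form a fundamental system of entourages and $X$ is a cofinite space.

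I do not anticipate any genuine obstacle here; the only point requiring care is the bookkeeping identification of the product uniformity with the initial uniformity of Proposition~\ref{p:Initial uniformity}. Once that identification is made explicit, the conclusion is immediate from the cited lemma, and indeed the whole statement is best regarded as a corollary of Proposition~\ref{p:Initial uniformity} rather than as an independent result.
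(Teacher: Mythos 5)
Your proof is correct and matches the paper's approach exactly: the paper also obtains this proposition by observing that the product uniformity is precisely the initial uniformity of Proposition~\ref{p:Initial uniformity} applied to the projections $\pi_i$, which already concludes that $X$ is a cofinite space. Your extra verification via Lemma~\ref{n:Properties of cofinite entourages} is sound but redundant, since that conclusion is built into Proposition~\ref{p:Initial uniformity} itself.
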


By Corollary~\ref{c:Initial topology}, the topology induced on a product uniform space 
$X=\prod_{i\in I}X_i$ of a family of cofinite spaces is the same as the product topology on $X$ Recall that the product topology on $X$ is the coarsest topology on $X$ such that each projection $\pi_i$, for all $i\in I$ is continuous. In this situation, Corollary~\ref{c:Initial uniformity} says:
if $f$ is a function from a uniform space $Y$ into the product uniform space $X$, then $f$ is uniformly continuous if and only if the coordinate functions $f_i=\pi_i\circ f$ are uniformly continuous.


\subsection{Inverse limits of cofinite spaces} \label{ss:Inverse limits of cofinite spaces}

Let $(X_i,\phi_{ij})$ be an inverse system of sets indexed by a directed set $I$. 
We say that $(X_i,\phi_{ij})$ is an {\it inverse system of uniform spaces\/} if (i)~each $X_i$ is a uniform space, and (ii)~for all $i\le j$, $\phi_{ij}\from X_j\to X_i$ is uniformly continuous.
The set $X=\varprojlim X_i$ endowed with the coarsest uniformity for which the canonical maps $\phi_i\from X\to X_i$ are uniformly continuous is called the {\it inverse limit of the inverse system of uniform spaces}.

Equivalently, the inverse limit of an inverse system of uniform spaces $(X_i,\phi_{ij})$ is the uniform subspace of the product uniform space $\prod_{i\in I}X_i$ consisting of all points $x$ such that
$$
\pi_i(x)=\phi_{ij}(\pi_j(x))
$$
whenever $i\le j$, and $\pi_i$ is the regular projection map. Also the induced topology on the uniform space $X=\varprojlim X_i$ is the same as the inverse limit of the topologies on the $X_i$; 
see~\cite[Chapter II, \S 2, no.~7]{nB66}. 

In the case of an inverse system of cofinite spaces, Proposition~\ref{p:Initial uniformity} can be stated as follows: 

\begin{proposition} \label{p:Inverse limit of cofinite spaces}
Let $(X_i,\phi_{ij})$ be an inverse system of cofinite spaces and let $X=\varprojlim X_i$ be the inverse limit. For each $i\in I$, let $\phi_i\from X\to X_i$ be the canonical map. Then the collection of all sets $(\phi_i\times\phi_i)^{-1}[R_i]$, where $i$ runs through $I$ and $R_i$ runs through a fundamental system of cofinite entourages of $X_i$, is a fundamental system of cofinite entourages of $X$. In particular, $X$ is a cofinite space.
\end{proposition}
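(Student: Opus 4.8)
The plan is to recognize this proposition as the special case of Proposition~\ref{p:Initial uniformity} in which the family of maps is the family of canonical projections $\phi_i \from X \to X_i$. Indeed, by the defining property recalled just above, the inverse limit $X = \varprojlim X_i$ carries precisely the coarsest uniformity for which every $\phi_i$ is uniformly continuous, i.e.\ the initial uniformity determined by the family $(\phi_i)_{i\in I}$. Hence Proposition~\ref{p:Initial uniformity} applies directly and already yields that the set $\mathbf B$ of all finite intersections of members of $\mathbf S = \{(\phi_i\times\phi_i)^{-1}[R_i]\}$ is a fundamental system of cofinite entourages of $X$, and in particular that $X$ is a cofinite space. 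The only thing left to establish is the sharper assertion that one need not pass to finite intersections: the family $\mathbf S$ itself is already a fundamental system of cofinite entourages.

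First I would record the functoriality of preimages along the structure maps: since $\phi_i = \phi_{ij}\circ\phi_j$ whenever $i\le j$, one has
$$
(\phi_i\times\phi_i)^{-1}[R_i] = (\phi_j\times\phi_j)^{-1}\bigl[(\phi_{ij}\times\phi_{ij})^{-1}[R_i]\bigr].
$$
Combined with the fact that each $\phi_{ij}$ is uniformly continuous, Lemma~\ref{n:Properties of cofinite entourages}(4) tells me that $(\phi_{ij}\times\phi_{ij})^{-1}[R_i]$ is a cofinite entourage of $X_j$. Thus every member of $\mathbf S$ indexed by $i$ can be rewritten as the $\phi_j$-preimage of a cofinite entourage of $X_j$, for any chosen $j\ge i$.

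Next I would exploit the directedness of the index set $I$. Given finitely many members $(\phi_{i_1}\times\phi_{i_1})^{-1}[R_{i_1}],\dots,(\phi_{i_n}\times\phi_{i_n})^{-1}[R_{i_n}]$ of $\mathbf S$, I choose an upper bound $j\in I$ with $j\ge i_k$ for all $k$. Using the identity above, each of these sets equals $(\phi_j\times\phi_j)^{-1}[S_k]$ for a suitable cofinite entourage $S_k$ of $X_j$, so their intersection is $(\phi_j\times\phi_j)^{-1}\bigl[\bigcap_{k}S_k\bigr]$. By Lemma~\ref{n:Properties of cofinite entourages}(1) the intersection $\bigcap_k S_k$ is a cofinite entourage of $X_j$, and since the $R_j$ range over a fundamental system of cofinite entourages of $X_j$ I may pick one contained in $\bigcap_k S_k$; its $\phi_j$-preimage is then a single member of $\mathbf S$ contained in the prescribed finite intersection. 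This shows that $\mathbf S$ and $\mathbf B$ generate the same filter, so $\mathbf S$ is itself a fundamental system.

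The step I expect to carry the real content is the directedness argument of the last paragraph, since it is the one place where the inverse-system structure (as opposed to an arbitrary initial uniformity) is genuinely used; everything else reduces to an application of the already-established Proposition~\ref{p:Initial uniformity} together with the pullback and intersection properties of cofinite entourages recorded in Lemma~\ref{n:Properties of cofinite entourages}. One minor point I would verify explicitly is that $\mathbf S$ is nonempty and downward directed as a filter base, which is exactly what the computation above delivers.
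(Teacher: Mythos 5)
Your proposal is correct and follows the same route as the paper: the paper offers no separate proof, presenting the proposition simply as the specialization of Proposition~\ref{p:Initial uniformity} to the family of canonical maps $(\phi_i)_{i\in I}$. Your added directedness argument is a genuine improvement in completeness, since the statement as written asserts that the sets $(\phi_i\times\phi_i)^{-1}[R_i]$ themselves (not their finite intersections) form a fundamental system, and that sharper claim does require exactly the cofinality argument you give --- rewriting $(\phi_i\times\phi_i)^{-1}[R_i]$ as $(\phi_j\times\phi_j)^{-1}[(\phi_{ij}\times\phi_{ij})^{-1}[R_i]]$ for $j\ge i$, taking an upper bound of the indices, and invoking parts (1) and (4) of Lemma~\ref{n:Properties of cofinite entourages} --- which the paper leaves implicit.
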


As in any category, inverse limits of cofinite spaces are characterized by a universal property: 
Let $(X_i,\phi_{ij})$ be an inverse system of cofinite spaces, 
let $Y$ be a cofinite space, and let $(g_i\from Y\to X_i)_{i\in I}$ be a compatible family of uniformly continuous maps. Here {\it compatible\/} means that $\phi_{ij}g_j=g_i$ whenever $i\le j$. 
Denote the inverse limit by $X=\varprojlim X_i$ and denote the canonical maps by $\phi_i\from X\to X_i$. 
Then there is a unique uniformly continuous map $g\from Y\to X$ such that $\phi_i g=g_i$ for all $i\in I$. The map $g$ exists and is unique by the general theory of inverse limits of sets, and it is uniformly continuous by Corollary~\ref{c:Initial uniformity}.


\subsection{Sums of cofinite spaces} \label{ss:Sums of cofinite spaces}

To begin with, let $(X_i)_{i\in I}$ be an arbitrary family of uniform spaces. The {\it uniform sum\/} of this family is the disjoint union $X=\coprod_{i\in I}X_i$ endowed with the uniformity having a fundamental system of entourages consisting of all sets of the form $\bigcup_{i\in I}V_i$, where each $V_i$ is an entourage of $X_i$. 
Note that each $X_i$, when identified with its image in $X$ under the canonical inclusion map,  is a uniform subspace of $X$. For let $i_{X_i}\from X_i\to X$ be the corresponding inclusion map. Now let $U = \bigcup_{i\in I}U_i$ be an entourage over $X$. Let $(x_i,y_i)\in U_i $. Then $(x_i,y_i)\in U$. So $U_i\subseteq (i_{X_i}\times i_{X_i})^{-1}[U]$. Hence $i_{X_i}$ is uniformly continuous. Also, $(x_i,y_i)\in U\cap ((i_{X_i}\times i_{X_i})[X_i\times X_i]) \Leftrightarrow (x_i,y_i)\in U_i$. Hence $U\cap ((i_{X_i}\times i_{X_i})[X_i\times X_i]) = U_i$. 

Conversely, the next lemma gives a criterion for when a partition of a uniform space constitutes a uniform sum decomposition.

\begin{lemma}\label{partition}
Let $X$ be a uniform space and let $(X_i)_{i\in I}$  be a family of uniform subspaces that forms a partition of $X$.  Suppose that whenever $U_i$ is an entourage of $X_i$ for each 
$i\in I$, then $\bigcup_{i\in I}U_i$ is an entourage of $X$.
Then $X=\coprod_{i\in I}X_i$, the uniform sum.
\end{lemma}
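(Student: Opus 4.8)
The plan is to show that the given uniformity on $X$ and the uniform sum uniformity coincide on their common underlying set. Since $(X_i)_{i\in I}$ is a partition of $X$, the underlying set of $\coprod_{i\in I}X_i$ is precisely $X$, with each $X_i\times X_i$ sitting inside $X\times X$ as the ``$(i,i)$-block'' and the cross-blocks $X_i\times X_j$ ($i\neq j$) untouched by any set of the form $\bigcup_i V_i$. Write $\mathcal U$ for the uniformity of $X$ and $\mathcal S$ for the uniform sum uniformity; both are filters on $X\times X$, so to establish $X=\coprod_{i\in I}X_i$ it suffices to prove the two inclusions $\mathcal S\subseteq\mathcal U$ and $\mathcal U\subseteq\mathcal S$.

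For $\mathcal S\subseteq\mathcal U$ I would invoke the hypothesis directly. By the definition of the uniform sum, the sets $\bigcup_{i\in I}V_i$, with each $V_i$ an entourage of $X_i$, form a fundamental system of entourages of $\mathcal S$; the hypothesis of the lemma asserts precisely that each such set is an entourage of $X$, i.e. lies in $\mathcal U$. Since $\mathcal U$ is a filter, hence upward closed, and every member of $\mathcal S$ contains one of these basic sets, it follows that $\mathcal S\subseteq\mathcal U$.

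For the reverse inclusion $\mathcal U\subseteq\mathcal S$ I would not need the hypothesis at all, only that each $X_i$ carries the induced (subspace) uniformity, so that the inclusion $X_i\hookrightarrow X$ is uniformly continuous. Given an entourage $U$ of $X$, set $U_i=U\cap(X_i\times X_i)$ for each $i\in I$; as the trace (preimage under the inclusion) of an entourage of $X$, each $U_i$ is an entourage of $X_i$. Hence $\bigcup_{i\in I}U_i$ is a basic entourage of $\mathcal S$, and the set-theoretic identity
$$\bigcup_{i\in I}U_i=\bigcup_{i\in I}\bigl(U\cap(X_i\times X_i)\bigr)=U\cap\Bigl(\bigcup_{i\in I}(X_i\times X_i)\Bigr)\subseteq U$$
shows that $U$ contains a basic $\mathcal S$-entourage; since $\mathcal S$ is upward closed, $U\in\mathcal S$.

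Combining the two inclusions gives $\mathcal U=\mathcal S$, which is exactly the assertion $X=\coprod_{i\in I}X_i$. The proof is short, and the only genuine input is the hypothesis, which feeds directly into $\mathcal S\subseteq\mathcal U$; the reverse inclusion $\mathcal U\subseteq\mathcal S$ holds for \emph{any} partition of a uniform space into uniform subspaces, since one can always trace an $X$-entourage onto the blocks to produce a smaller sum-entourage. I therefore do not expect a deep obstacle here, only a bookkeeping point to be careful about: verifying that the trace $U\cap(X_i\times X_i)$ really is an entourage of the subspace $X_i$ (the defining property of the induced uniformity) and that the displayed union equals $U\cap\bigcup_i(X_i\times X_i)$ and so is contained in $U$.
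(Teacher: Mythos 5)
Your proposal is correct and follows essentially the same argument as the paper: trace a given entourage $U$ of $X$ onto the blocks to get entourages $U_i=U\cap(X_i\times X_i)$, use the hypothesis to see that $\bigcup_{i\in I}U_i$ is an entourage of $X$ contained in $U$, and conclude that the basic sum-entourages form a fundamental system for the uniformity of $X$. The paper phrases this as a single fundamental-system statement rather than as two filter inclusions, but the mathematical content is identical.
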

\begin{proof}
Let $U$ be an entourage over $X$. Then $U_i = U\cap (X_i\times X_i)$ is an entourage over $X_i, \forall i\in I$. Now $\bigcup_{i\in I}U_i$ is an entourage over $X$ which is contained in $U$. Thus all sets of the form $\bigcup_{i\in I}V_i$, where each $V_i$ is an entourage of $X_i$, forms a fundamental system of entourages for the uniformity over $X$. Hence $X=\coprod_{i\in I}X_i$.
\end{proof} 

As a direct consequence of the above lemma one can claim that 
\begin{corollary}\label{profinite unif sp}
For a compact, Hausdorff topological space $X$ if $(X_i)_{i\in I}$ is a family of open subspaces that forms a partition of $X$ then $X = \coprod_{i\in I}X_i$.
\end{corollary}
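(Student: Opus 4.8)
The plan is to verify the single hypothesis of Lemma~\ref{partition} and then invoke it directly. Since $X$ is compact Hausdorff it carries a unique compatible uniformity, so I regard $X$ as a uniform space and each $X_i$ as a uniform subspace; the family $(X_i)_{i\in I}$ is a partition by assumption. What remains is to show that whenever $U_i$ is an entourage of $X_i$ for every $i\in I$, the union $\bigcup_{i\in I}U_i$ is an entourage of $X$; Lemma~\ref{partition} then yields $X=\coprod_{i\in I}X_i$.

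First I would record two consequences of the hypotheses. Each $X_i$ is open, and its complement $\bigcup_{j\ne i}X_j$ is a union of open sets, hence open, so each $X_i$ is clopen. Moreover the $X_i$ form an open cover of the compact space $X$, and being pairwise disjoint and nonempty they admit no proper subcover; hence $I$ is finite (empty members being discarded at the outset). Now set $R=\bigcup_{i\in I}(X_i\times X_i)$, the equivalence relation whose classes are the blocks $X_i$. Because $I$ is finite and each $X_i$ is clopen, $R[x]=X_{i(x)}$ is open for every $x$, so $R$ is co-discrete with finitely many classes, i.e. a cofinite equivalence relation; since $X$ is compact and Hausdorff, Lemma~\ref{n:Properties of cofinite entourages}(5) shows $R$ is a cofinite entourage of $X$.

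Finally, given entourages $U_i$ of the $X_i$, the subspace uniformity supplies, for each $i$, an entourage $W_i$ of $X$ with $W_i\cap(X_i\times X_i)\subseteq U_i$. I then set $W=\bigcap_{i\in I}W_i$, which is an entourage of $X$ because $I$ is finite. Since the blocks are disjoint, $W\cap R=\bigcup_{i\in I}\bigl(W\cap(X_i\times X_i)\bigr)\subseteq\bigcup_{i\in I}U_i$, and $W\cap R$ is an entourage of $X$ as the intersection of the two entourages $W$ and $R$. Hence $\bigcup_{i\in I}U_i$ contains an entourage of $X$, so it is itself an entourage, and Lemma~\ref{partition} applies to give $X=\coprod_{i\in I}X_i$.

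I expect the main obstacle to be exactly this last verification that $\bigcup_{i\in I}U_i$ is an entourage, since the $U_i$ are prescribed independently on the separate blocks and need not arise from the restriction of a single entourage of $X$. The device that overcomes this is the cofinite entourage $R$ together with the finite intersection $W$, both of which depend essentially on compactness: compactness forces $I$ to be finite, which is what makes $R$ cofinite and $W$ a genuine entourage, while compactness together with the Hausdorff property is precisely what promotes the co-discrete relation $R$ to an entourage through Lemma~\ref{n:Properties of cofinite entourages}(5).
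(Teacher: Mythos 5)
Your proposal is correct and follows the paper's intended route: the paper states this corollary as a direct consequence of Lemma~\ref{partition}, and you verify exactly the hypothesis of that lemma, using compactness (via finiteness of $I$ and Lemma~\ref{n:Properties of cofinite entourages}(5) applied to the co-discrete relation $R=\bigcup_{i\in I}(X_i\times X_i)$) to show that $\bigcup_{i\in I}U_i$ is always an entourage of $X$. The details you supply (clopenness of the blocks, finiteness of the partition, and the entourage $W\cap R$) are a sound way to fill in the verification that the paper leaves implicit.
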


It should be noted that the underlying topological space of a uniform sum $X$ of uniform spaces $(X_i)_{i\in I}$ is the same as the topological sum of the underlying topological spaces of the $X_i$. 

Uniform sums satisfy the following pasting lemma for uniformly continuous maps.

\begin{lemma}\label{uniform continuous maps and sum}
Let $X$ be the uniform sum of a family $(X_i)_{i\in I}$ of uniform spaces.
If $f$ is a function from $X$ to a uniform space $Y$, then $f$ is uniformly continuous if and only if each restriction $f|_{X_i}\from X_i\to Y$ is uniformly continuous.
\end{lemma}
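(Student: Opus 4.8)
The plan is to prove the two implications separately; the forward direction is immediate from the subspace structure, while the reverse direction is where the disjoint-union structure of the uniform sum does the work. For the forward direction, I would recall that each $X_i$, identified with its image under the canonical inclusion $i_{X_i}\from X_i\to X$, is a uniform subspace of $X$, as was verified in the paragraph preceding Lemma~\ref{partition}. In particular $i_{X_i}$ is uniformly continuous, and since $f|_{X_i}=f\circ i_{X_i}$ is a composite of uniformly continuous maps, it is uniformly continuous whenever $f$ is.

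For the reverse direction, I would assume that every restriction $f|_{X_i}\from X_i\to Y$ is uniformly continuous and show that $f$ is uniformly continuous by testing an arbitrary entourage $V$ of $Y$. By hypothesis each set $U_i=(f|_{X_i}\times f|_{X_i})^{-1}[V]$ is an entourage of $X_i$, so by the definition of the fundamental system of entourages of the uniform sum, $\bigcup_{i\in I}U_i$ is an entourage of $X$. The key step is then the one-sided inclusion $\bigcup_{i\in I}U_i\subseteq(f\times f)^{-1}[V]$: any pair $(x,y)\in U_i$ has both coordinates in $X_i$ and satisfies $(f(x),f(y))\in V$, so it lies in $(f\times f)^{-1}[V]$. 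Because $(f\times f)^{-1}[V]$ thus contains an entourage, and any superset of an entourage is again an entourage, $(f\times f)^{-1}[V]$ is itself an entourage of $X$. As $V$ was arbitrary, $f$ is uniformly continuous.

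The subtlety I would be careful about is that $(f\times f)^{-1}[V]$ is in general strictly larger than $\bigcup_{i\in I}U_i$: it also contains ``cross'' pairs $(x,y)$ with $x\in X_i$ and $y\in X_j$ for distinct $i,j$, whenever $f(x)$ and $f(y)$ happen to be $V$-close. The argument makes no attempt to describe or control these cross pairs; it relies only on the one-sided inclusion together with the upward closure of the entourage filter. This is exactly the point where the disjoint-union structure is essential, since it is the definition of the uniform sum that guarantees the ``block-diagonal'' set $\bigcup_{i\in I}U_i$ is an entourage of $X$ to begin with. No step here presents a genuine obstacle, but the reverse implication is the only one that uses the specific uniformity of the sum rather than mere subspace considerations.
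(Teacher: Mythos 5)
Your proof is correct and follows essentially the same route as the paper's: the forward direction via $f|_{X_i}=f\circ i_{X_i}$ with the uniformly continuous inclusions, and the reverse direction by forming $\bigcup_{i\in I}(f|_{X_i}\times f|_{X_i})^{-1}[V]$, which is an entourage of the uniform sum contained in $(f\times f)^{-1}[V]$. Your closing remark about the ``cross'' pairs is a nice clarification of why the one-sided inclusion suffices, but the underlying argument is the same.
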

\begin{proof}
We already have noted that the inclusion maps $i_{X_i}\from X_i\to X$ are uniformly continuous for all $i\in I$.

Now let $f\from X\to Y$ be uniformly continuous. Then $f|_{X_i}\from X_i\to Y$ can be realized as $f\circ i_{X_i}\from X_i\to Y$ and hence is uniformly continuous for all $i\in I$.\\
Conversely, let each restriction $f|_{X_i}\from X_i\to Y$ be uniformly continuous. Then for any entourage $U$ over $Y$, the set $U_i = (f|_{X_i}\times f|_{X_i})^{-1}(U)$ is an entourage over $X_i$ for all $i\in I$. Thus $R = \bigcup_{i\in I}U_i$ is an entourage over $X$. Let $(x,y)\in R $ so that there exists $i\in I$ such that $(x,y)\in U_i$. Now $(f|_{X_i}\times  f|_{X_i})(x,y)\in U$ so that $(f\times f)(x,y)\in U$ which implies that $(x,y)\in (f\times f)^{-1}[U]$. Hence $R\subseteq (f\times f)^{-1}[U]$. Thus $f$ is uniformly continuous. 
\end{proof}

In general, the uniform sum of a family of cofinite spaces may not be a cofinite space. 
However, this is true for {\it finite\/} uniform sums:

\begin{proposition} \label{p:Sums of cofinite spaces}
The uniform sum $X$ of a finite family $(X_i)_{i=1}^n$ of cofinite spaces is a cofinite space. 
\end{proposition}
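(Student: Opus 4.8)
The plan is to verify directly the defining property of a cofinite space, namely that every entourage of $X$ contains a cofinite entourage. By definition of the uniform sum, $X=\coprod_{i=1}^n X_i$ carries the uniformity whose fundamental system of entourages consists of the sets $\bigcup_{i=1}^n V_i$, where each $V_i$ is an entourage of $X_i$; hence it suffices to produce, for each such family $(V_i)$, a cofinite entourage of $X$ contained in $\bigcup_{i=1}^n V_i$. To do this I would use that each $X_i$ is itself a cofinite space: for each $i$ choose a cofinite entourage $R_i$ of $X_i$ with $R_i\subseteq V_i$, and set $R=\bigcup_{i=1}^n R_i$. Then $R$ is an entourage of $X$ (again by the fundamental-system description of the uniform sum) and $R\subseteq\bigcup_{i=1}^n V_i$, so everything reduces to checking that $R$ is a cofinite equivalence relation on $X$.

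The next step is to verify the equivalence-relation axioms, the key observation being that the components $X_i$ are pairwise disjoint, so $R\subseteq\bigcup_{i=1}^n(X_i\times X_i)$ and $R$ relates only points lying in a common $X_i$. Reflexivity and symmetry follow immediately from the corresponding properties of the $R_i$ together with $D(X)=\bigcup_{i=1}^n D(X_i)$. For transitivity, if $(x,y),(y,z)\in R$ then the point $y$ forces both pairs into the same component $X_i$, whence $(x,z)\in R_i^2=R_i\subseteq R$. The same disjointness shows that $R[a]=R_i[a]$ whenever $a\in X_i$, a fact I would use in the final step.

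It remains to check the two conditions making $R$ cofinite. For co-discreteness: given $a\in X_i$ we have $R[a]=R_i[a]$, which is open in $X_i$ since $R_i$ is co-discrete; and because $X_i$ is open (indeed clopen) in the topological sum underlying $X$, the set $R[a]$ is open in $X$ as well, so $R$ is co-discrete. For finiteness of the quotient, the equivalence classes of $R$ are precisely the classes of the various $R_i$, so their total number is $\sum_{i=1}^n |X_i/R_i|$, a finite sum of finite cardinalities and hence finite; thus $X/R$ is a finite discrete space and $R$ is the desired cofinite entourage. The step I expect to be the genuine crux — conceptual rather than computational — is exactly this count $\sum_{i=1}^n |X_i/R_i|$, since finiteness of the index set is indispensable here and is precisely what breaks down in the infinite case alluded to just before the statement.
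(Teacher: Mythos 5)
Your proof is correct and follows essentially the same route the paper takes: the paper states this proposition without proof, but its proof of the graph analogue (uniform sums of finitely many cofinite graphs) uses exactly your argument --- reduce to an entourage of the form $\bigcup_{i=1}^n V_i$, choose cofinite entourages $R_i\subseteq V_i$ in each summand, and observe that $R=\bigcup_{i=1}^n R_i$ is a cofinite entourage contained in it. Your verification of the equivalence-relation axioms, co-discreteness, and the finite count $\sum_{i=1}^n |X_i/R_i|$ simply fills in the details the paper dismisses with ``Clearly,'' and correctly isolates where finiteness of the family is used.
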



\subsection{Quotients of cofinite spaces} \label{ss:Quotients of cofinite spaces}

In general, there is no obvious way to form quotients of uniform spaces. However, there is a nice way to do this in the special case of cofinite spaces.  
First let us recall the correspondence theorem from set theory. 

\begin{note}[Correspondence Theorem]
Let $q\from X\to Y$ be a surjective function and let 
$K=q^{-1}q=\{(x_1,x_2)\in X\times X \mid q(x_1)=q(x_2)\}$. Then there is a one-to-one correspondence between the set of all equivalence relations $R$ on $X$ such that 
$K\subseteq R$ and the set of all equivalence relations on $Y$ given by
$$
R\mapsto (q\times q)[R]=qRq^{-1}.
$$
\end{note}

\begin{definition}[Uniform quotient map]\rm
Let $X$ and $Y$ be cofinite spaces. A map $q\from X\to Y$ is called a {\it uniform quotient map\/} if $q$ is surjective and if for each equivalence relation $R$ on $Y$, $R$ is a cofinite entourage if and only if $(q\times q)^{-1}[R]$ is a cofinite entourage.
\end{definition}

Uniform quotient maps of cofinite spaces satisfy a fundamental property analogous to that of quotient maps of topological spaces.

\begin{proposition} \label{p:Uniform quotient map}
Let $q\from X\to Y$ be a uniform quotient map of cofinite spaces. Then 
\begin{enumerate}
\item $q$ is uniformly continuous; 
\item a function $f$ from $Y$ to a uniform space $Z$ is uniformly continuous if and only if $f\circ q$ is uniformly continuous.
\end{enumerate}
\end{proposition}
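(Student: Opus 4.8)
The plan is to prove the two numbered assertions in turn, establishing (1) first and then using it for both directions of (2).

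For (1) I would verify directly that $q$ is uniformly continuous, by checking that the preimage of every entourage of $Y$ is an entourage of $X$. Given an entourage $V$ of $Y$, I use that $Y$ is a cofinite space to pick a cofinite entourage $R$ of $Y$ with $R\subseteq V$. Since $R$ is in particular an equivalence relation on $Y$ that is a cofinite entourage, the defining property of a uniform quotient map applies and yields that $(q\times q)^{-1}[R]$ is a cofinite entourage of $X$, hence an entourage of $X$. As $(q\times q)^{-1}[R]\subseteq (q\times q)^{-1}[V]$ and the entourages of $X$ are closed upward, $(q\times q)^{-1}[V]$ is an entourage of $X$. This gives (1).

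For the forward implication of (2), if $f$ is uniformly continuous then $f\circ q$ is a composite of uniformly continuous maps, using (1) for $q$; concretely, for an entourage $W$ of $Z$ the set $(f\times f)^{-1}[W]$ is an entourage of $Y$, and then its preimage $(q\times q)^{-1}[(f\times f)^{-1}[W]]$, which equals $((f\circ q)\times(f\circ q))^{-1}[W]$, is an entourage of $X$. The substance is the converse. Assuming $f\circ q$ uniformly continuous, I fix an entourage $W$ of $Z$ and aim to produce a cofinite entourage of $Y$ contained in $(f\times f)^{-1}[W]$; this shows the latter is an entourage of $Y$ and hence that $f$ is uniformly continuous. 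Writing $g=f\circ q$ and $U=(q\times q)^{-1}[(f\times f)^{-1}[W]]=(g\times g)^{-1}[W]$, uniform continuity of $g$ makes $U$ an entourage of $X$, so $U$ contains a cofinite entourage $T$ of $X$.

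The natural candidate for the sought entourage of $Y$ is the pushforward $(q\times q)[T]$: since $T\subseteq U$ one checks at once that $(q\times q)[T]\subseteq (f\times f)^{-1}[W]$. To recognize $(q\times q)[T]$ as a cofinite entourage of $Y$ I would invoke the Correspondence Theorem~\ref{correspondence} together with the defining property of a uniform quotient map; but both require the pushed-forward relation to be a genuine equivalence relation on $Y$, and $(q\times q)[T]$ is transitive precisely when $T$ contains the kernel $K=(q\times q)^{-1}[D(Y)]$ of $q$. I would therefore replace $T$ by the generated equivalence relation $\langle T\cup K\rangle$: by Lemma~\ref{n:Properties of cofinite entourages}(2) this is again a cofinite entourage of $X$, since it contains the cofinite entourage $T$; it contains $K$, so it is saturated for $q$, whence $(q\times q)^{-1}[(q\times q)[\langle T\cup K\rangle]]=\langle T\cup K\rangle$, and by Theorem~\ref{correspondence} its image $(q\times q)[\langle T\cup K\rangle]$ is a cofinite equivalence relation on $Y$. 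The uniform quotient property then promotes this image to a cofinite entourage of $Y$.

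The hard part will be verifying that this enlargement stays inside the target, that is, $\langle T\cup K\rangle\subseteq U$, equivalently that the resulting equivalence relation on $Y$ is still contained in $(f\times f)^{-1}[W]$. This is exactly where transitivity works against us: $U$ need not be transitive, because $W$ is merely an entourage of $Z$ rather than an equivalence relation, so a priori the transitive closure $\langle T\cup K\rangle$ can escape $U$. The structural facts I would marshal are that $K\subseteq K_g=(g\times g)^{-1}[D(Z)]\subseteq U$, so that $g$ is constant on $K$-classes and $U$ is a union of $K\times K$ blocks; hence the $K$-steps in any $\langle T\cup K\rangle$-chain cost nothing at the level of $Z$, and only the $T$-steps move the image. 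Combining this with the triangle inequality in the uniformity of $Z$ and with the finiteness of the number of classes of $T$, I would try to bound the number of $T$-steps needed to connect two points of a single $\langle T\cup K\rangle$-class and match it against composition room chosen in advance inside $W$. Making this count uniform is the delicate point, and it is the step I expect to be the main obstacle; once the containment $\langle T\cup K\rangle\subseteq U$ is secured, the rest is a routine application of the correspondence theorem and the closure properties of cofinite entourages.
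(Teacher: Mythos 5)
Your proofs of part (1) and of the forward half of (2) are correct and complete, and since the paper states this proposition with no proof at all, there is nothing further to compare them against. The genuine gap is in the converse half of (2), and it sits exactly at the step you flag: you never obtain the containment $\langle T\cup K\rangle\subseteq U$, and the counting scheme you sketch for it is irreparably circular. To reserve ``composition room'' you must fix an entourage $W'$ of $Z$ with $(W')^{N}\subseteq W$ \emph{before} choosing a cofinite entourage $T\subseteq (g\times g)^{-1}[W']$, but the bound $N$ you need is the number of $T$-classes, which is only known \emph{after} $T$ has been chosen; and shrinking $W'$ forces $T$ to have more classes (each $T$-class must have $W'$-small image in $Z$), so the count can never be made uniform in the way your outline requires.

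Moreover, this is not a difficulty a cleverer argument could overcome: in the paper's framework, where cofinite spaces are not required to be Hausdorff, the ``if'' direction of (2) is actually false. Take $X=\{0,1\}^{\N}$ with its profinite (hence cofinite) uniformity, let $K$ be the equivalence relation identifying $u01^{\infty}$ with $u10^{\infty}$ for every finite word $u$, and let $g\from X\to\R$ be given by $g(x)=\sum_{i\ge 1}x_i2^{-i}$, which is uniformly continuous and constant on $K$-classes. Every cofinite entourage of $X$ contains a cylinder relation $T_n$ (agreement in the first $n$ coordinates), and $\langle T_n\cup K\rangle=X\times X$: the two binary expansions of each dyadic rational lie in adjacent level-$n$ cylinders and are glued by $K$, and these gluings chain all $2^{n}$ cylinders together. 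Hence the only cofinite entourage of $X$ containing $K$ is $X\times X$, so $Y=X/\!/K$ carries the indiscrete uniformity (still a cofinite space by the paper's definition), and by the paper's own lemma the canonical map $q\from X\to Y$ is a uniform quotient map of cofinite spaces. The induced injection $f\from Y\to\R$ satisfies $f\circ q=g$, which is uniformly continuous, yet $f$ is not: $(f\times f)^{-1}[\{(a,b):|a-b|<1/2\}]$ omits the pair $(q(0^{\infty}),q(1^{\infty}))$ and so cannot be an entourage of $Y$, whose only entourage is $Y\times Y$. So the implication you were asked to prove needs extra hypotheses --- for instance, if $X$ is compact and $Y$ is Hausdorff, then the relations $\langle T\cup K\rangle$ are clopen, decrease with intersection $K$, and a compactness argument does close your gap --- but as the proposition stands, it breaks down at precisely the point you identified as the main obstacle.
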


\begin{corollary}
If $q\from X\to Y$ is a uniform quotient map of cofinite spaces, then a function $f$ from $Y$ to a cofinite space $Z$ is a uniform quotient map if and only if $f\circ q$ is a uniform quotient map.
\end{corollary}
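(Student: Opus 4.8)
The plan is to reduce the statement to a single set-theoretic identity relating the preimage relations of the three maps, and then to chain together the defining biconditionals of the two hypothesised uniform quotient maps.

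First I would record the key identity: for every equivalence relation $T$ on $Z$,
$$((f\circ q)\times(f\circ q))^{-1}[T]=(q\times q)^{-1}\big[(f\times f)^{-1}[T]\big].$$
This is nothing more than the functorial identity $(f\circ q)^{-1}=q^{-1}\circ f^{-1}$ transported to the product maps, and it can be checked at the level of elements in one line, or derived from the relational identities recorded in Section~\ref{s:Preliminaries} via $(SR)^{-1}=R^{-1}S^{-1}$ together with associativity. Throughout I set $S=(f\times f)^{-1}[T]$; by the Correspondence Theorem~\ref{correspondence} this is an equivalence relation on $Y$, so the defining property of the uniform quotient map $q$ applies to it, and likewise $((f\circ q)\times(f\circ q))^{-1}[T]$ is an equivalence relation on $X$.

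For the forward implication, assume $f\from Y\to Z$ is a uniform quotient map. Surjectivity of $f\circ q$ is immediate from surjectivity of $f$ and of $q$. For the entourage condition I would fix $T$ and chain biconditionals: $T$ is a cofinite entourage $\iff S$ is a cofinite entourage on $Y$ (because $f$ is a uniform quotient map) $\iff (q\times q)^{-1}[S]$ is a cofinite entourage on $X$ (because $q$ is a uniform quotient map). By the key identity the last relation is exactly $((f\circ q)\times(f\circ q))^{-1}[T]$, so $f\circ q$ is a uniform quotient map. For the reverse implication, assume $f\circ q$ is a uniform quotient map. Then $f$ is surjective, since $f\circ q$ is onto and factors through $f$ (given $z\in Z$, pick $x$ with $f(q(x))=z$ and take $y=q(x)$). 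Fixing $T$, I run the same chain in the opposite order: $T$ is a cofinite entourage $\iff (q\times q)^{-1}[S]=((f\circ q)\times(f\circ q))^{-1}[T]$ is a cofinite entourage on $X$ (because $f\circ q$ is a uniform quotient map) $\iff S=(f\times f)^{-1}[T]$ is a cofinite entourage on $Y$ (because $q$ is a uniform quotient map, applied to the equivalence relation $S$). Hence $f$ is a uniform quotient map.

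The argument is essentially bookkeeping, so I do not anticipate a deep obstacle; the two points demanding care are (a) the composition identity for the preimage relations, which I would justify once via the relational calculus and then use as a black box, and (b) in the reverse direction, the fact that the quotient property of $q$ is invoked on the particular relation $S=(f\times f)^{-1}[T]$ --- this is legitimate precisely because $S$ is an equivalence relation on $Y$ by the Correspondence Theorem, and because the definition of a uniform quotient map quantifies over \emph{all} equivalence relations on the target. Everything else is an immediate consequence of the two biconditionals built into the definition of a uniform quotient map.
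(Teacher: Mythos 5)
Your proof is correct: the paper states this corollary without proof, and the intended argument is exactly the routine definitional chase you carry out, chaining the biconditionals in the definition of uniform quotient map through the identity $((f\circ q)\times(f\circ q))^{-1}[T]=(q\times q)^{-1}\bigl[(f\times f)^{-1}[T]\bigr]$. You also correctly handle the one point that needs care, namely that $(f\times f)^{-1}[T]$ is an equivalence relation on $Y$ (Theorem~\ref{correspondence}), so the defining property of $q$ may legitimately be applied to it.
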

Now we turn to constructing uniform quotients of a cofinite space. Let $X$ be a cofinite space and let $I$ denote its filter base of cofinite entourages. 
Let an equivalence relation $K$ on $X$ be given and set $I'=\{R\in I\mid K\subseteq R\}$.
Denote the canonical map from $X$ to the set of equivalence classes $X/K$ by $q\from X\to X/K$.
 
By the correspondence theorem, the collection $J=\{(q\times q)[R]\mid R\in I'\}$ is a filter base of equivalence relations on $Y$, each having finitely many equivalence classes. We see that $J$ is a fundamental system of cofinite entourages for a uniformity on the set of equivalence classes $X/K$. 
We call this uniformity the {\it quotient uniformity of $X$ modulo $K$}.

In general, the topology induced by the quotient uniformity of $X$ modulo $K$ is not as fine as the quotient topology on $X/K$. 
For this reason, we write $X/\!/K$ for the set $X/K$ endowed with the quotient uniformity of $X$ modulo $K$ and the topology it induces, reserving the notation $X/K$ for the quotient space (with the quotient topology). 

\begin{definition}[Uniform quotient space] \rm
If $K$ is an equivalence relation on a cofinite space $X$, then $X/\!/K$ is called the {\it uniform quotient space of $X$ modulo~$K$}.
\end{definition}

\begin{lemma}
Let $X$ be a cofinite space and let $K$ be an equivalence relation on $X$. 
Then the canonical map $q\from X\to X/\!/K$ is a uniform quotient map.
\end{lemma}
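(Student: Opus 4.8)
The plan is to check directly that $q\from X\to X/\!/K$ satisfies the two requirements in the definition of a uniform quotient map. Surjectivity is immediate, since $q$ is by construction the canonical map onto the set of equivalence classes $X/K$. So everything reduces to proving that, for an arbitrary equivalence relation $\bar R$ on $Y:=X/\!/K$, the relation $\bar R$ is a cofinite entourage of $Y$ if and only if $(q\times q)^{-1}[\bar R]$ is a cofinite entourage of $X$. Throughout I would use that $I$ is the filter base of all cofinite entourages of $X$, that $I'=\{R\in I\mid K\subseteq R\}$, and that $J=\{(q\times q)[R]\mid R\in I'\}$ is precisely the fundamental system of cofinite entourages defining the uniformity of $Y$.

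First I would record that $q$ is uniformly continuous. It suffices to check that the preimage of each member of the fundamental system $J$ is an entourage of $X$. For $R\in I'$ we have $K\subseteq R$, so by the correspondence theorem $(q\times q)^{-1}[(q\times q)[R]]=R$, which is a cofinite entourage, hence an entourage, of $X$. With uniform continuity in hand, the forward implication is immediate: if $\bar R$ is a cofinite entourage of $Y$, then $(q\times q)^{-1}[\bar R]$ is a cofinite entourage of $X$ by Lemma~\ref{n:Properties of cofinite entourages}(4).

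For the reverse implication, suppose $\bar R$ is an equivalence relation on $Y$ such that $R:=(q\times q)^{-1}[\bar R]$ is a cofinite entourage of $X$. Since $\bar R$ is reflexive we have $D(Y)\subseteq\bar R$, and therefore $K=(q\times q)^{-1}[D(Y)]\subseteq(q\times q)^{-1}[\bar R]=R$. Thus $R$ is a cofinite entourage of $X$ that contains $K$, i.e.\ $R\in I'$, so $(q\times q)[R]\in J$. Because $q$ is surjective, so is $q\times q$, whence $(q\times q)[(q\times q)^{-1}[\bar R]]=\bar R$; that is, $\bar R=(q\times q)[R]\in J$. As every member of $J$ is a cofinite entourage of $Y$, we conclude that $\bar R$ is a cofinite entourage of $Y$, completing the equivalence and the proof.

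The step I expect to require the most care is this last one, where one must be sure that $R=(q\times q)^{-1}[\bar R]$ lands exactly in $I'$: this hinges on reading $I$ as the collection of all cofinite entourages (so that $I'$ contains every cofinite entourage above $K$, which is legitimate since finite intersections of cofinite entourages are cofinite by Lemma~\ref{n:Properties of cofinite entourages}(1)), together with the two set-theoretic identities $(q\times q)^{-1}[(q\times q)[R]]=R$ for $K\subseteq R$ and $(q\times q)[(q\times q)^{-1}[\bar R]]=\bar R$ for surjective $q$. Everything else is a routine application of the construction of $X/\!/K$ and the elementary properties of cofinite entourages.
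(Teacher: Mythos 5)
Your proof is correct and follows essentially the same route as the paper: surjectivity is immediate, and both directions of the equivalence rest on the identities $(q\times q)^{-1}[(q\times q)[R]]=R$ for $K\subseteq R$ and $(q\times q)[(q\times q)^{-1}[T]]=T$ for surjective $q$, together with the fact that $J=\{(q\times q)[R]\mid R\in I'\}$ is by construction a fundamental system of cofinite entourages of $X/\!/K$. The only organizational difference is in the forward direction, where the paper observes directly that the preimage of a cofinite entourage $S$ of $X/\!/K$ contains some $R\in I'$ and applies Lemma~\ref{n:Properties of cofinite entourages}(2), while you first prove $q$ is uniformly continuous and then invoke Lemma~\ref{n:Properties of cofinite entourages}(4); the two arguments are interchangeable.
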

\begin{proof}
It is obvious that $q$ is surjective. Now let $S$ be a cofinite entourage over $X/\!/K $ so $ (q\times q) [R]\subseteq S$ for some cofinite entourage $R$ over $X$, that contains $K$. So $ R\subseteq (q\times q)^{-1}[(q\times q)[R]]\subseteq (q\times q)^{-1}[S] $. Hence $ (q\times q)^{-1}[S]$ is a cofinite entourage over $X$. Now let $(q\times q)^{-1}[T]$ is a cofinite entourage over $X$, for some equivalence relation $T$ over $X/\!/K$. Note that $(x,y)\in K$ implies that $q(x) = q(y)$ so $(q(x),q(y))\in T$. So $(x,y)\in (q\times q)^{-1}[T]$. So $K\subseteq (q\times q)^{-1}[T]$. Then $(q\times q)[(q\times q)^{-1}[T]]  = T$ is a cofinite entourage over $X/\!/K$. Hence $q$ is a uniform quotient map.
\end{proof}   

\begin{proposition}
Let $f\from X\to Y$ be a uniform quotient map of cofinite spaces and let $K=f^{-1}f$. 
Then there is an isomorphism of uniform spaces $X/\!/K\to Y$ given by $K[x]\mapsto f(x)$.
$$\begindc{\commdiag}[40]
\obj(0,0)[K]{$X/\!/K$}
\obj(15,10)[Y]{$Y$}
\obj(0,10)[X]{$X$}
\mor{X}{K}{$q$}[\atright,\solidarrow]
\mor{X}{Y}{$f$}
\mor{K}{Y}{}[\atleft,\dashArrow]
\enddc$$
\end{proposition}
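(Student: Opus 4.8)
The plan is to exhibit the well-defined bijection $\bar f\from X/\!/K\to Y$ determined by $K[x]\mapsto f(x)$ and to prove it is an isomorphism of uniform spaces, i.e.\ that both $\bar f$ and $\bar f^{-1}$ are uniformly continuous. First I would record that $\bar f$ is a well-defined bijection satisfying $\bar f\circ q=f$, where $q\from X\to X/\!/K$ is the canonical map. Well-definedness and injectivity both follow from $K=f^{-1}f$, since $K[x]=K[x']$ iff $(x,x')\in K$ iff $f(x)=f(x')$; surjectivity of $\bar f$ is immediate from that of $f$. For uniform continuity of $\bar f$ I would use the fact, established in the preceding lemma, that $q\from X\to X/\!/K$ is itself a uniform quotient map, and apply Proposition~\ref{p:Uniform quotient map}(2): since $\bar f\circ q=f$ is uniformly continuous, so is $\bar f$.

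The substantive step is the uniform continuity of $\bar f^{-1}$. As $\bar f$ is a bijection, it suffices to show that $\bar f$ carries a fundamental system of cofinite entourages of $X/\!/K$ to entourages of $Y$, i.e.\ that $(\bar f\times\bar f)[S']$ is an entourage of $Y$ for each member $S'$ of the system $J=\{(q\times q)[R]\mid R\in I'\}$, where $R$ ranges over the cofinite entourages of $X$ containing $K$. For such $S'=(q\times q)[R]$ I would compute, using $\bar f\circ q=f$,
$$
(\bar f\times\bar f)[S']=(\bar f\times\bar f)\big[(q\times q)[R]\big]=(f\times f)[R],
$$
so the problem reduces to showing $(f\times f)[R]$ is a cofinite entourage of $Y$.

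By the Correspondence Theorem (Theorem~\ref{correspondence}), $(f\times f)[R]$ is at least an equivalence relation on $Y$ with finitely many classes. To promote it to a cofinite \emph{entourage}, I would read the defining property of the uniform quotient map $f$ in the ``only if'' direction: $(f\times f)[R]$ is a cofinite entourage of $Y$ precisely when $(f\times f)^{-1}\big[(f\times f)[R]\big]$ is a cofinite entourage of $X$. The key identity here is
$$
(f\times f)^{-1}\big[(f\times f)[R]\big]=KRK=R,
$$
which holds because $K\subseteq R$, whence $R=D(X)RD(X)\subseteq KRK\subseteq RRR=R$. Since $R$ is a cofinite entourage of $X$, the quotient-map property then yields that $(f\times f)[R]$ is a cofinite entourage of $Y$, and $\bar f^{-1}$ is uniformly continuous.

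I expect this last step to be the main obstacle: it rests on the identity $(f\times f)^{-1}(f\times f)[R]=R$ for $R\supseteq K$ together with the correct directional reading of the uniform-quotient-map condition. Once it is in place, $\bar f$ is a uniformly continuous bijection with uniformly continuous inverse, hence an isomorphism of uniform spaces. \emph{Alternatively}, one could observe that both $f=\bar f\circ q$ and $q$ are uniform quotient maps, invoke the corollary to Proposition~\ref{p:Uniform quotient map} to conclude that $\bar f$ is a uniform quotient map, and then note that a bijective uniform quotient map is automatically an isomorphism by the same computation with $K_{\bar f}=D(X/\!/K)$.
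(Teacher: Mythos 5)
Your proposal is correct and takes essentially the same route as the paper: both construct the well-defined bijection $\bar f$ with $\bar f\circ q=f$ from $K=f^{-1}f$, and both establish uniform continuity of the inverse by pushing a cofinite entourage $R\supseteq K$ of $X$ forward to $(f\times f)[R]$ and invoking the uniform-quotient-map property of $f$. The only differences are minor: you derive uniform continuity of $\bar f$ abstractly from the fact that $q$ is a uniform quotient map together with Proposition~\ref{p:Uniform quotient map}(2), where the paper runs a direct entourage computation, and you spell out the identity $(f\times f)^{-1}\bigl[(f\times f)[R]\bigr]=KRK=R$, which the paper leaves implicit when it asserts that $(f\times f)[T]$ is a cofinite entourage of $Y$.
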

\begin{proof}
Let us define $\theta\from X/\!/K\to Y$ via $\theta(K[x]) = f(x)$. Notice that $ K[x] = K[y]\Leftrightarrow (x,y)\in K\Leftrightarrow f(x) = f(y)$. Hence $\theta$ is well defined and an injection. Now let $y\in Y$. Since $f$ is a surjection, there exists $x\in X$ such that $f(x) = y$. Then $\theta(K[x]) = f(x) = y$. Thus $\theta$ is surjection as well. Now let $R$ be a cofinite entourage over $Y$. Then $(f\times f)^{-1}[R]$ is an cofinite entourage over $X$ containing $K$.Thus we claim $(q\times q)[(f\times f)^{-1}[R]]$ is a cofinite entourage over $X/\!/K$. Let $(K[x],K[y])\in(q\times q)[(f\times f)^{-1}[R]]$. Then we get $(p,r)$ in $(f\times f)^{-1}[R]$ such that $K[x]=K[p]$ and $K[y]=K[r]$ which implies that $(\theta(K[x]),\theta(K[y])) = (\theta(K[p]),\theta(K[r])) = (f(p),f(r))\in R$. This shows that $(\theta\times \theta)[(q\times q)[(f\times f)^{-1}[R]]]\subseteq R$ and thus $(q\times q)[(f\times f)^{-1}[R]]$ is a subset of $(\theta\times \theta)^{-1}[R]$. Hence $\theta$ is uniformly continuous.

Now let $S$ be a cofinite entourage over $X/\!/K$. Then there exists $T$ a cofinite entourage over $X$, containing $K$ such that $(q\times q)[T]\subseteq S$. But then $(f\times f)[T]$ is a cofinite entourage over $Y$. Moreover we have $(f\times f)[T] = (\theta\times\theta)[(q\times q)[T]]\subseteq (\theta\times\theta)[S] = (\theta^{-1}\times\theta^{-1})[S]$. Hence $\theta^{-1}$ is uniformly continuous as well.  Thus our claim follows. 
\end{proof}

It should be noted that, although a uniform quotient space $X/\!/K$ has a fundamental system of entourages consisting of cofinite entourages, it may not be Hausdorff, even if $X$ is a cofinite Hausdorff space. We give the following answer to the question as to when $X/\!/K$ is a Hausdorff cofinite space. 

\begin{proposition} \label{p:Hausdorff quotient}
Let $X$ be a cofinite space and let $I$ be the filter base of cofinite entourages of $X$. If $K$ is any equivalence relation on $X$, then the following conditions are equivalent:
\begin{enumerate}
\item $X/\!/K$ is a Hausdorff cofinite space;
\item $\bigcap\{R\mid 
\hbox{$R\in I$ and $K\subseteq R$}\}=K$. 
\end{enumerate}
\end{proposition}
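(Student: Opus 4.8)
The plan is to reduce condition (1) to the bare statement that $X/\!/K$ is Hausdorff. Indeed, by the construction of the quotient uniformity, $X/\!/K$ already carries the fundamental system of cofinite entourages $J=\{(q\times q)[R]\mid R\in I'\}$, where $I'=\{R\in I\mid K\subseteq R\}$ and $q\from X\to X/K$ is the canonical map; hence $X/\!/K$ is automatically a cofinite space and the only genuine content in (1) is Hausdorffness. I would then invoke the earlier lemma characterizing Hausdorffness of a cofinite space by the condition that the intersection of a fundamental system of cofinite entourages equals the diagonal. Applied to $J$, it reads: $X/\!/K$ is Hausdorff if and only if $\bigcap_{R\in I'}(q\times q)[R]=D(X/K)$. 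The whole problem thus becomes transporting this identity in $X/K$ across $q$ into the identity $\bigcap_{R\in I'}R=K$ in $X$.

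The bridge is a short list of set-theoretic identities for $q\times q$, all flowing from the correspondence theorem together with surjectivity of $q$. First, since preimages commute with arbitrary intersections and since $(q\times q)^{-1}[(q\times q)[R]]=R$ for every $R\supseteq K=q^{-1}q$, I would obtain
$$
(q\times q)^{-1}\Big[\bigcap_{R\in I'}(q\times q)[R]\Big]
=\bigcap_{R\in I'}(q\times q)^{-1}[(q\times q)[R]]
=\bigcap_{R\in I'}R.
$$
Second, $(q\times q)^{-1}[D(X/K)]=K$, directly from $K=q^{-1}q$. Third, for the surjection $q$ (hence the surjection $q\times q$) one has $(q\times q)[(q\times q)^{-1}[M]]=M$ for any $M\subseteq (X/K)\times(X/K)$, and in particular $(q\times q)[K]=D(X/K)$.

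With these in hand both implications are immediate. For (1)$\Rightarrow$(2), apply $(q\times q)^{-1}$ to $\bigcap_{R\in I'}(q\times q)[R]=D(X/K)$ and use the first and second identities to get $\bigcap_{R\in I'}R=(q\times q)^{-1}[D(X/K)]=K$. For (2)$\Rightarrow$(1), set $M=\bigcap_{R\in I'}(q\times q)[R]$; the first identity gives $(q\times q)^{-1}[M]=\bigcap_{R\in I'}R=K$, and then applying the surjectivity identity followed by $(q\times q)[K]=D(X/K)$ yields $M=(q\times q)[(q\times q)^{-1}[M]]=(q\times q)[K]=D(X/K)$, which is exactly the diagonal condition equivalent to Hausdorffness.

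I do not anticipate a serious obstacle here; the argument is essentially a transport of the diagonal condition across $q$. The only points needing care are the repeated use of $K=q^{-1}q$ to justify $(q\times q)^{-1}[(q\times q)[R]]=R$ (valid precisely because every $R\in I'$ contains $K$) and the surjectivity of $q$ to justify $(q\times q)[(q\times q)^{-1}[M]]=M$. Both are exactly the hypotheses packaged in the correspondence theorem, so once those identities are recorded, the equivalence of (1) and (2) falls out directly.
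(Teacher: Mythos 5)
Your proof is correct and follows essentially the same route as the paper's: both reduce condition (1) to the criterion that the intersection of the fundamental system $J=\{(q\times q)[R]\mid R\in I'\}$ equals the diagonal of $X/\!/K$, and then transport that identity across $q$ using the fact that $(q\times q)^{-1}[(q\times q)[R]]=R$ whenever $K\subseteq R$. The only difference is presentational: the paper verifies that key identity by element-chasing (the composition $(x,t)\in K\subseteq R$, $(t,s)\in R$, $(s,y)\in K\subseteq R$ gives $(x,y)\in R$), whereas you invoke it wholesale as part of the correspondence theorem.
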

\begin{proof}
(1) $\Rightarrow$ (2): 

Let $X/\!/K$ be Hausdorff. Since $K\subseteq R$ for all $R$ in $I$, we obtain $K\subseteq \bigcap \{R\mid R\in I$ and $K\subseteq R\}$. Now let $(x,y)\in \bigcap \{R\mid R\in I$ and $K\subseteq R\}$. This implies $(q(x),q(y))\in (q\times q)[R]$, for all $R\in I$ whenever $R$ contains $K$. But $X/\!/K$ is Hausdorff so we conclude that $q(x)=q(y)$. Thus $K[x]=K[y]$. Hence $(x,y)\in K$. So, 
$$
\bigcap \{R\mid R\in I, K\subseteq R\} = K
$$

(2) $\Rightarrow$(1):

Let us now take $\bigcap \{R\mid R\in I$ and $K\subseteq R\} = K$. Now if $K[x]\neq K[y]$ in $X/\!/K$, we have $(x,y)\notin K$. Hence there exists some $R\in I$ containing $K$ such that $(x,y)\notin R$. But then $(q(x),q(y)) = (K[x],K[y])$ does not belong to $(q\times q)[R]$. Otherwise $\exists(t,s)\in R$ so that $q(t) = q(x)$ and $q(y) = q(s)$. Then $(x,t)\in K\subseteq R, (t,s)\in R, (s,y)\in K\subseteq R$, which implies $(x,y)\in R$, a contradiction. Hence $X/\!/K$ is a Hausdorff cofinite space.
\end{proof}
Note that we do not even require $X$ to be Hausdorff in the above cases.

In some important special cases, the uniform quotient space of a cofinite space $X$ modulo an equivalence relation $K$ is equal to its quotient space $X/K$ (as topological spaces). To give a necessary and sufficient condition for this to hold, we first make some general observations about quotients of topological spaces. 

Let $K$ be an equivalence relation on a topological space $X$ and denote the canonical quotient map by $q\from X\to X/K$. 
We say that a subset $B\subseteq X$ is {\it $K$-saturated\/} if $K[B]=B$. 
It is easy to check that the intersection of any family of $K$-saturated subsets is again $K$-saturated.

Let $\{B_{\lambda}\mid\lambda\in\Lambda\}$ be a family of $K$-saturated subsets of $X$. Then for all $\lambda$ in $\Lambda, K[B_{\lambda}] = B_{\lambda}$. Let $B = \bigcap_{\lambda\in\Lambda}B_{\lambda}$ and $x\in K[B]$. Then there exists some $b$ in $B$ such that $x\in K[b]$. Hence $x\in K[b]\subseteq K[B_{\lambda}] = B_{\lambda} $, for all $\lambda\in\Lambda$. Thus $x\in\bigcap_{\lambda\in\Lambda}B_{\lambda} = B$. So $K[B]\subseteq B\subseteq K[B]$. Thus $K[B] = B$.
     
Hence, for any subset $A$ of $X$, there is a unique smallest $K$-saturated closed subset ${\overline A}^s$ of $X$ with 
$A\subseteq{\overline A}^s$; simply let ${\overline A}^s$ be the intersection of the family of all closed $K$-saturated subsets of $X$ that contain $A$.

\begin{lemma}\label{p:q(A)}
For any subset $A$ of $X$, we have $q({\overline A}^s)=\overline{q (A)}$.
\end{lemma}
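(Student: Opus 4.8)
The plan is to pass to preimages under $q$ and exploit the defining property of the quotient topology, namely that a subset $C\subseteq X/K$ is closed if and only if $q^{-1}(C)$ is closed in $X$, together with the observation that $q^{-1}(B)$ is automatically $K$-saturated for any $B\subseteq X/K$. The whole statement then reduces to the single identity $q^{-1}(\overline{q(A)})=\overline{A}^s$, after which one applies $q$ and uses surjectivity. I will organize the argument as a double inclusion.

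First I would prove $q(\overline{A}^s)\subseteq\overline{q(A)}$. Since $\overline{q(A)}$ is closed in $X/K$ and $q$ is continuous, $q^{-1}(\overline{q(A)})$ is a closed subset of $X$; it is $K$-saturated because it is a preimage under $q$, and it contains $A$ because $A\subseteq q^{-1}(q(A))\subseteq q^{-1}(\overline{q(A)})$. As $\overline{A}^s$ is by definition the smallest closed $K$-saturated set containing $A$, we get $\overline{A}^s\subseteq q^{-1}(\overline{q(A)})$, and applying $q$ yields $q(\overline{A}^s)\subseteq q(q^{-1}(\overline{q(A)}))\subseteq\overline{q(A)}$.

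For the reverse inclusion, I would verify that $q(\overline{A}^s)$ is itself closed in $X/K$. By the quotient criterion this amounts to checking that $q^{-1}(q(\overline{A}^s))$ is closed in $X$; but for a $K$-saturated set one has $q^{-1}(q(\overline{A}^s))=K[\overline{A}^s]=\overline{A}^s$, which is closed by construction. Since $q(\overline{A}^s)$ is then a closed set containing $q(A)$, minimality of the closure gives $\overline{q(A)}\subseteq q(\overline{A}^s)$. Combining the two inclusions yields the equality.

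The routine facts I lean on are the general set-theoretic relations $A\subseteq q^{-1}(q(A))$ and $q(q^{-1}(C))\subseteq C$, and the equality $q^{-1}(q(B))=K[B]$ for $K$-saturated $B$; none of these is delicate, so I do not anticipate a serious obstacle. The one place demanding care is the reverse inclusion, where the argument hinges on the equivalence between being $K$-saturated and being a preimage: the identity $q^{-1}(q(\overline{A}^s))=\overline{A}^s$ is exactly what lets me transport closedness from $X$ up to $X/K$, and it is precisely here that the hypothesis that $\overline{A}^s$ is $K$-saturated (and not merely closed) is indispensable.
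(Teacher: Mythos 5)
Your proof is correct and takes essentially the same route as the paper's: both rest on the same two facts, namely that $q^{-1}(q(\overline{A}^s))=K[\overline{A}^s]=\overline{A}^s$ by saturation (so $q(\overline{A}^s)$ is closed in the quotient topology) and that $q^{-1}(\overline{q(A)})$ is a closed, $K$-saturated set containing $A$ (so it contains $\overline{A}^s$ by minimality). The only difference is cosmetic: you establish the two inclusions in the opposite order, and you shortcut the paper's chain $A\subseteq\overline{A}\subseteq q^{-1}(q(\overline{A}))\subseteq q^{-1}(\overline{q(A)})$ to the simpler $A\subseteq q^{-1}(q(A))\subseteq q^{-1}(\overline{q(A)})$.
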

\begin{proof}
Let us first see that $q^{-1}(q(\overline{A}^s)) = K[\overline{A}^s] = \overline A^s$.

Now $x\in q^{-1}(q(\overline{A}^s)) \Leftrightarrow q(x)\in q(\overline{A}^s) \Leftrightarrow$ there exists $t\in\overline{A}^s$ such that $q(t) = q(x) \Leftrightarrow (t,x)\in K \Leftrightarrow x\in K[t]\subseteq K[\overline{A}^s]$. So $q^{-1}(q(\overline{A}^s)) = \overline{A}^s$ and hence $q(\overline{A}^s)$ is closed in $X/K$. Then $A\subseteq\overline{A}^s$ implies $q(A)\subseteq q(\overline{A}^s)$ and thus $\overline{q(A)}\subseteq\overline{q(\overline{A}^s)} = q(\overline{A}^s)$.

Since $\overline{q(A)}$ is closed in $X/K, q^{-1}(\overline{q(A)})$ is closed in $X$. Clearly, $A\subseteq\overline{A}\subseteq q^{-1}(q(\overline{A}))\subseteq q^{-1}(\overline{q(A)})$. Now let $x\in K[q^{-1}(\overline{q(A)})]$. This implies that there exists $t\in q^{-1}(\overline{q(A)})$ such that $x\in K[t]$. Then $(t,x)\in K$, where $q(t)\in \overline{q(A)}$, so $ q(x) = q(t)\in\overline{q(A)} $ and thus $ x\in q^{-1}(\overline{q(A)})$. Hence $K[q^{-1}(\overline{q(A)})]\subseteq q^{-1}(\overline{q(A)}) \subseteq K[q^{-1}(\overline{q(A)})]$. So $K[q^{-1}(\overline{q(A)})] = q^{-1}(\overline{q(A)})$. Thus $q^{-1}(\overline{q(A)})$ is a $K$- saturated closed subset of $X$ containing $A$ and hence $\overline{A}^s\subseteq q^{-1}(\overline{q(A)})$. Hence we get $q(\overline{A}^s)\subseteq q(q^{-1}(\overline{q(A)})) = \overline{q(A)}$. Hence our claim $q(\overline{A}^s) = \overline{q(A)}$.       
\end{proof}   
 
\begin{theorem}\label{p:iff}
Let $X$ be a cofinite space and let $K$ be an equivalence relation on $X$. 
\begin{enumerate}
\item The identity map $\id\from X/K\to X/\!/K$ is a continuous bijection.
\item The identity map $\id\from X/K\to X/\!/K$ is a homeomorphism $($i.e., the topology induced by the quotient uniformity of $X$ modulo $K$ and the quotient topology are the same$)$ if and only if $K$ satisfies the property: for each subset $A\subseteq X$, 
the $K$-saturated closure 
${\overline A}^s=\bigcap R[A]$, as $R$ runs through all cofinite entourages of $X$ such that $K\subseteq R$.
\end{enumerate}
\end{theorem}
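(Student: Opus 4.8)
The plan is to build a small toolbox and then dispatch the two parts in turn. Write $q\from X\to X/K$ for the canonical map, let $I$ be the filter base of cofinite entourages of $X$, and set $I'=\{R\in I\mid K\subseteq R\}$, so that $J=\{(q\times q)[R]\mid R\in I'\}$ is the fundamental system of cofinite entourages defining $X/\!/K$. Because each $R\in I'$ is an equivalence relation containing $K$, one has the absorption identities $KR=R=RK$ (from $KR\subseteq R^2=R=DR\subseteq KR$, and symmetrically). The one computation driving everything is the identity
$$
(q\times q)[R]\,[q(A)]=q(R[A])\qquad(R\in I',\ A\subseteq X),
$$
which I would obtain by unwinding the definition of $(q\times q)[R]$ and using $K\subseteq R$ to see that $(q(x),q(z))\in(q\times q)[R]$ holds precisely when $(x,z)\in R$; the special case $A=\{x\}$ reads $(q\times q)[R]\,[K[x]]=q(R[x])$.

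For part (1), the map is a bijection since $X/K$ and $X/\!/K$ have the same underlying set, and for continuity it suffices to see that each basic uniform-open set is quotient-open. Such a set has the form $S[K[x]]$ with $S=(q\times q)[R]$, $R\in I'$; by the computation it equals $q(R[x])$, whose $q$-preimage is $q^{-1}(q(R[x]))=K[R[x]]=(KR)[x]=R[x]$, a clopen subset of $X$ since the cofinite entourage $R$ is co-discrete. This proves (1) and shows the quotient topology $\tau_q$ is finer than the uniform topology $\tau_u$. Consequently $\id\from X/K\to X/\!/K$ is a homeomorphism if and only if $\tau_q=\tau_u$, equivalently if and only if the two topologies induce the same closure operator; since $\tau_u\subseteq\tau_q$ forces $\overline{\mathcal A}^{\tau_q}\subseteq\overline{\mathcal A}^{\tau_u}$ automatically, the whole content of part (2) is the reverse inclusion for every $\mathcal A\subseteq X/K$.

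To make that inclusion explicit I would compute both closures. Fix $\mathcal A\subseteq X/K$ and set $A=q^{-1}(\mathcal A)$, a $K$-saturated set with $q(A)=\mathcal A$. Lemma~\ref{p:q(A)} gives the quotient closure $\overline{\mathcal A}^{\tau_q}=\overline{q(A)}=q({\overline A}^s)$, while the closure of a set in a cofinite space is the intersection of its clopen entourage-neighbourhoods (Lemma~\ref{n:Cofinite Properties}), so by the computation $\overline{\mathcal A}^{\tau_u}=\bigcap_{S\in J}S[\mathcal A]=\bigcap_{R\in I'}q(R[A])$. Thus the homeomorphism condition is $\bigcap_{R\in I'}q(R[A])=q({\overline A}^s)$ for all $K$-saturated $A$. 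Pulling this back through $q$, which commutes with intersections, turns the left side into $\bigcap_{R\in I'}K[R[A]]=\bigcap_{R\in I'}(KR)[A]=\bigcap_{R\in I'}R[A]$ and the right side into $K[{\overline A}^s]={\overline A}^s$; as $q$ is surjective, the image equation is equivalent to its pullback, so the condition becomes ${\overline A}^s=\bigcap_{R\in I'}R[A]$ for all $K$-saturated $A$. Finally, both sides depend only on $K[A]$---one has $R[A]=R[K[A]]$ because $RK=R$, and ${\overline A}^s={\overline{K[A]}}^s$---so quantifying over $K$-saturated sets is the same as quantifying over all subsets of $X$, which is exactly the stated condition.

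The step I expect to be the main obstacle is not any single inclusion but the bookkeeping that makes the argument an honest equivalence: obtaining the computational identity $(q\times q)[R]\,[q(A)]=q(R[A])$ cleanly, and then checking that pulling back along $q$ preserves equality in both directions, so that the characterization is a genuine ``if and only if.'' The auxiliary reductions---via $KR=RK=R$ and ${\overline A}^s={\overline{K[A]}}^s$---that let one pass between arbitrary and $K$-saturated subsets are the other place where care is required, since the theorem quantifies over all $A$ while the closure computations most naturally live on saturated sets.
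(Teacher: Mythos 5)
Your proposal is correct, and it reorganizes the argument in a genuinely different way from the paper, even though it rests on the same two pillars: Lemma~\ref{p:q(A)} for the quotient-topology closure, and the cofinite-space closure formula $\overline{\mathcal A}=\bigcap_{S}S[\mathcal A]$ for the uniform-topology closure (note that Lemma~\ref{n:Cofinite Properties} misprints this with a union, but the intersection form is what the paper itself uses inside its proof). The paper proves the two directions of part (2) as two separate implications, each by an elementwise chase: for the forward direction it uses that a homeomorphism is a closed map and then manipulates representatives $(t_{1_R},t_{2_R})\in R$ to identify $q^{-1}(\overline{q(A)}^{X/\!/K})$ with $\bigcap_{R\in I'}R[A]$; for the converse it shows every closed set $B\subseteq X/K$ satisfies $B=\overline{B}^{X/\!/K}$, so $\id$ is a closed map. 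You instead run a single chain of equivalences --- homeomorphism $\iff$ the two closure operators agree $\iff$ $q(\overline A^s)=\bigcap_{R\in I'}q(R[A])$ for $K$-saturated $A$ $\iff$ $\overline A^s=\bigcap_{R\in I'}R[A]$ --- where the last step uses that equality of images under the surjection $q$ is equivalent to equality of their $q$-preimages, and all elementwise work is absorbed into the absorption identities $KR=RK=R$ and the key computation $(q\times q)[R][q(A)]=q(R[A])$ (an identity the paper also reaches, in the guise of $\bigcap(qRq^{-1})[A_K]=\bigcap q(R[A])$, but only at the end of its forward direction). Your organization buys two things: no duplication of effort between the two directions, and an explicit, justified passage between $K$-saturated and arbitrary subsets (via $\overline A^s=\overline{K[A]}^s$ and $R[A]=R[K[A]]$), a bookkeeping point the theorem's quantification over all $A$ requires but the paper handles only implicitly. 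Part (1) is essentially the same argument in both treatments.
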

\begin{proof} We will prove the results in the order they appear.
\begin{enumerate}
\item Its obvious that $\id\from X/K\to X/\!/K$ is  a bijection. Now let $O$ be open in $X/\!/K$. More over let us take $x\in q^{-1}(id^{-1}(O))$ so that $ K[x]\in \id^{-1} (O) = O$. Hence there is a cofinite entourage $R$ over $X$ such that  $K[x]\in (q\times q)[R][K[x]]\subseteq O$. Now let $t\in R[x]$. Hence $(x,t)\in R$ which implies that $(K[x],K[t])\in (q\times q)[R]$. Therefore $K[t]\in (q\times q)[R][K[x]]\subseteq O$, so $t\in q^{-1}(O)$. Hence $x\in R[x]\subseteq q^{-1}(O)$. Hence $q^{-1}(O)$ is open in $X$. Thus $O$ is open in $X/K$, proving the continuity of $\id$.

\item Let us first assume that $\id$ is a homeomorphism between $X/K$ and $X/\!/K$. Let $I^{\prime} = \{R\mid R$ is a cofinite entourage over $X$ and $K\subseteq R\}$. Now for any subset $Q$ of $X/\!/K$ we observe that the closure of $Q$, $\overline{Q} = \bigcap_{R\in I^{\prime}}(q\times q)[R][Q]$. As $\id$ is a homeomorphism it is also a closed map. So $q(A) = A_K$. We also can now claim that $\bigcap_{R\in I^{\prime}}(q\times q)[R][A_K] = A_{K_q}$ and so it follows from Lemma~\ref{p:q(A)} 

$$
\overline{A}^s=q^{-1}(\overline{A_K}^{X/K})=q^{-1}(\overline{A_K}^{X/\!/K})=q^{-1}(A_{K_q})
$$ 

If $x\in \overline{A}^s$ it follows that $q(x)\in A_{K_q}$ which implies that for all $R\in I^{\prime}$ there exists $a_R\in A$ such that $q(x)\in (q\times q)[R][q(a_R)]$. Then $(q(a_R),q(x))\in (q\times q)[R]$. Hence $\exists(t_{1_R},t_{2_R})\in R$, for all $R\in I^{\prime}$ such that $q(t_{1_R}) = q(a_R)$ and $q(t_{2_R}) = q(x)$. So $(a_R,t_{1_R})\in K\subseteq R, (t_{1_R},t_{2_R})\in R, (t_{2_R},x)\in K\subseteq R, \forall R\in I^{\prime}$. Thus $(a_R,x)\in R, \forall R\in I^{\prime}$. Sot $x\in R[a_R], \forall R\in I^{\prime}$ and thus $x\in\bigcap_{R\in I^{\prime}}R[A]$. 

On the other hand, let us take $y\in \bigcap_{R\in I^{\prime}}R[A]$. This implies that for all $R$ in $I^{\prime}$ there exists $b_R\in A$ such that $y\in R[b_R]$. So $(b_R,y)\in R$, for all $R\in I^{\prime}$. This implies that $(q(b_R),q(y))$ is in $(q\times q)[R]$, for all $R\in I^{\prime}$, so $q(y)\in A_{K_q}$ and therefore $y\in q^{-1}(A_{K_q}) = \overline{A}^s$. Thus $\overline{A}^s = \bigcap_{R\in I^{\prime}}R[A]$. Let us now note that $A_{K_q} = \bigcap(qRq^{-1})[A_K] = \bigcap (qRK)[A] = \bigcap q(R[A])$ as $K\subseteq R$ and for all $R\in I^{\prime}$, Hence $A_{K_q} = (qq^{-1})(\bigcap q(R[A])) = q(\bigcap(KR)[A]) = q(\bigcap_{R\in I^{\prime}}R[A])$.

Conversely, let us assume that $\overline{A}^s = \bigcap_{R\in I^{\prime}}R[A]$. 
We will first see that for any subset $A$ of $X/K$, $q^{-1}(A)$ is $K$-saturated. For, $x\in K[q^{-1}(A)]$ implies that there exists $a\in q^{-1}(A)$, such that $(a,x)\in K$ and then $q(x) = q(a)\in A$. Hence $x\in q^{-1}(A)$. So $K[q^{-1}(A)]\subseteq q^{-1}(A)\subseteq K[q^{-1}(A)]$. Hence $K[q^{-1}(A)] = q^{-1}(A)$. Now let $B$ be closed in $X/K$. Then $C = q^{-1}(B)$ is closed in $X$.  Hence $C$ is a closed $K$-saturated subset of $X$. Hence we claim that $C = \overline{C}^{s} = \bigcap_{R\in I^{\prime}}R[C]$.\\

We now want to prove that $B = q(C) = \bigcap_{R\in I^{\prime}}(q\times q)[R][q(C)]$. To see this let $s\in q(C)$. This implies that there exists $t\in C$ such that $s = q(t)$ and for some $b_R\in R[C], \forall R\in I^{\prime}$ and $(b_R,t) \in R$. Then $(q(b_R),q(t))\in (q\times q)[R]$. Hence, for all $R\in I^{\prime}$ there exists $s\in (q\times q)[R][q(b_R)] \subseteq  (q\times q)[R][q(C)]$ and thus $s\in \bigcap_{R\in I^{\prime}}(q\times q)[R][q(C)]$. For the other way, let $z\in \bigcap_{R\in I^{\prime}}(q\times q)[R][q(C)]$. This implies there exists $c_R\in C$ such that $(q(c_R),z)\in (q\times q)[R]$, for all $R\in I^{\prime}$ and so for all $R\in I^{\prime}$, there exists $(m,n)$ in $R$ such that $q(m) = q(c_R), q(n) = z$. Then, for all $R\in I^{\prime}, (c_R,m)\in K\subseteq R$ and so $(c_R,n)\in R$, for all $R\in I^{\prime}$. Consequently, for all $R\in I^{\prime}, n$ in $R[c_R]\subseteq R[C]$, so $n\in \bigcap_{R\in I^{\prime}}R[C] = C$. Thus $z = q(n)\in q(C)$. S we get our final claim $
B = \bigcap_{R\in I^{\prime}}(q\times q)[R][q(C)] = \overline B^{X/\!/K}
$ and so $\id$ is a closed map and thus is a homeomorphism.                         
\end{enumerate} 
\end{proof}
\begin{corollary}
If $K$ is an equivalence relation on a cofinite space $X$ such that $X/K$ is compact and 
$\bigcap\{R\mid \hbox{$R\in I$ and $K\subseteq R$}\}=K$, then 
$\id\from X/K\to X/\!/K$ is a homeomorphism.
\end{corollary}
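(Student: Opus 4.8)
The plan is to combine two facts that are already available: the hypothesis on $K$ forces the target $X/\!/K$ to be Hausdorff, and the identity map is already known to be a continuous bijection. Once both are in hand, the conclusion follows from the classical theorem that a continuous bijection out of a compact space onto a Hausdorff space is a homeomorphism.

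First I would observe that the standing hypothesis $\bigcap\{R\mid R\in I,\ K\subseteq R\}=K$ is exactly condition (2) of Proposition~\ref{p:Hausdorff quotient}. Applying that proposition, we conclude that $X/\!/K$ is a Hausdorff cofinite space. Next, by Theorem~\ref{p:iff}(1)---which holds for \emph{any} equivalence relation $K$ on a cofinite space and requires no further assumptions---the identity map $\id\from X/K\to X/\!/K$ is a continuous bijection.

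It then remains to upgrade this continuous bijection to a homeomorphism, and this is where the compactness hypothesis on $X/K$ enters. I would argue that $\id$ is a closed map: if $C\subseteq X/K$ is closed, then $C$ is compact since $X/K$ is compact; its continuous image $\id(C)$ is a compact subset of $X/\!/K$; and because $X/\!/K$ is Hausdorff, $\id(C)$ is closed. Hence $\id$ is a continuous closed bijection, so its inverse is continuous and $\id$ is a homeomorphism.

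There is no real obstacle here: the hypotheses have been arranged precisely so that the standard compact-to-Hausdorff argument applies, with Proposition~\ref{p:Hausdorff quotient} supplying the Hausdorff property of the target and Theorem~\ref{p:iff}(1) supplying continuous bijectivity. One could instead try to verify the saturated-closure criterion of Theorem~\ref{p:iff}(2) directly---showing $\overline A^s=\bigcap R[A]$ for every $A\subseteq X$---but the compactness route bypasses that computation entirely and is the cleaner derivation.
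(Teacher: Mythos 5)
Your proposal is correct and follows exactly the paper's own argument: invoke Proposition~\ref{p:Hausdorff quotient} to get that $X/\!/K$ is Hausdorff, note that $\id\from X/K\to X/\!/K$ is a continuous bijection by Theorem~\ref{p:iff}(1), and conclude via the standard fact that a continuous bijection from a compact space to a Hausdorff space is a homeomorphism. You have merely spelled out the closed-map details that the paper leaves implicit.
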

\begin{proof}
By Proposition~\ref{p:Hausdorff quotient}, $X/\!/K$ is Hausdorff and so $\id$ is a continuous bijection from a compact space to a Hausdorff space and thus is a homeomorphism.
\end{proof}
\begin{corollary}
If $X$ is a cofinite space and $R$ is a cofinite entourage of $X$, then 
$\id\from X/R\to X/\!/R$ is a homeomorphism.
\end{corollary}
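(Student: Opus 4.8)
The plan is to obtain this as the special case $K=R$ of the preceding corollary, so the whole task reduces to checking that the two hypotheses of that corollary hold when the equivalence relation is the cofinite entourage $R$ itself. Writing $I$ for the filter base of cofinite entourages of $X$, these hypotheses are that $X/R$ is compact and that $\bigcap\{S\mid S\in I \text{ and } R\subseteq S\}=R$.

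For compactness, I would simply unwind the definitions: a cofinite entourage is by definition an entourage that is also a cofinite equivalence relation, and a cofinite equivalence relation is one whose quotient space is a finite discrete space. Hence $X/R$ is finite and discrete, and in particular compact. For the intersection condition, the key observation is that $R$ is itself a member of the indexing family, since $R\in I$ and $R\subseteq R$. Every $S$ in the family contains $R$, so the intersection contains $R$; and because $R$ is one of the sets being intersected, the intersection is also contained in $R$. Therefore the intersection equals $R$.

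Both hypotheses being verified, the preceding corollary immediately yields that $\id\from X/R\to X/\!/R$ is a homeomorphism. There is no genuine obstacle here: the finiteness packaged into the notion of a cofinite entourage furnishes the compactness of $X/R$, while the intersection condition holds for free precisely because $R$ itself appears in the indexing family $\{S\in I\mid R\subseteq S\}$. The only point worth stating explicitly is that $I$ is understood to be the filter base of \emph{all} cofinite entourages, so that membership $R\in I$ is automatic from $R$ being a cofinite entourage.
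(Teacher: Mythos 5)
Your proposal is correct and follows essentially the same route as the paper: verify that $X/R$ is finite discrete (hence compact) and that $\bigcap\{S\in I\mid R\subseteq S\}=R$ because $R$ itself belongs to the indexing family, then invoke the preceding corollary (via Proposition~\ref{p:Hausdorff quotient}). Your justification of the intersection condition is in fact spelled out more explicitly than in the paper, which simply asserts it.
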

\begin{proof}
First let us take $I = \{S\mid S$ is a cofinite entourage over $X$ and $R\subseteq S\}$.
Since $R$ is a cofinite entourage, $X/R$ is finite discrete and thus compact. Also 
$$
\bigcap\{S\in I; R\subseteq S\}=R
$$ 
and thus by Proposition~\ref{p:Hausdorff quotient}, $X/\!/R$ is Hausdorff, so by the  previous corollary, $\id\from X/R\to X/\!/R$ is a homeomorphism.  
\end{proof} 
\section{Inverse limits of compact Hausdorff spaces}
We begin with some observations about general inverse systems of topological spaces. Let $(X_i,\phi_{ij})$ be an inverse system of topological spaces indexed by a directed set~$I$. 

\begin{note} \label{n:projlim1}
Let $X$ denote the inverse limit of $(X_i,\phi_{ij})$ and let $\phi_i\from X\to X_i$ be the canonical map for each $i\in I$. 
\begin{enumerate}
\item The family of sets $\phi_i^{-1}(U_i)$, where $i\in I$ and $U_i$ is open in $X_i$, is a basis for the topology of $X$.
\item Let $A$ be a subset of $X$ and write $A_i=\phi_i(A)$ for each $i\in I$. Then
$$
\overline A=\bigcap_{i\in I}\phi_i^{-1}(\overline{A_i}) =\varprojlim\overline{A_i}.
$$
\item If $A$ is a subset of $X$ satisfying $\phi_i(A)=X_i$ for all $i\in I$, then $A$ is dense in $X$.
\item If $f\from Y\to X$ is a function from a space $Y$, then $f$ is continuous if and only if each composition $\phi_i f$ is continuous.
\end{enumerate}
\end{note}

Next we specialize to compact Hausdorff spaces.

\begin{note}
Let $(X_i,\phi_{ij})$ be an inverse system of non-empty compact Hausdorff spaces indexed by a directed set $I$. Then the inverse limit $X=\varprojlim X_i$ has the following properties:
\begin{enumerate}
\item $X$ is a non-empty compact Hausdorff space.
\item $\phi_i(X)=\bigcap_{j\ge i}\phi_{ij}(X_j)$ for each $i\in I$.
\item If $A$, $B$ are disjoint closed subsets of $X$, then there exists $i\in I$ such that $\phi_i(A)$, $\phi_i(B)$ are disjoint closed subsets of $X_i$.
\item If $Y$ is a discrete space and $f\from X\to Y$ is a continuous map, then $f$ factors through some $X_k$; i.e., for some $k\in I$ there is a continuous map $h\from X_k\to Y$ such that $f=h\phi_k$.
\end{enumerate}
\end{note}

\note \label{n:ProSpace}
The following conditions are equivalent for any compact Hausdorff space $X$:
\begin{enumerate}
\item X is totally disconnected;
\item the clopen subsets of $X$ form a basis for its topology;
\item $\bigcap\{R \mid\hbox{$R$ is a co-discrete equivalence relation on $X$}\}$ is equal to the diagonal of $X\times X$;
\item $X$ is Hausdorff cofinite space, when endowed with the unique uniform structure;
\item $X$ is the inverse limit of an inverse system $(X_i,\phi_{ij})$ of finite discrete spaces.
\end{enumerate}
\endnote

\begin{lemma}
Let $X$ be a compact Hausdorff space and let $x\in X$. Then the intersection of all clopen subsets of $X$ that contain~$x$ is equal to the component of $x$.
\end{lemma}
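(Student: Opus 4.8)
The plan is to prove this classical fact by showing the two inclusions separately, where one direction is elementary and holds in any space, while the other rests entirely on compactness and Hausdorffness. Write $Q$ for the intersection of all clopen subsets of $X$ containing $x$ (the so-called quasi-component of $x$) and write $C$ for the connected component of $x$. I want to establish $Q = C$.

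First I would dispose of the easy inclusion $C \subseteq Q$. Let $U$ be any clopen set containing $x$. Then $U \cap C$ is a nonempty (it contains $x$) clopen subset of the connected space $C$, hence equals $C$; that is, $C \subseteq U$. Since this holds for every clopen $U$ containing $x$, intersecting gives $C \subseteq Q$. This step uses only connectedness of $C$ and requires no hypotheses on $X$.

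The substance of the argument is the reverse inclusion $Q \subseteq C$, and the natural route is to prove that $Q$ itself is connected: since $Q$ is then a connected set containing $x$, maximality of the component forces $Q \subseteq C$. To see $Q$ is connected, note first that $Q$, being an intersection of closed sets, is closed, hence compact. Suppose toward a contradiction that $Q = A \sqcup B$ with $A, B$ disjoint, nonempty, and closed in $Q$ (hence closed in $X$, since closed subsets of a compact Hausdorff space are closed), and say $x \in A$. Because a compact Hausdorff space is normal, I can choose disjoint open sets $U \supseteq A$ and $V \supseteq B$. Now comes the key compactness step: the clopen sets $W_\alpha$ containing $x$ satisfy $\bigcap_\alpha W_\alpha = Q \subseteq U \cup V$, so the closed sets $W_\alpha \cap (X \setminus (U \cup V))$ inside the compact set $X \setminus (U \cup V)$ have empty intersection; by the finite intersection property, finitely many already intersect to the empty set, yielding a single clopen set $W = W_{\alpha_1} \cap \cdots \cap W_{\alpha_n}$ with $x \in W$ and $W \subseteq U \cup V$.

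The decisive observation is then that $W \cap U$ is clopen: it is open as an intersection of open sets, and since $W \subseteq U \cup V$ with $U \cap V = \varnothing$ we have $W \cap U = W \setminus V = W \cap (X \setminus V)$, an intersection of two closed sets. As $x \in W \cap U$, this is a clopen set containing $x$, so by definition $Q \subseteq W \cap U \subseteq U$. But $B \subseteq Q \subseteq U$ while also $B \subseteq V$, forcing $B \subseteq U \cap V = \varnothing$, contradicting $B \neq \varnothing$. Hence $Q$ is connected and $Q \subseteq C$, completing the proof. I expect the main obstacle to be the $Q \subseteq C$ direction, and within it the combination of normality (to separate $A$ and $B$) with the finite-intersection extraction of a single clopen $W$; the clean trick that $W \cap U$ is clopen is what makes the contradiction work and is the crux of the whole argument.
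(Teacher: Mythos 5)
Your proof is correct, and it is the standard argument that in a compact Hausdorff space the quasi-component of a point coincides with its component: the easy inclusion from connectedness of $C$, then connectedness of $Q$ via normality, the finite-intersection extraction of a single clopen $W\subseteq U\cup V$, and the observation that $W\cap U$ is clopen. The paper itself states this lemma without any proof (it is invoked as a classical fact), so there is nothing to compare against; your write-up supplies exactly the missing argument. One trivial wording slip: $A$ and $B$ are closed in $X$ because they are closed subsets of the closed set $Q$ (or because compact subsets of a Hausdorff space are closed), not "since closed subsets of a compact Hausdorff space are closed."
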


\begin{definition}[Profinite space]\rm
A compact Hausdorff space $X$ that satisfies the equivalent conditions of the previous result is called a {\it profinite space}.
\end{definition}

We will always assume that a profinite space $X$ is endowed with the unique uniform structure that induces its topology, and hence, by~\ref{n:ProSpace}(4), $X$ is a Hausdorff cofinite space. 
Thus profinite spaces are precisely the compact, Hausdorff cofinite spaces.


\section{Topological graphs}

A {\it topological graph\/} is a topological space $\Gamma$ that is partitioned into two closed subsets $V(\Gamma)$ and $E(\Gamma)$ together with two continuous functions $s,t\from E(\Gamma)\to V(\Gamma)$ and a continuous function $\overline{\phantom e}\from E(\Gamma)\to E(\Gamma)$ satisfying the following properties: for every $e\in E(\Gamma)$,
\begin{enumerate}
\item $\overline{e}\ne e$ and $\overline{\overline e}=e$;
\item $t(\overline e)=s(e)$ and $s(\overline e)=t(e)$.
\end{enumerate}
The elements of $V(\Gamma)$ are called {\it vertices\/}. An element $e\in E(\Gamma)$ is called a ({\it directed}) {\it edge\/} with {\it source\/} $s(e)$ and {\it target\/} $t(e)$; the edge $\overline e$ is called the {\it reverse\/} or {\it inverse\/} of $e$. 

A {\it map of graphs\/} $f\from\Gamma\to\Delta$ is a function that maps vertices to vertices, edges to edges, and preserves sources, targets, and inverses of edges. Analogously, we will call a map of graphs a {\it graph isomorphism\/} if and only if it is a bijection. 

An {\it orientation} of a topological graph $\Gamma$ is a closed subset $E^+(\Gamma)$ consisting of exactly one edge in each pair $\{e,\overline e\}$. 
In this situation, setting 
$E^-(\Gamma)=\{e\in E(\Gamma) \mid \overline e\in E^+(\Gamma)$\} we see that $E(\Gamma)$ is a disjoint union of the two closed (hence also open) subsets  
$E^+(\Gamma)$, $E^-(\Gamma)$.   

\begin{note}
Let $\Gamma$ be a topological graph. The following are equivalent:
\begin{enumerate}
\item $\Gamma$ admits an orientation;
\item there exists a continuous map of graphs from $\Gamma$ to the discrete graph with a single vertex and a single edge and its inverse;
\item there exists a continuous map of graphs $f\from\Gamma\to\Delta$ for some discrete graph $\Delta$.
\end{enumerate}
 \end{note}

Conceivably there are topological graphs that do not admit closed orientations. However such graphs will not concern us. 
Therefore, unless otherwise stated, by a topological graph we will henceforth mean a topological graph that admits an orientation. 

We will be interested in equivalence relations on graphs that are compatible with the graph structure:

\begin{definition}[Compatible equivalence relation]\rm
An equivalence relation $R$ on a graph $\Gamma$ is {\it compatible\/} if the following properties hold:
\begin{enumerate}
\item $R=R_V\cup R_E$ where $R_V$, $R_E$ are equivalence relations on $V(\Gamma)$, $E(\Gamma)$, precisely the restriction of $R$;
\item if $(e_1,e_2)\in R$, then $(s(e_1),s(e_2))\in R$, $(t(e_1),t(e_2))\in R$, and 
$(\overline e_1,\overline e_2)\in R$;
\item for all $e\in E(\Gamma)$, $(e,\overline e)\notin R$;
\end{enumerate}
\end{definition}

\begin{note}
If $K$ is a compatible equivalence relation on $\Gamma$, then there is a unique way to make $\Gamma/K$ into a graph such that the canonical map $\Gamma\to\Gamma/K$ is a map of graphs. It is defined by setting $s(K[e])=K[s(e)]$, $t(K[e])=K[t(e)]$, and 
$\overline{K[e]}=K[\overline e]$.\\
 
Conversely, if $\Delta$ is a graph and $f\from\Gamma\to\Delta$ is a surjective map of graphs, then
$K=f^{-1}f=\{(a,b)\in\Gamma\times\Gamma\mid f(a)=f(b)\}$ is a compatible equivalence relation on $\Gamma$ and $f$ induces an isomorphism of graphs such that $\Gamma/K\cong\Delta$.
\end{note}
 
\begin{note}
If $R_1$ and $R_2$ are compatible equivalences on $\Gamma$, then so is $R_1\cap R_2$.
\end{note}
 
\begin{theorem} \label{t:cofinal}
Let $R$ be any cofinite equivalence relation on a topological graph~$\Gamma$. Then there exists a compatible cofinite equivalence relation $S$ on $\Gamma$ such that $S\subseteq R$.
\end{theorem}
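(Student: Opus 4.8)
The plan is to construct $S$ explicitly as a finite intersection of cofinite equivalence relations built from $R$, imposing the three compatibility conditions one at a time, and to verify cofiniteness at each step using Lemma~\ref{n:Properties of cofinite equiv rel}. First I would arrange that $S$ does not mix vertices and edges. Since $V(\Gamma)$ and $E(\Gamma)$ are complementary closed sets they are clopen, so $R_0=(V(\Gamma)\times V(\Gamma))\cup(E(\Gamma)\times E(\Gamma))$ is a co-discrete equivalence relation with exactly two classes, hence a cofinite equivalence relation on $\Gamma$. Replacing $R$ by $R\cap R_0$ (still cofinite by Lemma~\ref{n:Properties of cofinite equiv rel}(1), and still contained in the original $R$), I may assume $R\subseteq R_0$. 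Set $R_V=R\cap(V(\Gamma)\times V(\Gamma))$ and $R_E=R\cap(E(\Gamma)\times E(\Gamma))$; these are cofinite equivalence relations on the clopen subspaces $V(\Gamma)$, $E(\Gamma)$ by Lemma~\ref{n:Properties of cofinite equiv rel}(5).

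Next I would define the edge part of $S$. Writing $\iota\from E(\Gamma)\to E(\Gamma)$ for the continuous involution $e\mapsto\overline e$, set
$$
S_E=\bigl(R_E\cap(\iota\times\iota)^{-1}[R_E]\bigr)\cap(s\times s)^{-1}[R_V]\cap(t\times t)^{-1}[R_V]\cap R_{\pm},
$$
where $R_{\pm}=(E^+(\Gamma)\times E^+(\Gamma))\cup(E^-(\Gamma)\times E^-(\Gamma))$ is the two-class co-discrete relation coming from a fixed orientation. Each factor is a cofinite equivalence relation on $E(\Gamma)$: the pullbacks along the continuous maps $s$, $t$, $\iota$ are cofinite by Lemma~\ref{n:Properties of cofinite equiv rel}(4), and $R_{\pm}$ is cofinite because $E^{\pm}(\Gamma)$ are clopen; hence $S_E$ is cofinite by Lemma~\ref{n:Properties of cofinite equiv rel}(1). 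The three groups of factors target the three compatibility conditions. The factors $(s\times s)^{-1}[R_V]$ and $(t\times t)^{-1}[R_V]$ force $(s(e_1),s(e_2)),(t(e_1),t(e_2))\in R_V$ whenever $(e_1,e_2)\in S_E$. Using the identities $s\iota=t$, $t\iota=s$, and $\iota(E^{\pm})=E^{\mp}$, one checks that $(s\times s)^{-1}[R_V]\cap(t\times t)^{-1}[R_V]$ and $R_{\pm}$ are each invariant under $(\iota\times\iota)^{-1}$, while $R_E\cap(\iota\times\iota)^{-1}[R_E]$ is the $\iota$-symmetrization of $R_E$; consequently $S_E$ is $\iota$-invariant, which is precisely the assertion that $(e_1,e_2)\in S_E$ implies $(\overline e_1,\overline e_2)\in S_E$.

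Finally I would set $S=R_V\cup S_E$. This is an equivalence relation on $\Gamma=V(\Gamma)\sqcup E(\Gamma)$ separating vertices from edges, with $S\cap(V(\Gamma)\times V(\Gamma))=R_V$ and $S\cap(E(\Gamma)\times E(\Gamma))=S_E$, so condition (1) holds; conditions (2) and (3) are the source/target and inverse compatibilities verified above, together with the observation that $e$ and $\overline e$ always lie on opposite sides of the orientation, so $(e,\overline e)\notin R_{\pm}\supseteq S_E$. Since $R_V\subseteq R$ and $S_E\subseteq R_E\subseteq R$, we get $S\subseteq R$; and $S$ is cofinite on $\Gamma$ by co-discreteness (each class $S[a]$ is a clopen subset of the clopen piece $V(\Gamma)$ or $E(\Gamma)$, hence clopen in $\Gamma$) together with finiteness of the class set $V(\Gamma)/R_V\sqcup E(\Gamma)/S_E$. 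I expect the main obstacle to be condition (3): nothing in $R$ prevents an edge from being $R$-related to its own inverse, and it is exactly the standing assumption that $\Gamma$ admits a clopen orientation that lets me separate $e$ from $\overline e$ via $R_{\pm}$ while keeping the relation $\iota$-invariant. A secondary point requiring care is that intersecting with the source/target pullbacks and with $R_{\pm}$ must not destroy the $\iota$-invariance needed for condition (2), which is why the identities $s\iota=t$, $t\iota=s$, $\iota(E^{\pm})=E^{\mp}$ are essential.
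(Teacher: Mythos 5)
Your proof is correct and takes essentially the same approach as the paper: both enforce compatibility condition (2) by intersecting (restrictions of) $R$ with its pullbacks along the source, target, and inversion maps, and both use the clopen orientation classes $E^{+}(\Gamma)$, $E^{-}(\Gamma)$ to enforce condition (3). Indeed, the relation $S=R_V\cup S_E$ you build is literally the same relation the paper constructs; the only difference is bookkeeping, since the paper first extends $s,t,\overline{\phantom e}$ to all of $\Gamma$, pulls $R$ back, intersects, and only then restricts to $V(\Gamma)\times V(\Gamma)$, $E^+(\Gamma)\times E^+(\Gamma)$, $E^-(\Gamma)\times E^-(\Gamma)$, whereas you restrict to the vertex and edge pieces from the start.
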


\begin{proof}
Extend the source and target maps $s,t\from E(\Gamma)\to V(\Gamma)$ to all of $\Gamma$ so that they are both the identity map on~$V(\Gamma)$. Then $s,t\from\Gamma\to\Gamma$ are continuous maps satisfying the following properties:
\begin{itemize}
\item $s^2=s$, $t^2=t$, $st=t$, and $ts=s$;
\item $s(x)=x \iff t(x)=x \iff x\in V(\Gamma)$.
\end{itemize}

Similarly, extend the edge inversion map 
$\overline{\phantom e}\from E(\Gamma)\to E(\Gamma)$ to all of  $\Gamma$ by also letting it be the identity map on $V(\Gamma)$. Then $\overline{\phantom e}\from\Gamma\to\Gamma$ is a continuous map satisfying the following conditions for all $x\in\Gamma$:
\begin{itemize}
\item $\overline{\overline x}=x$;
\item $\overline x=x \iff x\in V(\Gamma)$;
\item $s(\overline x)=t(x)$ and $t(\overline x)=s(x)$. 
\end{itemize}

Now define
$S_1=\{ (x,y)\in\Gamma\times\Gamma\mid(s(x),s(y))\in R \}=(s\times s)^{-1}[R]$,  $S_2=\{ (x,y)\in\Gamma\times\Gamma\mid(t(x),t(y))\in R \}=(t\times t)^{-1}[R]$, and
$S_3=\{ (x,y)\in\Gamma\times\Gamma\mid(\overline x,\overline y)\in R)
=(\overline{\phantom e}\times\overline{\phantom e})^{-1}[R]$. Then, by Theorem~\ref{correspondence}, $S_1$, $S_2$, $S_3$ are cofinite equivalence relations on $\Gamma$. 
Let $S_4=R\cap S_1\cap S_2\cap S_3$ and observe that
\begin{enumerate}
\item[(i)] $S_4$ is a cofinite equivalence relation on $\Gamma$;
\item[(ii)] if $(e_1,e_2)\in S_4$, then $(s(e_1),s(e_2))\in S_4$, $(t(e_1),t(e_2))\in S_4$, and $(\overline e_1,\overline e_2)\in S_4$.
\end{enumerate}

Finally, choose a closed orientation $E^+(\Gamma)$ of $\Gamma$ and form the restrictions 
$S_V=S_4\cap[V(\Gamma)\times V(\Gamma)]$, 
$S_{E^+}=S_4\cap[E^+(\Gamma)\times E^+(\Gamma)]$, and
$S_{E^-}=S_4\cap[E^-(\Gamma)\times E^-(\Gamma)]$.
Then it is easy to check that $S=S_V\cup S_{E^+}\cup S_{E^-}$ is a compatible cofinite equivalence relation on $\Gamma$ and $S\subseteq R$, as required.
\end{proof}

The previous proof actually shows a little more, which is worth noting. Given a closed orientation $E^+(\Gamma)$ for $\Gamma$, we say that a compatible equivalence relation $R$ on $\Gamma$ is {\it orientation preserving\/} if whenever $(e,e')\in R$ and $e\in E^+(\Gamma)$, then also $e'\in E^+(\Gamma)$. 
Since the equivalence relation $S$ that we constructed in the proof of Theorem~\ref{t:cofinal} is also orientation preserving, we proved the following stronger result.  

\begin{corollary} \label{c:orientation preserving}
Let $\Gamma$ be a topological graph with a specified closed orientation $E^+(\Gamma)$. Then for any cofinite equivalence relation $R$ on $\Gamma$, there exists a compatible orientation preserving cofinite equivalence relation $S$ on $\Gamma$ such that 
$S\subseteq R$.
\end{corollary}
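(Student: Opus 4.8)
The plan is to reuse verbatim the equivalence relation $S$ built in the proof of Theorem~\ref{t:cofinal}, and then simply observe that it already has the orientation-preserving property. Recall that, starting from the given cofinite equivalence relation $R$, that proof produced a cofinite equivalence relation $S_4=R\cap S_1\cap S_2\cap S_3$ satisfying the compatibility conditions (i) and (ii), and then set $S=S_V\cup S_{E^+}\cup S_{E^-}$, where $S_V=S_4\cap[V(\Gamma)\times V(\Gamma)]$, $S_{E^+}=S_4\cap[E^+(\Gamma)\times E^+(\Gamma)]$, and $S_{E^-}=S_4\cap[E^-(\Gamma)\times E^-(\Gamma)]$. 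We already know from Theorem~\ref{t:cofinal} that $S$ is a compatible cofinite equivalence relation with $S\subseteq R$, so the only thing left to check is that this same $S$ is orientation preserving.

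First I would record that $E^+(\Gamma)$ and $E^-(\Gamma)$ are disjoint. This is immediate from the definition of an orientation: $E^+(\Gamma)$ contains exactly one edge from each pair $\{e,\overline e\}$, and since $\overline e\ne e$ for every edge, no edge can lie in both $E^+(\Gamma)$ and $E^-(\Gamma)$. Combined with the fact that $V(\Gamma)$ and $E(\Gamma)$ partition $\Gamma$ and that $E(\Gamma)=E^+(\Gamma)\cup E^-(\Gamma)$, this shows that the three index sets $V(\Gamma)$, $E^+(\Gamma)$, $E^-(\Gamma)$ are pairwise disjoint.

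The verification of the orientation-preserving condition then follows at once from the block structure of $S$. Suppose $(e,e')\in S$ with $e\in E^+(\Gamma)$. Since $S$ is the union of the three pieces $S_V$, $S_{E^+}$, $S_{E^-}$, the pair $(e,e')$ must lie in at least one of them. It cannot lie in $S_V\subseteq V(\Gamma)\times V(\Gamma)$, because $e$ is an edge and not a vertex; and it cannot lie in $S_{E^-}\subseteq E^-(\Gamma)\times E^-(\Gamma)$, because $e\in E^+(\Gamma)$ is disjoint from $E^-(\Gamma)$. Hence $(e,e')\in S_{E^+}\subseteq E^+(\Gamma)\times E^+(\Gamma)$, which forces $e'\in E^+(\Gamma)$. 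That is precisely the orientation-preserving property.

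There is essentially no obstacle here: the substantive work—extending $s$, $t$, and edge inversion to all of $\Gamma$, pulling back $R$ along each, and intersecting to obtain a compatible cofinite relation $S_4\subseteq R$—was already carried out in Theorem~\ref{t:cofinal}. The present corollary only extracts the observation that the construction segregates the edges of $S$ into the two orientation blocks $S_{E^+}$ and $S_{E^-}$ and never relates an edge of $E^+(\Gamma)$ to an edge of $E^-(\Gamma)$. The one point I would be careful to state explicitly, rather than leave implicit, is the disjointness of $E^+(\Gamma)$ and $E^-(\Gamma)$, since it is exactly what rules out the $S_{E^-}$ alternative in the argument above.
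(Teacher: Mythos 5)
Your proposal is correct and is exactly the paper's argument: the paper proves this corollary by observing that the relation $S=S_V\cup S_{E^+}\cup S_{E^-}$ constructed in the proof of Theorem~\ref{t:cofinal} is already orientation preserving. You have merely spelled out the (trivial but worth stating) block-structure verification, including the disjointness of $E^+(\Gamma)$ and $E^-(\Gamma)$, which the paper leaves implicit.
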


\begin{corollary} \label{c:ProGraph}
If $\Gamma$ is a compact Hausdorff totally disconnected topological graph, then its compatible cofinite equivalence relations form a fundamental system of entourages for the unique uniform structure that induces the topology of~$\Gamma$. 
\end{corollary}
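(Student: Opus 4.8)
The plan is to use Theorem~\ref{t:cofinal} to pass from arbitrary cofinite entourages to compatible ones, while invoking the compactness criterion for cofinite entourages to ensure that compatible cofinite equivalence relations are genuine entourages. Since $\Gamma$ is compact, Hausdorff, and totally disconnected, it is a profinite space, so by~\ref{n:ProSpace} it carries a unique uniform structure inducing its topology, and with respect to this structure $\Gamma$ is a Hausdorff cofinite space. Consequently its cofinite entourages already form a fundamental system of entourages; the goal is to show that the smaller class of compatible cofinite equivalence relations does the same job.

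First I would verify that every compatible cofinite equivalence relation is actually a cofinite entourage. Such an $S$ is in particular a cofinite equivalence relation, hence co-discrete; and since $\Gamma$ is compact and Hausdorff, Lemma~\ref{n:Properties of cofinite entourages}(5) guarantees that every co-discrete equivalence relation on $\Gamma$ is a cofinite entourage. Thus each compatible cofinite equivalence relation is an entourage of $\Gamma$. This is the single place where compactness and the Hausdorff property are essential.

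Next I would show that every entourage of $\Gamma$ contains a compatible cofinite equivalence relation. Given an entourage $U$, the fact that $\Gamma$ is a cofinite space provides a cofinite entourage $R\subseteq U$; Theorem~\ref{t:cofinal} then yields a compatible cofinite equivalence relation $S$ with $S\subseteq R\subseteq U$, and by the previous step $S$ is itself an entourage. Hence the compatible cofinite equivalence relations are cofinal in the entourages.

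Finally I would check the filter-base condition. If $S_1,S_2$ are compatible cofinite equivalence relations, then $S_1\cap S_2$ is again compatible (the intersection of compatible equivalence relations is compatible) and again cofinite by Lemma~\ref{n:Properties of cofinite equiv rel}(1), while obviously $S_1\cap S_2\subseteq S_1$ and $S_1\cap S_2\subseteq S_2$. Together with the two preceding steps, this shows the compatible cofinite equivalence relations form a fundamental system of entourages for the uniformity of $\Gamma$. I do not expect a serious obstacle here, since Theorem~\ref{t:cofinal} carries the main weight; the only point requiring care is recognizing that \emph{cofinite} forces \emph{co-discrete}, so that Lemma~\ref{n:Properties of cofinite entourages}(5) can promote each compatible cofinite equivalence relation to a bona fide entourage.
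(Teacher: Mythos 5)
Your proposal is correct and takes essentially the same route the paper intends for this corollary: use Note~\ref{n:ProSpace} to see that $\Gamma$, under its unique uniformity, is a Hausdorff cofinite space whose cofinite entourages form a fundamental system; apply Theorem~\ref{t:cofinal} to place a compatible cofinite equivalence relation inside any given cofinite entourage; and invoke Lemma~\ref{n:Properties of cofinite entourages}(5) (compactness plus Hausdorffness) to recognize each compatible cofinite equivalence relation as a genuine entourage. Your explicit check of the filter-base condition via intersections is a harmless addition, since it also follows automatically once cofinality in the entourages is established.
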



\begin{definition}[Profinite graph]\rm
A compact Hausdorff totally disconnected topological graph $\Gamma$ is called a {\it profinite graph}.
\end{definition}

As for any compact Hausdorff space, we will view a profinite graph as a uniform space endowed with the unique uniformity that induces its topology. Thus, Corollary~\ref{c:ProGraph} states that the collection of all compatible cofinite equivalence relations on a profinite graph $\Gamma$ form a fundamental system of entourages.


\section{ Cofinite graphs} \label{s:cofinite}

By a {\it uniform topological graph\/} we mean a topological graph $\Gamma$ endowed with a uniform structure that induces its topology such that $\Gamma$ is the uniform sum of its uniform subspaces $V(\Gamma)$, $E(\Gamma)$ and  the maps 
$s,t\from E(\Gamma)\to V(\Gamma)$ and 
$\overline{\phantom e}\from E(\Gamma)\to E(\Gamma)$
are uniformly continuous.

\begin{note}
If $f\from\Gamma\to\Delta$ is a uniformly continuous map of uniform topological graphs then for any compatible cofinite equivalence relation $R$ over $\Delta, (f\times f)^{-1}(R)$ is a compatible cofinite equivalence relation over $\Gamma$.
\end{note}  

We will concentrate our attention on uniform topological graphs of the following type.

\begin{definition}[Cofinite graph] \label{d:Cofinite graph} \rm
A {\it cofinite graph\/} is an abstract graph $\Gamma$ endowed with a Hausdorff uniformity such that the compatible cofinite entourages of $\Gamma$ form a fundamental system of entourages (i.e. every entourage of $\Gamma$ contains a compatible cofinite entourage). 
\end{definition}
\begin{lemma}
Let $\Gamma$ be a cofinite graph. Then $\Gamma$ is a uniform topological graph. In particular,
\begin{enumerate} 
\item[1.] $V(\Gamma)$ and $E(\Gamma)$ are clopen subsets of $\Gamma$;
\item[2.] $\Gamma$ is the uniform sum of its uniform subspaces $V(\Gamma), E(\Gamma)$;
\item[3.] $s, t\from E(\Gamma)\to V(\Gamma)$ and $\overline{\phantom e}\from E(\Gamma)\to E(\Gamma)$ are uniformly continuous maps.
\end{enumerate}
\end{lemma}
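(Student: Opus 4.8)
The plan is to show that a cofinite graph $\Gamma$ satisfies the defining conditions of a uniform topological graph, by extracting everything from the fundamental system of compatible cofinite entourages. The three numbered claims are exactly the components of the definition of uniform topological graph, so proving them establishes the lemma. I would establish them in the order $1, 2, 3$, since clopenness of $V(\Gamma)$ and $E(\Gamma)$ feeds naturally into the uniform sum decomposition, which in turn sets up the uniform continuity check via Lemma~\ref{uniform continuous maps and sum}.

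For claim~1, I would fix any compatible cofinite entourage $R$ (one exists since these form a fundamental system). By the definition of a compatible equivalence relation, $R = R_V \cup R_E$ where $R_V$, $R_E$ are equivalence relations on $V(\Gamma)$, $E(\Gamma)$ respectively; in particular no equivalence class of $R$ meets both $V(\Gamma)$ and $E(\Gamma)$. Hence for each vertex $v$, the class $R[v] \subseteq V(\Gamma)$, and by the lemma following the definition of a cofinite space (every $R[x]$ is clopen and these sets form a basis of the uniform topology), $V(\Gamma) = \bigcup_{v \in V(\Gamma)} R[v]$ is open; likewise $E(\Gamma)$ is open. Since they partition $\Gamma$, each is also closed, giving clopenness.

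For claim~2, I would invoke Lemma~\ref{partition}: $V(\Gamma)$ and $E(\Gamma)$ form a partition of $\Gamma$ into uniform subspaces, so it suffices to check that whenever $U_V$, $U_E$ are entourages of $V(\Gamma)$, $E(\Gamma)$, their union $U_V \cup U_E$ is an entourage of $\Gamma$. Given such $U_V$, $U_E$, by Proposition~\ref{n:uniform subspace is cofinite} I may assume $U_V = R_V \cap (V(\Gamma)\times V(\Gamma))$ and $U_E = R'_E \cap (E(\Gamma)\times E(\Gamma))$ for compatible cofinite entourages $R, R'$ of $\Gamma$; replacing $R$ and $R'$ by a common refinement $R'' \subseteq R \cap R'$ (again compatible and cofinite, using the filter-base property together with Theorem~\ref{t:cofinal}), I get that $R''$ itself, being compatible, splits as $R''_V \cup R''_E$ with $R''_V \subseteq U_V$ and $R''_E \subseteq U_E$, so $R'' \subseteq U_V \cup U_E$. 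Thus $U_V \cup U_E$ contains an entourage of $\Gamma$ and is itself an entourage, and Lemma~\ref{partition} yields $\Gamma = V(\Gamma) \amalg E(\Gamma)$ as a uniform sum.

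For claim~3, I would verify uniform continuity of $s, t, \overline{\phantom e}$ directly from the compatibility conditions. Let $U_V = R_V \cap (V(\Gamma)\times V(\Gamma))$ be a basic entourage of $V(\Gamma)$ coming from a compatible cofinite entourage $R$. Compatibility condition~(2) says that $(e_1, e_2) \in R$ forces $(s(e_1), s(e_2)) \in R$, which means exactly $R_E \subseteq (s\times s)^{-1}[U_V]$; since $R_E = R \cap (E(\Gamma)\times E(\Gamma))$ is an entourage of $E(\Gamma)$, this shows $s$ is uniformly continuous, and the same argument with $t$ and with $\overline{\phantom e}$ (using that $(\overline e_1,\overline e_2)\in R$) handles the remaining two maps. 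The main obstacle, and the step requiring the most care, is the refinement argument in claim~2: I must ensure that a common lower bound for two compatible cofinite entourages can be taken to be \emph{compatible} so that its restrictions land inside the prescribed $U_V$ and $U_E$ simultaneously; this is where Theorem~\ref{t:cofinal} (passing from an arbitrary cofinite entourage $R \cap R'$ to a compatible one below it) does the real work, rather than the intersection being automatically compatible.
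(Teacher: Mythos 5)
Your proof is correct, and since the paper states this lemma without any proof, your argument is essentially the intended one: it assembles exactly the tools the paper develops for this purpose --- the basis lemma for cofinite spaces (each $R[x]$ is clopen), Proposition~\ref{n:uniform subspace is cofinite} for the induced uniformities on $V(\Gamma)$ and $E(\Gamma)$, Lemma~\ref{partition} for the uniform sum decomposition, and the compatibility conditions for the uniform continuity of $s$, $t$, $\overline{\phantom e}$. One correction, though, to your closing remark: Theorem~\ref{t:cofinal} is not where the work lies in claim~2, and in fact it is not needed at all. By the very definition of a cofinite graph, \emph{every} entourage contains a \emph{compatible} cofinite entourage, so the entourage $R\cap R'$ already contains the desired $R''$ by the filter-base property alone; better yet, the paper's notes record that the intersection of two compatible equivalence relations is compatible and that the intersection of two cofinite entourages is cofinite, so $R''=R\cap R'$ itself works --- the intersection \emph{is} automatically compatible, contrary to your final sentence. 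Theorem~\ref{t:cofinal} is the tool for passing from an arbitrary (non-compatible) cofinite equivalence relation on a topological graph to a compatible one beneath it, which is the relevant issue for profinite graphs, not here. Finally, a small completeness point: since the paper's standing convention is that topological graphs admit closed orientations, a fully airtight proof that $\Gamma$ ``is a uniform topological graph'' would add one line noting that $\Gamma$ admits an orientation --- for instance, pull back an orientation of the finite quotient graph $\Gamma/R$ along the quotient map for any compatible cofinite entourage $R$, using compatibility condition~(3) and the clopenness of $R$-classes; this does not affect your three enumerated claims, which you prove correctly.
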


\begin{lemma}
Profinite graphs are precisely the compact cofinite graphs.
\end{lemma}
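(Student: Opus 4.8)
The plan is to prove the two inclusions separately, relying on the characterizations already established for cofinite spaces and for profinite graphs.

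First I would handle the implication that every profinite graph is a compact cofinite graph. Suppose $\Gamma$ is a profinite graph, i.e., a compact Hausdorff totally disconnected topological graph, endowed with its unique compatible uniformity. By Corollary~\ref{c:ProGraph} the compatible cofinite equivalence relations on $\Gamma$ form a fundamental system of entourages for this uniformity; since each such relation is an entourage that is also a cofinite equivalence relation, it is a compatible cofinite entourage. Hence the compatible cofinite entourages form a fundamental system of entourages, and as $\Gamma$ is Hausdorff, $\Gamma$ satisfies Definition~\ref{d:Cofinite graph} of a cofinite graph. Since it is compact by hypothesis, $\Gamma$ is a compact cofinite graph.

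Conversely, I would show that a compact cofinite graph $\Gamma$ is profinite. By definition its uniformity is Hausdorff and its compatible cofinite entourages form a fundamental system of entourages; in particular its cofinite entourages form a fundamental system, so $\Gamma$ is a cofinite space. By the lemma immediately preceding this statement, every cofinite graph is a uniform topological graph, so $V(\Gamma)$ and $E(\Gamma)$ are clopen and the structure maps $s,t,\overline{\phantom e}$ are continuous; thus the underlying space of $\Gamma$, with its uniform topology, is a topological graph. Because $\Gamma$ is a Hausdorff cofinite space, the lemma establishing the equivalence of Hausdorffness, total disconnectedness, and $\bigcap_{R\in I}R=D(\Gamma)$ yields that $\Gamma$ is totally disconnected. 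Together with compactness and the Hausdorff property, this shows $\Gamma$ is a compact Hausdorff totally disconnected topological graph, i.e., a profinite graph.

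The step needing the most care is matching up the uniform structures in each direction. The hard part will be verifying that the relations supplied by Corollary~\ref{c:ProGraph} really are entourages of the unique uniformity of the profinite graph, and dually that the given Hausdorff uniformity of a compact cofinite graph coincides with the canonical (and unique) uniformity a compact Hausdorff space carries, so that the two notions agree on the nose rather than merely up to isomorphism. Once the identification of compatible cofinite equivalence relations with compatible cofinite entourages is made explicit, both points reduce to routine bookkeeping, and I expect no deeper obstacle.
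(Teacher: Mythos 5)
Your proof is correct; in fact the paper states this lemma without any proof, and your argument assembles exactly the ingredients the paper develops for it: Corollary~\ref{c:ProGraph} (plus Hausdorffness of the unique uniformity on a compact Hausdorff space) for the forward direction, and for the converse the lemma that cofinite graphs are uniform topological graphs, the equivalence of Hausdorffness and total disconnectedness for cofinite spaces, and uniqueness of the compatible uniformity on a compact Hausdorff space. The only detail worth adding is that a topological graph in this paper is assumed to admit an orientation, so in the converse direction you should note that any compatible cofinite entourage $R$ of a compact cofinite graph $\Gamma$ yields a map of graphs $\Gamma\to\Gamma/R$ onto a finite discrete graph, which by the paper's criterion guarantees that $\Gamma$ admits an orientation.
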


Let $\Gamma$ be a cofinite graph and let $I$ be a fundamental system of compatible cofinite entourages of $\Gamma$. Then we see  by Note ~\ref{n:Cofinite Properties} that
\begin{enumerate}
\item[(i)] $\bigcap_{R\in I}R=D(\Gamma)$, the diagonal in $\Gamma\times\Gamma$; 
\item[(ii)] $\Gamma$ is totally disconnected; 

\item[(iii)] if $A$ is any subset of $\Gamma$, then 
$\overline A=\bigcap_{R\in I}R[A]$\\

\end{enumerate}
The following lemma is an immediate consequence of Proposition~\ref{uniform continuous maps and sum}.

\begin{lemma} \label{l:pasting lemma}
Let $\Gamma$ be a cofinite graph and let $Z$ be a cofinite space. Then a map $f\from\Gamma\to Z$ is uniformly continuous if and only if both the restrictions 
$f|_{V(\Gamma)}$ and $f|_{E(\Gamma)}$ are uniformly continuous.
\end{lemma}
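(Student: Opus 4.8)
The plan is to obtain this as a direct application of the pasting lemma for uniform sums, Proposition~\ref{uniform continuous maps and sum}, once the correct uniform sum decomposition of the domain is in place. The first step I would take is to invoke the immediately preceding lemma, which asserts that every cofinite graph $\Gamma$ is in fact a uniform topological graph. In particular, that lemma gives us that $V(\Gamma)$ and $E(\Gamma)$ are uniform subspaces of $\Gamma$ and that $\Gamma$ is their uniform sum; that is, $\Gamma=\coprod_{i}X_i$ where the two-element family $(X_i)$ is $\{V(\Gamma),E(\Gamma)\}$. This is the only structural fact about $\Gamma$ that the argument needs.

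With that decomposition in hand, I would apply Proposition~\ref{uniform continuous maps and sum} with $X=\Gamma$, with the finite family $(X_i)$ equal to $\{V(\Gamma),E(\Gamma)\}$, and with target space $Y=Z$. The one point worth remarking on is the hypothesis on $Z$: here $Z$ is assumed to be a cofinite space, whereas Proposition~\ref{uniform continuous maps and sum} only requires the target to be a uniform space. Since a cofinite space is in particular a uniform space, one simply forgets the extra cofinite structure on $Z$ and treats it as a uniform space; no compatibility with the cofinite entourages of $Z$ is needed. The proposition then yields precisely that $f\from\Gamma\to Z$ is uniformly continuous if and only if each of the restrictions $f|_{V(\Gamma)}$ and $f|_{E(\Gamma)}$ is uniformly continuous, which is the assertion of the lemma.

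I do not expect any genuine obstacle, since the substantive work has already been done in Proposition~\ref{uniform continuous maps and sum} and in the preceding lemma; the present statement is essentially a specialization of the former to the two-block partition $\{V(\Gamma),E(\Gamma)\}$. The only thing I would take care to confirm is that this partition really does exhibit $\Gamma$ as a uniform sum in the precise sense required by the pasting lemma, and this is exactly the content supplied by the preceding lemma. Consequently the proof is a short citation of these two results rather than a fresh calculation.
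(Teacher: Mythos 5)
Your proposal is correct and follows essentially the same route as the paper, which likewise presents this lemma as an immediate consequence of Lemma~\ref{uniform continuous maps and sum} applied to the uniform sum decomposition $\Gamma=V(\Gamma)\coprod E(\Gamma)$ furnished by the preceding lemma on cofinite graphs being uniform topological graphs. Your additional remark that $Z$ need only be treated as a uniform space (forgetting its cofinite structure) is accurate and makes explicit a point the paper leaves implicit.
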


As one application of this lemma, we can extend the source map $s\from E(\Gamma)\to V(\Gamma)$ of a cofinite graph $\Gamma$ to a map $s\from\Gamma\to\Gamma$ by letting it be the identity map on $V(\Gamma)$. By Lemma~\ref{l:pasting lemma}, the extension $s\from\Gamma\to\Gamma$ is also uniformly continuous. We can similarly extend the target and inversion maps. Thus, when it is convenient to do so, we may assume that the source, target, and inversion maps are uniformly continuous maps 
$s,t,\overline{\phantom e}\from\Gamma\to\Gamma$ whose fixed points are precisely the vertices of $\Gamma$. 


\subsection{Uniform subgraphs}

Let $\Gamma$ be a cofinite graph. A subgraph $\Sigma$ endowed with the uniformity induced on it by $\Gamma$ is  called a {\it uniform subgraph\/} of $\Gamma$.

Let us observe that the subgraph $\Sigma$ of a cofinite graph $\Gamma$ is itself a cofinite graph, as because if $R$ is a compatible cofinite entourage over $\Gamma$ then so is $R\cap(\Sigma\times \Sigma)$ over $\Sigma$.


\subsection{Inverse Limits of Cofinite Graphs}

Turning to inverse limits of cofinite graphs, let $(\Gamma_i,\phi_{ij})$ be an inverse system of sets indexed by a directed set $I$. We say that $(\Gamma_i,\phi_{ij})$ is an {\it  inverse system of cofinite graphs\/} if (i)~each $\Gamma_i$ is a cofinite graph, 
and (ii)~for all $i\le j$, $\phi_{ij}\from\Gamma_j\to\Gamma_i$ is a uniformly continuous map of graphs. 

As in Section~\ref{ss:Inverse limits of cofinite spaces}, we endow the set $\Gamma=\varprojlim\Gamma_i$ with the coarsest uniformity such that the canonical maps $\phi_i\from\Gamma\to\Gamma_i$ are uniformly continuous. Then by Proposition~\ref{p:Inverse limit of cofinite spaces}, $\Gamma$ is a cofinite space. 
Furthermore, we make the following observation.

\begin{lemma} \label{l:Unique graph structure}
The set $\Gamma$ admits a unique graph structure such that the maps $\phi_i\from\Gamma\to\Gamma_i$ are maps of graphs. 
\end{lemma}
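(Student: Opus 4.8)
The plan is to build the graph structure on $\Gamma = \varprojlim \Gamma_i$ componentwise, using the inverse limits of the vertex and edge sets together with the inverse limits of the structure maps, and then to verify that this is the only choice compatible with the projections being maps of graphs.

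First I would observe that since each $\phi_{ij}\from\Gamma_j\to\Gamma_i$ is a map of graphs, it respects the partition $\Gamma_i = V(\Gamma_i)\sqcup E(\Gamma_i)$; that is, $\phi_{ij}(V(\Gamma_j))\subseteq V(\Gamma_i)$ and $\phi_{ij}(E(\Gamma_j))\subseteq E(\Gamma_i)$. Hence $(V(\Gamma_i),\phi_{ij}|_V)$ and $(E(\Gamma_i),\phi_{ij}|_E)$ are themselves inverse subsystems, and I would set $V(\Gamma)=\varprojlim V(\Gamma_i)$ and $E(\Gamma)=\varprojlim E(\Gamma_i)$, realized as subsets of $\varprojlim \Gamma_i$. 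Because each $\Gamma_i$ is the disjoint union of its vertex and edge sets and the bonding maps preserve this, a standard inverse-limit argument shows $\Gamma = V(\Gamma)\sqcup E(\Gamma)$ as a set. Next, since the maps $s,t\from E(\Gamma_i)\to V(\Gamma_i)$ and $\overline{\phantom e}\from E(\Gamma_i)\to E(\Gamma_i)$ commute with the bonding maps (this is exactly what it means for each $\phi_{ij}$ to preserve sources, targets, and inverses), they assemble into well-defined maps on the inverse limits: I would define $s\from E(\Gamma)\to V(\Gamma)$ by $s(x)=(s(\phi_i(x)))_{i\in I}$, and likewise for $t$ and $\overline{\phantom e}$. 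One must check these land in the appropriate inverse limit and satisfy the graph axioms $\overline{\overline x}=x$, $\overline x\ne x$ for edges, $t(\overline x)=s(x)$, and $s(\overline x)=t(x)$; each of these follows coordinatewise from the corresponding axiom in $\Gamma_i$ together with the fact that two points of an inverse limit agree iff all their projections agree.

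Having defined the graph structure, I would verify that each canonical projection $\phi_i\from\Gamma\to\Gamma_i$ is a map of graphs: it sends $V(\Gamma)$ to $V(\Gamma_i)$ and $E(\Gamma)$ to $E(\Gamma_i)$ by construction, and it intertwines $s,t,\overline{\phantom e}$ because these were defined precisely so that their $i$-th projection is the $i$-th structure map applied after $\phi_i$. For uniqueness, suppose $\Gamma$ carries another graph structure making every $\phi_i$ a map of graphs. Then for any $x\in\Gamma$, the partition class (vertex or edge) of $x$ in either structure must project under each $\phi_i$ to the class of $\phi_i(x)$ in $\Gamma_i$; since the $\phi_i$ jointly separate points and the classes in each $\Gamma_i$ are determined, the two partitions coincide. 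Likewise $\phi_i(s(x))=s(\phi_i(x))$ must hold for all $i$ in any admissible structure, and because points of $\Gamma$ are determined by their projections, the value of $s(x)$ is forced; the same reasoning pins down $t$ and $\overline{\phantom e}$.

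The step I expect to require the most care is confirming that the structure maps are genuinely well-defined into the inverse limits and that the edge/vertex partition of $\Gamma$ is exactly $V(\Gamma)\sqcup E(\Gamma)$ rather than something larger — in other words, that a compatible family need not be ``mixed'' across vertices and edges at different levels. This is handled by the observation that the bonding maps respect the partition, so any coherent sequence is eventually (indeed everywhere) of a single type; but making this rigorous, including the nonemptiness and the axiom $\overline x\ne x$ for a limit edge $x$ (which needs that $x$ and $\overline x$ differ in at least one coordinate, guaranteed since $\phi_i(x)$ is an edge of $\Gamma_i$ with $\overline{\phi_i(x)}\ne\phi_i(x)$), is where the real content lies. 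Everything else is a routine coordinatewise transcription of the graph axioms.
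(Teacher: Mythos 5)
Your proposal is correct and follows essentially the same route as the paper: the paper likewise pins down $V(\Gamma)=\phi_i^{-1}[V(\Gamma_i)]$ and $E(\Gamma)=\phi_i^{-1}[E(\Gamma_i)]$ (equivalently, your $\varprojlim V(\Gamma_i)$ and $\varprojlim E(\Gamma_i)$, using directedness to see coherent families are never ``mixed''), builds $s,t,\overline{\phantom e}$ from the compatible families $s\circ\phi_i$ via the universal property of the inverse limit --- which is exactly your coordinatewise formula --- and verifies the graph axioms and uniqueness coordinate by coordinate, including the same argument that $\overline e\ne e$ because $\overline{\phi_i(e)}\ne\phi_i(e)$ in some (hence every) coordinate. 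The only cosmetic difference is that the paper first extends $s,t,\overline{\phantom e}$ to all of each $\Gamma_i$ (as the identity on vertices) so the structure maps live on all of $\Gamma$, rather than defining them only on $E(\Gamma)$ as you do.
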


\begin{proof}
First of all, we claim that for all $i,j\in I$, 
$$
\phi_i^{-1}[V(\Gamma_i)]=\phi_j^{-1}[V(\Gamma_j)].
$$ 
To see this, choose $k\in I$ such that $k\ge i$ and $k\ge j$. Then $\phi_i=\phi_{ik}\phi_k$ and $\phi_j=\phi_{jk}\phi_k$. 
So 
$
\phi_i^{-1}[V(\Gamma_i)]=\phi_k^{-1}[\phi_{ik}^{-1}[V(\Gamma_i)]]
=\phi_k^{-1}[V(\Gamma_k)]
$
as $\phi_{ik}\from\Gamma_k\to\Gamma_i$ is a map of graphs. Similarly, 
$\phi_j^{-1}[V(\Gamma_j)]=\phi_k^{-1}[V(\Gamma_k)]$ and the claim follows. 
Now it also follows that for all $i,j\in I$, 
$$
\phi_i^{-1}[E(\Gamma_i)]=\phi_j^{-1}[E(\Gamma_j)].
$$

For the desired graph structure on $\Gamma$, the vertex and edge sets must be the subsets satisfying:
$$
V(\Gamma)=\phi_i^{-1}[V(\Gamma_i)]\quad\hbox{and}\quad
E(\Gamma)=\phi_i^{-1}[E(\Gamma_i)]
$$
for all $i\in I$.

It remains to see that there is a unique way to define the source, target, and inversion maps so that the $\phi_i$ are maps of graphs. 
We begin by extending the source, target, and inversion maps to functions 
$s,t,\overline{\phantom e}\from\Gamma_i\to\Gamma_i$, whose fixed points are precisely the vertices of $\Gamma_i$, for each $i\in I$.
Then for each $i\in I$, let $s_i=s\phi_i\from\Gamma\to\Gamma_i$.
Note that for $i\le j$, 
$$
\phi_{ij}s_j=\phi_{ij}s\phi_j=s\phi_{ij}\phi_j=s\phi_i=s_i.
$$
So the family of functions $(s_i\from\Gamma\to\Gamma_i)_{i\in I}$ determine a unique function $s\from\Gamma\to\Gamma$ such that $\phi_is=s_i=s\phi_i$ for all $i\in I$.
Similarly, there exist unique functions  
$t,\overline{\phantom e}\from\Gamma\to\Gamma$ such that all $\phi_it=t\phi_i$ and 
$\phi_i\overline{\phantom e}=\overline{\phantom e}\phi_i$. Let us now check that with these maps, $\Gamma$ is a graph. If possible, let $x\in V(\Gamma)\cap E(\Gamma)$. Hence $\forall i\in I, \phi_i(x)\in V(\Gamma_i)\cap E(\Gamma_i)$, which is a contradiction. Also $\forall x\in \Gamma$ and for all $i\in I, \phi_i(x)\in \Gamma_i = V(\Gamma_i)\cup E(\Gamma_i)$. Hence $x\in\phi_i^{-1}(V(\Gamma_i))\cup \phi_i^{-1}(E(\Gamma_i)) = V(\Gamma)\cup E(\Gamma)\subseteq \Gamma$. Thus we get $\Gamma = V(\Gamma)\cup E(\Gamma)$. Now, if possible, let there exist $e\in E(\Gamma)$ such that $e = \overline e$. Hence for all $i$ in $I, \phi_i(e) = \phi_i(\overline e)$ and thus $\phi_i(e) = \overline{\phi_i(e)}$ in $E(\Gamma_i)$ for all $i$ in $I$, a contradiction. Finally, $\phi_i(s(\overline e))=s(\phi_i(\overline e))=s(\overline{\phi_i(e)}) =t(\phi_i(e))=\phi_i(t(e))$ and $\phi_i(t(\overline e))=t(\phi_i(\overline e))=t(\overline{\phi_i(e)}) =s(\phi_i(e))=\phi_i(s(e))$ for all $i$ in $I$. Hence it follows that $s(\overline e) = t(e), t(\overline e) = s(e)$. 
\end{proof}

By the {\it inverse limit of an inverse system $(\Gamma_i,\phi_{ij})$ of cofinite graphs\/}, we will mean the set $\Gamma=\varprojlim\Gamma_i$ endowed with the unique graph structure and the coarsest uniformity such that the canonical maps $\phi_i\from\Gamma\to\Gamma_i$ are uniformly continuous maps of graphs.

\begin{proposition}\label{Inverse Limit}
Let $(\Gamma_i,\phi_{ij})$ be an inverse system of cofinite graphs. Then the inverse limit $\Gamma=\varprojlim\Gamma_i$ is a cofinite graph. 
\end{proposition}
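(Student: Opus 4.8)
The plan is to assemble the three ingredients in the definition of a cofinite graph---an abstract graph structure, a Hausdorff uniformity, and a fundamental system of \emph{compatible} cofinite entourages---from results already established for $\Gamma=\varprojlim\Gamma_i$. The graph structure is furnished by Lemma~\ref{l:Unique graph structure}, and Proposition~\ref{p:Inverse limit of cofinite spaces} shows that $\Gamma$ is a cofinite space whose cofinite entourages of the form $(\phi_i\times\phi_i)^{-1}[R_i]$, with $R_i$ a cofinite entourage of $\Gamma_i$, constitute a fundamental system. Thus only two points remain: that the uniformity is Hausdorff, and that the compatible cofinite entourages alone already form a fundamental system.

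For the Hausdorff property I would observe that the canonical maps $\phi_i\from\Gamma\to\Gamma_i$ form a separating family: if $x\ne y$ in the inverse limit then they differ in some coordinate, so $\phi_i(x)\ne\phi_i(y)$ for some $i$. Since each $\Gamma_i$ is a cofinite graph and hence Hausdorff, the Hausdorff clause of Proposition~\ref{p:Initial uniformity} applies and shows that $\Gamma$ is Hausdorff.

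The heart of the argument is upgrading ``cofinite'' to ``compatible cofinite''. Let $U$ be an arbitrary entourage of $\Gamma$. By the fundamental system from Proposition~\ref{p:Inverse limit of cofinite spaces}, $U$ contains some $(\phi_i\times\phi_i)^{-1}[R_i]$ with $R_i$ a cofinite entourage of $\Gamma_i$. Now I exploit that $\Gamma_i$ is itself a cofinite graph: its compatible cofinite entourages form a fundamental system, so there is a compatible cofinite entourage $S_i\subseteq R_i$ of $\Gamma_i$. Setting $T=(\phi_i\times\phi_i)^{-1}[S_i]$, we have $T\subseteq(\phi_i\times\phi_i)^{-1}[R_i]\subseteq U$, and $T$ is a cofinite entourage of $\Gamma$ by Lemma~\ref{n:Properties of cofinite entourages}(4). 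It then remains to check that $T$ is compatible, which I would verify directly from the three defining conditions, using only that $\phi_i$ is a map of graphs: because $\phi_i$ preserves the vertex/edge partition, sources, targets, and inversion, each condition for $S_i$ on $\Gamma_i$ pulls back to the corresponding condition for $T$ on $\Gamma$. For instance, $(e_1,e_2)\in T$ gives $(\phi_i(e_1),\phi_i(e_2))\in S_i$, hence $(\phi_i(s(e_1)),\phi_i(s(e_2)))=(s(\phi_i(e_1)),s(\phi_i(e_2)))\in S_i$, i.e.\ $(s(e_1),s(e_2))\in T$; and $(e,\overline e)\in T$ would force $(\phi_i(e),\overline{\phi_i(e)})\in S_i$, contradicting compatibility of $S_i$. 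Thus every entourage of $\Gamma$ contains a compatible cofinite entourage, which is exactly what the definition requires.

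The step I expect to be the main obstacle is precisely this last upgrade: Proposition~\ref{p:Inverse limit of cofinite spaces} only delivers cofinite---not compatible---entourages, so the proof hinges on the fact that compatibility is already available on each factor $\Gamma_i$ and is preserved under pullback along the graph morphisms $\phi_i$. Alternatively, once one checks that $\Gamma$ is a uniform topological graph, this pullback of compatibility is the content of the earlier observation on preimages of compatible cofinite equivalence relations; but the direct verification above avoids that detour.
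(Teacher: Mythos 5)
Your proof is correct and follows essentially the same route as the paper: both arguments reduce to replacing cofinite entourages on the factors $\Gamma_i$ by compatible cofinite ones and pulling them back along the canonical maps, compatibility being preserved under preimage by a map of graphs. The only cosmetic difference is that the paper writes a basic entourage of $\Gamma$ as a finite intersection of pullbacks $(\pi_{i_n}\times\pi_{i_n})^{-1}[U_{i_n}]$ and upgrades each $U_{i_n}$ separately, whereas you invoke Proposition~\ref{p:Inverse limit of cofinite spaces} to work with a single pullback $(\phi_i\times\phi_i)^{-1}[R_i]$, which slightly streamlines the same argument.
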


\begin{proof}
It is easy to see that $\Gamma$ is a Hausdorff cofinite space and a graph as well. So it remains to check that the compatible cofinite entourages of $\Gamma$ form a fundamental system of entourages. Without loss of generality $U = (\bigcap_{n = 1}^N(\pi_{i_n}\times \pi_{i_n})^{-1}[U_{i_n}])\cap \Gamma$, where $U_{i_n}$ is an entourage over $\Gamma_{i_n}$ for all $n$. Then each $\Gamma_{i_n}$, being a cofinite graph, there exists a compatible cofinite entourage $R_{i_n}\subseteq U_{i_n}$ for all $n$. Clearly, $$R =(\bigcap_{n = 1}^N(\pi_{i_n}\times \pi_{i_n})^{-1}[R_{i_n}])\cap \Gamma$$ is compatible cofinite entourage over $\Gamma$ and $R\subseteq U$. Hence our claim that $\Gamma$ is a cofinite graph follows. 
\end{proof}

\begin{note} Here we give an alternative representation of the source, target and edge inversion map.
Let $\Gamma$ be as in the above discussion. Let $x = (x_i)_{i\in I}\in \Gamma$. Let us define the source map $s\from \Gamma\to \Gamma$ via $s(x) = (s(x_i))_{i\in I}$. Clearly, $s$ is well defined and for all $i$ in $I, \phi_i(s(e)) = s_i(e)$, as in the previous lemma. Since each $\phi_i = s_i$  and each $s_i$ is uniformly continuous, we obtain by Corollary~\ref{c:Initial uniformity}$, s\from \Gamma\to \Gamma$ is uniformly continuous. Then, using the uniqueness of $s$, the source map we defined here is equal to the one we defined in the last lemma. Similarly, when convenient we will use $t\from \Gamma\to \Gamma$ as $t(x) = (t(x_i))_{i\in I}$ and $\overline{\phantom e}\from \Gamma\to \Gamma$ as $(\overline x) = (\overline {x_i})_{i\in I}$.  
\end{note}


\subsection{Uniform sum of cofinite graphs}
We now apply the construction in Section~\ref{ss:Sums of cofinite spaces} of uniform sum of finitely many cofinite spaces to finitely many cofinite graphs. 
\begin{proposition}
The uniform sum of a finite family of cofinite graphs is a cofinite graph.
\end{proposition}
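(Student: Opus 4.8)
The plan is to realize the uniform sum $\Gamma = \coprod_{i=1}^{n} \Gamma_i$ as an abstract graph in the obvious way and then refine the already-known cofinite-space structure to a cofinite-graph structure. First I would equip $\Gamma$ with its natural graph structure: set $V(\Gamma) = \coprod_{i=1}^n V(\Gamma_i)$ and $E(\Gamma) = \coprod_{i=1}^n E(\Gamma_i)$, and define the source, target, and inversion maps componentwise, so that for an edge $e$ lying in the summand $\Gamma_i$ the elements $s(e), t(e), \overline{e}$ are computed inside $\Gamma_i$. Since the summands are pairwise disjoint, the graph axioms for $\Gamma$ follow immediately from those for each $\Gamma_i$. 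By Proposition~\ref{p:Sums of cofinite spaces}, $\Gamma$ is a cofinite space, and because each $\Gamma_i$ is Hausdorff the intersection of all entourages of $\Gamma$ is the diagonal (two points in different summands are separated automatically, since no basic entourage $\bigcup_k V_k$ meets $\Gamma_i \times \Gamma_j$ for $i \ne j$, and two points in a common summand are already separated there), so $\Gamma$ is Hausdorff. The only remaining task is to establish that the \emph{compatible} cofinite entourages of $\Gamma$ form a fundamental system.

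For this, let $U$ be an arbitrary entourage of $\Gamma$. By the definition of the uniform sum, $U$ contains an entourage of the form $\bigcup_{i=1}^n V_i$, where each $V_i$ is an entourage of $\Gamma_i$. Now I would invoke the hypothesis that each $\Gamma_i$ is a cofinite graph: this yields, for each $i$, a compatible cofinite entourage $R_i$ of $\Gamma_i$ with $R_i \subseteq V_i$. Setting
$$
R = \bigcup_{i=1}^n R_i,
$$
the set $R$ is again an entourage of the uniform sum, and $R \subseteq \bigcup_{i=1}^n V_i \subseteq U$.

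It remains to verify that $R$ is a compatible cofinite entourage, and here the guiding observation is that, because the summands are pairwise disjoint, every pair belonging to $R$ lies inside a single summand $\Gamma_i$; consequently each required property of $R$ collapses to the corresponding property of the individual $R_i$. Concretely, $R$ is reflexive because the diagonal of $\Gamma$ is the union of the diagonals of the $\Gamma_i$, and it is symmetric and transitive because each $R_i$ is and no related pair crosses between summands. The equivalence classes of $R$ are precisely the classes of the various $R_i$, each of which is open in $\Gamma_i$ and hence open in the clopen summand $\Gamma_i \subseteq \Gamma$, so $R$ is co-discrete; and the total number of classes is the finite sum $\sum_{i=1}^n |\Gamma_i / R_i|$, so $R$ is cofinite. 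For compatibility, the splitting $R = R_V \cup R_E$ with $R_V = \bigcup_i (R_i)_V$ and $R_E = \bigcup_i (R_i)_E$ is inherited from the splittings of the $R_i$; the implications $(e_1,e_2)\in R \Rightarrow (s(e_1),s(e_2)),\, (t(e_1),t(e_2)),\, (\overline{e_1},\overline{e_2}) \in R$ hold because $e_1, e_2$ then lie in a common $\Gamma_i$, where source, target, and inversion are computed and where $R_i$ is compatible; and $(e,\overline{e}) \notin R$ for every edge $e$ because $e$ and $\overline{e}$ share a summand $\Gamma_i$, in which $(e,\overline{e}) \notin R_i$. Thus $R$ is a compatible cofinite entourage contained in $U$, proving the claim.

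I do not expect a genuine obstacle; the content is entirely in the bookkeeping. The one point deserving care is the verification that $R = \bigcup_i R_i$ is an equivalence relation: transitivity could in principle fail if $R$ related points across summands, so the argument rests squarely on the disjointness of the summands, which forces every related pair to live in a single $\Gamma_i$. Once that is noted, both cofiniteness (finitely many classes, each clopen) and the three compatibility conditions reduce mechanically to the assumed properties of the $R_i$.
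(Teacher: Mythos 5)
Your proof is correct and follows essentially the same route as the paper: reduce an arbitrary entourage to one of the basic form $\bigcup_i V_i$, choose a compatible cofinite entourage $R_i\subseteq V_i$ in each summand using the hypothesis on $\Gamma_i$, and check that $R=\bigcup_i R_i$ is a compatible cofinite entourage contained in the given one. The only difference is one of detail: the paper dismisses the verification that $R$ is a compatible cofinite entourage (and the graph/Hausdorff structure on the sum) with ``clearly,'' whereas you spell out the bookkeeping, correctly identifying disjointness of the summands as the reason everything reduces to the individual $R_i$.
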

\begin{proof}
To begin with, let $(\Gamma_i)_{i\in I}$ be a \textbf{finite} family of cofinite graphs. The  uniform sum of this family $\Gamma=\coprod_{i\in I}\Gamma_i$ has both the structure of a cofinite space and a graph. It only remains to check that $\Gamma$ has a fundamental system of compatible cofinite entourages. Without loss of generality let $U = \bigcup_{i\in I} U_i$ be a cofinite entourage over $\Gamma$. Hence $U_i$ is a cofinite entourage over $\Gamma_i$ for all $i$. But each $\Gamma_i$ is cofinite so there exists a compatible cofinite entourage $R_i\subseteq U_i$. Clearly, $R = \bigcup_{i\in I}R_i$ is a compatible cofinite entourage over $\Gamma$ and $R\subseteq U$. 
\end{proof}

Alternatively, one may define $V(\Gamma)=\coprod_{i\in I}V(\Gamma_i), E(\Gamma)=\coprod_{i\in I}E(\Gamma_i)$. Clearly, $\Gamma = V(\Gamma)\coprod E(\Gamma)$. Also let us define $s\from E(\Gamma)\to V(\Gamma)$ via $s|_{E(\Gamma_i)} = s\from E(\Gamma_i)\to V(\Gamma_i)$. Then, by Lemma~\ref{uniform continuous maps and sum}, $s$ is uniformly continuous, as each restriction $s|_{E(\Gamma_i)}$ is uniformly continuous. Similarly $t, \overline{\phantom e}$ are uniformly continuous as well. Also we make a note of the fact that a uniform sum of uniform spaces also respects their topological structures by being the topological sum of themselves. In particular, each uniform summand is a clopen subgraph of the uniform sum graph.


\subsection{Uniform quotient graphs}

Next we apply the construction in Section~\ref{ss:Quotients of cofinite spaces} of uniform quotient spaces to cofinite graphs. 
Let $\Gamma$ be a cofinite graph and let $K$ be a compatible equivalence relation on $\Gamma$. Then the uniform quotient space $\Gamma/\!/K$ of $\Gamma$ modulo $K$ has both the structure of a cofinite space and a graph. We show that these two structures combine to make $\Gamma/\!/K$ into a cofinite graph, provided that it is Hausdorff. 

It remains to say that for each compatible cofinite entourage $R$ of $\Gamma$ with $K\subseteq R$, $(q\times q)[R]$ is compatible, where $q\from \Gamma\to\Gamma/K$ is the quotient map. Let $(K[x],K[y])\in (q\times q)[R]$. This implies that there exists $(u,v)\in R$ such that $q(x) = q(u)$ and $q(y) = q(v)$. Thus $(x,u), (v,y)$ is in $K\subseteq R$. So $(x,y)\in R$. So $(K[x], K[y])$ belongs to $[(q\times q)[R]\cap (V(\Gamma/K)\times V(\Gamma/K))]\dot{\bigcup}[(q\times q)[R]\cap (E(\Gamma/K)\times E(\Gamma/K))]$. Let $(K[e_1],K[e_2])\in (q\times q)[R]$ for $(e_1, e_2)\in E(\Gamma)\times E(\Gamma)$. As $(e_1,e_2)$ is in $R$ and $R$ is a compatible cofinite entourage, we observe that $(s(e_1),s(e_2))$, $(t(e_1),t(e_2))$ and $(\overline{e_1},\overline{e_2})$ are all in $R$. Hence the following $(s(K[e_1]),s(K[e_2])), (t(K[e_1]),t(K[e_2])), (\overline{K[e_1]},\overline{K[e_2]})\in (q\times q)[R]$. Finally, if possible, let $(K[e],\overline {K[e]})\in (q\times q)[R]$. Then as above $(e,\overline e)\in R$, a contradiction. Thus our claim follows.
\begin{proposition}
Let $\Gamma$ be a cofinite graph and $K$ a compatible equivalence relation on $\Gamma$ that satisfies the equivalent  conditions of~\ref{p:Hausdorff quotient}. Then the uniform quotient graph $\Gamma/\!/K$ is a cofinite graph.
\end{proposition}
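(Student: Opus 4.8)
The plan is to verify the one condition of Definition~\ref{d:Cofinite graph} that is not already in hand, namely that the \emph{compatible} cofinite entourages of $\Gamma/\!/K$ form a fundamental system of entourages; Hausdorffness is given, since $K$ satisfies the equivalent conditions of Proposition~\ref{p:Hausdorff quotient}. Write $q\from\Gamma\to\Gamma/\!/K$ for the canonical map and let $I$ be the filter base of cofinite entourages of $\Gamma$. By construction of the quotient uniformity the sets $(q\times q)[R]$, with $R\in I$ and $K\subseteq R$, already form a fundamental system of cofinite entourages of $\Gamma/\!/K$, and the paragraph preceding the statement shows that whenever such an $R$ is in addition compatible, $(q\times q)[R]$ is a \emph{compatible} cofinite entourage. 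Since $(q\times q)[\,\cdot\,]$ is monotone, everything reduces to the following cofinality claim on $\Gamma$ itself: for every $R\in I$ with $K\subseteq R$ there is a compatible cofinite entourage $S$ of $\Gamma$ with $K\subseteq S\subseteq R$. Granting this, each basic entourage $(q\times q)[R]$ of $\Gamma/\!/K$ contains the compatible cofinite entourage $(q\times q)[S]$, so the compatible ones are cofinal and $\Gamma/\!/K$ is a cofinite graph.

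To produce such an $S$ I would rerun the construction from the proof of Theorem~\ref{t:cofinal} (equivalently Corollary~\ref{c:orientation preserving}) on $R$, arranged so that $K$ is not destroyed. Fix a closed orientation $E^+(\Gamma)$ of $\Gamma$ for which $K$ is orientation preserving, extend $s,t,\overline{\phantom e}$ to uniformly continuous self-maps of $\Gamma$ whose fixed points are exactly the vertices, and set $S_1=(s\times s)^{-1}[R]$, $S_2=(t\times t)^{-1}[R]$, $S_3=(\overline{\phantom e}\times\overline{\phantom e})^{-1}[R]$, $S_4=R\cap S_1\cap S_2\cap S_3$, and
\[
S=S_4\cap\bigl[(V(\Gamma)\times V(\Gamma))\cup(E^+(\Gamma)\times E^+(\Gamma))\cup(E^-(\Gamma)\times E^-(\Gamma))\bigr].
\]
By Theorem~\ref{correspondence} each $S_i$ is a cofinite entourage (the three maps being uniformly continuous), hence so is $S_4$, and the final intersection with the three clopen blocks makes $S$ a compatible cofinite entourage with $S\subseteq R$, exactly as in Theorem~\ref{t:cofinal}. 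The one new point is $K\subseteq S$: since $K$ is compatible, any $(x,y)\in K$ gives $(s(x),s(y)),(t(x),t(y)),(\overline x,\overline y)\in K\subseteq R$, so $K\subseteq S_1\cap S_2\cap S_3$ and thus $K\subseteq S_4$; and since $K$ is orientation preserving, each $K$-related pair lies inside a single one of the three blocks and so survives the last intersection. Hence $K\subseteq S$, which completes the cofinality claim.

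The main obstacle is the hypothesis I helped myself to at the start of the second paragraph: the existence of a closed orientation of $\Gamma$ for which the prescribed $K$ is orientation preserving. Compatibility of $K$ gives $(e,\overline e)\notin K$ for every edge, so the classes of $K$ on $E(\Gamma)$ fall into inversion-pairs $\{C,\overline C\}$ with $C\neq\overline C$, and a $K$-orientation-preserving orientation is precisely a clopen, $K$-saturated transversal choosing one class from each pair (equivalently, the pullback along $q$ of a closed orientation of the quotient graph). When $\Gamma$ is compact---the case of profinite graphs---such a transversal exists at once by total disconnectedness together with compactness, and $K$ need not even be cofinite; the delicate point in the general cofinite setting is to make the selection clopen over possibly infinitely many pairs, for which I would work through the filter base $I$ and use that $E(\Gamma)$ is the uniform sum $E^+(\Gamma)\sqcup E^-(\Gamma)$, so that intersecting an entourage with the blocks keeps it inside the uniformity and the block relation above is a genuine entourage rather than merely a co-discrete equivalence relation. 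Once the orientation is secured, the argument above goes through verbatim and, together with the given Hausdorffness, shows $\Gamma/\!/K$ to be a cofinite graph.
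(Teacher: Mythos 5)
Your first paragraph is correct, and it is already more than the paper itself records: the paper's entire proof consists of the verification (in the paragraph preceding the proposition) that $(q\times q)[R]$ is compatible whenever $R$ is a \emph{compatible} cofinite entourage of $\Gamma$ containing $K$, and it silently assumes exactly the cofinality statement you isolated, namely that every cofinite entourage $R\supseteq K$ of $\Gamma$ contains a compatible cofinite entourage $S$ with $K\subseteq S\subseteq R$. Your second paragraph settles that statement correctly \emph{provided} the seed you ask for exists; note that asking for a closed $K$-preserving orientation whose block relation $(V(\Gamma)\times V(\Gamma))\cup(E^+(\Gamma)\times E^+(\Gamma))\cup(E^-(\Gamma)\times E^-(\Gamma))$ is an entourage is equivalent to asking for a single compatible cofinite entourage $S^*$ of $\Gamma$ with $K\subseteq S^*$: given $S^*$, pull back an orientation of the finite graph $\Gamma/S^*$, and the block relation then contains $S^*$, so it is an entourage and your intersection stays inside the uniformity. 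So your proof, and the paper's, both stand or fall with the obstacle you flagged at the end.

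That obstacle is a genuine gap, and it cannot be closed from the stated hypotheses; in fact the proposition as literally stated is false. Take $\Gamma$ with vertices $u_n,v_n$ and loops $a_n$ at $u_n$, $b_n$ at $v_n$ $(n\in\N)$, together with their inverses. For $N\in\N$ let $S_N$ be the equivalence relation whose only nontrivial classes are $\{u_m,v_m\mid m\ge N\}$, $\{a_m,b_m\mid m\ge N\}$, $\{\overline a_m,\overline b_m\mid m\ge N\}$; each $S_N$ is a compatible cofinite equivalence relation and $\{S_N\mid N\in\N\}$ is a separating chain, hence a fundamental system of entourages making $\Gamma$ a cofinite graph (the induced topology is discrete, so $\Gamma$ is not compact). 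Let $K$ be the compatible equivalence relation with nontrivial classes $\{u_n,v_n\}$, $\{a_n,\overline b_n\}$, $\{\overline a_n,b_n\}$. Any cofinite entourage $R\supseteq K$ contains some $S_N$, hence contains $J_N=\langle K\cup S_N\rangle$, and $(a_m,\overline a_m)\in J_N$ for every $m\ge N$, since $(a_m,b_m)\in S_N$ and $(b_m,\overline a_m)\in K$; thus \emph{no} compatible cofinite entourage of $\Gamma$ contains $K$, and every entourage of $\Gamma/\!/K$, containing some $(q\times q)[J_N]$, relates the edge $q(a_m)$ to its own inverse, so $\Gamma/\!/K$ has no compatible cofinite entourages whatsoever and is not a cofinite graph. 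Nevertheless $K$ does satisfy condition (2) of Proposition~\ref{p:Hausdorff quotient}: each $J_N$ is itself a cofinite entourage containing $K$, and $\bigcap_N J_N=K$ because a pair whose indices are both $<N$ lies in $J_N$ if and only if it lies in $K$; hence $\bigcap\{R\in I\mid K\subseteq R\}=K$. Note also that in this example closed $K$-preserving orientations exist in abundance (the topology is discrete), but none of their block relations is an entourage --- confirming that the delicate point is exactly the one you identified, uniformity rather than closedness. The upshot: your diagnosis exposes an error in the paper itself; the proposition becomes true if one adds the hypothesis that $K$ is contained in at least one compatible cofinite entourage (equivalently, that the compatible cofinite entourages containing $K$ intersect in $K$), and under that hypothesis your argument, unlike the paper's as written, is a complete proof.
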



\section{Completions of Cofinite Graphs}

\begin{theorem}\label{t:completion}
Let $\Gamma$ be a cofinite graph contained as a dense subgraph in a compact Hausdorff topological graph $\overline\Gamma$. Then given any compact Hausdorff topological graph $\Delta$ and any uniformly continuous map of graphs $\varphi\from\Gamma\to\Delta$, 
\begin{enumerate}
\item $\overline{V(\Gamma)} = V(\overline {\Gamma})$ and $\overline{E(\Gamma)} = E(\overline{\Gamma})$
\item there exists a unique continuous map of graphs $\overline\varphi\from\overline\Gamma\to\Delta$ extending $\varphi$. 
\end{enumerate}
\end{theorem}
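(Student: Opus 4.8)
The plan is to prove part~(1) first and then use it as the engine for part~(2), where almost everything reduces to ``two continuous maps agreeing on a dense set'' arguments.

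For part~(1), I would begin from the observation that in any topological graph the vertex and edge sets are \emph{clopen}: by definition they are closed and partition the space, so each is the complement of the other. In particular $V(\overline\Gamma)$ and $E(\overline\Gamma)$ are clopen in $\overline\Gamma$. Since $\Gamma$ is a subgraph, $V(\Gamma)\subseteq V(\overline\Gamma)$ and $E(\Gamma)\subseteq E(\overline\Gamma)$, and because the ambient sets are closed this gives $\overline{V(\Gamma)}\subseteq V(\overline\Gamma)$ and $\overline{E(\Gamma)}\subseteq E(\overline\Gamma)$. Density of $\Gamma$ then yields
$$
V(\overline\Gamma)\cup E(\overline\Gamma)=\overline\Gamma=\overline{V(\Gamma)\cup E(\Gamma)}=\overline{V(\Gamma)}\cup\overline{E(\Gamma)},
$$
and intersecting with $V(\overline\Gamma)$ while using $V(\overline\Gamma)\cap E(\overline\Gamma)=\emptyset$ (so that $V(\overline\Gamma)\cap\overline{E(\Gamma)}=\emptyset$) forces $V(\overline\Gamma)=\overline{V(\Gamma)}$; the edge case is symmetric. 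The payoff I will invoke repeatedly is that $V(\Gamma)$ is dense in $V(\overline\Gamma)$ and $E(\Gamma)$ is dense in $E(\overline\Gamma)$.

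For the existence in part~(2), the essential input is completeness of the target: since $\Delta$ is compact Hausdorff it is complete in its unique compatible uniformity, and $\Gamma$ sits inside $\overline\Gamma$ as a dense uniform subspace. Hence the uniformly continuous map $\varphi$ extends to a continuous map $\overline\varphi\from\overline\Gamma\to\Delta$ in the standard way. Concretely, for $x\in\overline\Gamma$ I would pick a net $(a_\lambda)$ in $\Gamma$ with $a_\lambda\to x$; being convergent it is Cauchy, so uniform continuity makes $\bigl(\varphi(a_\lambda)\bigr)$ a Cauchy net in $\Delta$, which converges by completeness, and I set $\overline\varphi(x)$ equal to that limit. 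Well-definedness and continuity follow from uniform continuity together with the Hausdorff property of $\Delta$ via the usual interleaving-of-nets argument, and on $\Gamma$ the extension agrees with $\varphi$.

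It remains to check that $\overline\varphi$ is a map of graphs, and here part~(1) does the work. Because $V(\Delta)$ is closed in $\Delta$ and $\varphi(V(\Gamma))\subseteq V(\Delta)$, continuity gives $\overline\varphi(V(\overline\Gamma))=\overline\varphi(\overline{V(\Gamma)})\subseteq\overline{\varphi(V(\Gamma))}\subseteq V(\Delta)$, so vertices map to vertices; the same computation with $E$ shows edges map to edges. To see that source, target, and inversion are preserved, I note that $\overline\varphi\circ s$ and $s\circ\overline\varphi$ are both continuous maps $E(\overline\Gamma)\to V(\Delta)$ agreeing on the dense subset $E(\Gamma)$ (since $\varphi$ is a map of graphs), and $V(\Delta)$ is Hausdorff, so they coincide on all of $E(\overline\Gamma)$; the identical argument treats $t$ and $\overline{\phantom e}$. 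Uniqueness is then immediate, as any two continuous extensions agree on the dense set $\Gamma$ and land in the Hausdorff space $\Delta$. The one genuine obstacle is securing the continuous extension in part~(2): everything downstream is a routine dense-set/closed-set argument, whereas producing $\overline\varphi$ itself requires both the completeness of $\Delta$ and the fact that $\Gamma$ carries the subspace uniformity from $\overline\Gamma$ (so that convergent nets in $\overline\Gamma$ restrict to Cauchy nets on which $\varphi$ can act).
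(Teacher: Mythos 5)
Your proposal is correct and follows essentially the same route as the paper: part (1) by a density argument using that $V(\overline\Gamma)$, $E(\overline\Gamma)$ are closed and partition $\overline\Gamma$, and part (2) by invoking completeness of the compact Hausdorff space $\Delta$ to extend $\varphi$, then verifying the graph-map conditions from density. The only cosmetic difference is that where the paper verifies $s(\overline\varphi(e))=\overline\varphi(s(e))$ (and likewise for $t$, $\overline{\phantom e}$) by explicit net-limit computations, you use the equivalent observation that two continuous maps into a Hausdorff space agreeing on a dense subset coincide.
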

\begin{proof}
\begin{enumerate}
\item Let $v\in V(\overline{\Gamma}), U$ be an open set in $V(\overline{\Gamma})$ containing $v$. Since $\Gamma$ is dense in $\overline{\Gamma}, U\cap\Gamma\neq\emptyset$. Let $w\in U\cap \Gamma$. Since $U\subseteq V(\overline{\Gamma}), w\in V(\Gamma)$. Thus $w\in U\cap  V(\Gamma)$ So $V(\overline{\Gamma}) = \overline{V(\Gamma)}^{V(\overline{\Gamma})}$, the closure of $V(\Gamma)$ in $V(\overline{\Gamma})$. But $\overline{V(\Gamma)}^{V(\overline{\Gamma})} = \overline{V(\Gamma)}$, the closure of $V(\Gamma)$ in $\overline{\Gamma}$. Similarly, $\overline{E(\Gamma)} = E(\overline{\Gamma})$. 

\item Since $\Delta$ is compact, Hausdorff it is a complete uniform space as well. Then there exists a unique uniformly continuous map $\overline {\varphi}\from \overline{\Gamma}\to \Delta$ such that $\overline{\varphi}|_{\Gamma} = \varphi$. So it remains to check that $\overline {\varphi}$ is a map of graphs. 

Let $v\in V(\overline{\Gamma}) = \overline{V(\Gamma)}$. Then there exists a net $\{v_{\alpha}\}_{\alpha\in A}$ in $V(\Gamma)$ such that $\lim_{\alpha\in A}v_{\alpha} = v$. Hence $\overline{\varphi}(v) = \overline{\varphi}(\lim_{\alpha\in A}v_{\alpha}) = \lim_{\alpha\in A}\overline{\varphi}(v_{\alpha}) = \lim_{\alpha\in A}\varphi(v_{\alpha})\in V(\Delta)$, as $\varphi$ is a map of graphs and $V(\Delta)$ is closed in $\Delta$. Similarly, one can show that for all $e$ in $E(\overline{\Gamma}), \overline{\varphi}(e)\in E(\Delta)$.

Let $e\in E(\overline{\Gamma}) = \overline{E(\Gamma)}$. Then there exists a net $\{e_{\alpha}\}_{\alpha\in A}$ in $E(\Gamma)$ such that $\lim_{\alpha\in A}e_{\alpha} = e$. So, $s(\overline{\varphi}(e)) = s(\overline{\varphi}(\lim_{\alpha\in A}e_{\alpha}))$
$= s(\lim\overline{\varphi}(e_{\alpha})) = s(\lim\varphi(e_{\alpha})) = \lim s(\varphi(e_{\alpha})) = \lim\varphi(s(e_{\alpha}))$
$= \lim\overline{\varphi}(s(e_{\alpha})) = \overline{\varphi}(\lim s(e_{\alpha})) = \overline{\varphi}(s(\lim e_{\alpha})) = \overline{\varphi}(s(e))$. Similarly, $t(\overline{\varphi}(e)) = \overline{\varphi}(t(e))$. Now $\overline{\overline{\varphi}(e)} = \overline{\overline{\varphi}(\lim_{\alpha\in A}e_{\alpha})} = \overline{\lim_{\alpha\in A}\overline{\varphi}(e_{\alpha})}$\\
$= \overline{\lim_{\alpha\in A}\varphi(e_{\alpha})} = \lim_{\alpha\in A}\overline{\varphi(e_{\alpha})} = \lim_{\alpha\in A}\varphi(\overline{e_{\alpha}}) = \lim_{\alpha\in A}\overline{\varphi}(\overline{e_{\alpha}})$\\
$= \overline{\varphi}(\lim_{\alpha\in A}\overline{e_{\alpha}}) = \overline{\varphi}(\overline{\lim_{\alpha\in A}e_{\alpha}}) = \overline{\varphi}(\overline e)$. Thus $\overline{\varphi}$ is a map of graphs.    
\end{enumerate}
\end{proof}
\begin{corollary}\label{completion of maps}
As in the previous theorem $\overline{\varphi}(\overline{\Gamma}) = \overline{\varphi(\Gamma)}$.
\end{corollary}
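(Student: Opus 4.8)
The plan is to establish the two inclusions separately, using two familiar features of the situation: the continuity of $\overline{\varphi}$ together with the density of $\Gamma$ for one direction, and the compactness of $\overline{\Gamma}$ together with the Hausdorff property of $\Delta$ for the other. Throughout I would write $\overline{\varphi}$ for the continuous map of graphs produced in Theorem~\ref{t:completion}, so that $\overline{\varphi}|_{\Gamma}=\varphi$ and in particular $\overline{\varphi}(\Gamma)=\varphi(\Gamma)$.

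For the inclusion $\overline{\varphi}(\overline{\Gamma})\subseteq\overline{\varphi(\Gamma)}$, I would first invoke the hypothesis that $\Gamma$ is dense in $\overline{\Gamma}$, so that the closure of $\Gamma$ inside $\overline{\Gamma}$ is all of $\overline{\Gamma}$. Then I would apply the standard fact that a continuous map sends the closure of a set into the closure of its image: since $\overline{\varphi}$ is continuous, $\overline{\varphi}(\overline{\Gamma})=\overline{\varphi}(\overline{\Gamma})\subseteq\overline{\overline{\varphi}(\Gamma)}=\overline{\varphi(\Gamma)}$, where the closures on the right are taken in $\Delta$. At the level of nets this is immediate: any point of $\overline{\Gamma}$ is the limit of a net in $\Gamma$, and $\overline{\varphi}$ carries that net to a net in $\varphi(\Gamma)$ converging to its image.

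For the reverse inclusion I would use compactness. Since $\overline{\Gamma}$ is compact and $\overline{\varphi}$ is continuous, the image $\overline{\varphi}(\overline{\Gamma})$ is a compact subset of $\Delta$; because $\Delta$ is Hausdorff, this image is therefore closed in $\Delta$. Now $\varphi(\Gamma)=\overline{\varphi}(\Gamma)\subseteq\overline{\varphi}(\overline{\Gamma})$, and taking closures of both sides while noting that the larger set is already closed yields $\overline{\varphi(\Gamma)}\subseteq\overline{\varphi}(\overline{\Gamma})$. Combining the two inclusions gives the claimed equality.

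I do not anticipate a genuine obstacle here, since the statement is a direct consequence of the topological properties already assembled in Theorem~\ref{t:completion}; the only point requiring any care is to keep the two uses of ``closure'' bookkept correctly, namely the closure of $\Gamma$ taken in $\overline{\Gamma}$ (which is all of $\overline{\Gamma}$ by density) versus the closures of $\varphi(\Gamma)$ and $\overline{\varphi}(\overline{\Gamma})$ taken in $\Delta$. Everything else is the compact--to--Hausdorff closed-image argument together with the elementary relation $\overline{\varphi}(\overline{A})\subseteq\overline{\overline{\varphi}(A)}$ for continuous maps.
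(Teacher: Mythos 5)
Your proposal is correct and follows essentially the same route as the paper: density of $\Gamma$ plus continuity of $\overline{\varphi}$ for the inclusion $\overline{\varphi}(\overline{\Gamma})\subseteq\overline{\varphi(\Gamma)}$, and the compact-image-in-Hausdorff argument for the reverse inclusion. If anything, your write-up is slightly more careful than the paper's, which loosely asserts $\overline{\varphi}(\overline{\Gamma})=\overline{\overline{\varphi}(\Gamma)}$ directly from continuity before giving the compactness argument that actually justifies the second inclusion.
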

\begin{proof}
The closure of $\Gamma$ is $\overline{\Gamma}$ and $\overline {\varphi}$ is continuous. So $\overline {\varphi}(\overline{\Gamma}) = \overline{\overline {\varphi}(\Gamma)}$\\
$= \overline{\varphi(\Gamma)}$.

On the other hand, since $\overline{\Gamma}$ is compact and $\overline{\varphi}$ is uniformly continuous, $\overline{\varphi}(\overline{\Gamma})$ is a compact subset of the Hausdorff space $\Delta$ and hence is closed. Now $\varphi(\Gamma) = \overline{\varphi}(\Gamma)\subseteq \overline{\varphi}(\overline{\Gamma})$. Thus $\overline{\varphi(\Gamma)}\subseteq \overline{\overline{\varphi}(\overline{\Gamma})} = \overline{\varphi}(\overline{\Gamma})$.  
\end{proof}       
In light of Theorem~\ref{t:completion} we make the following definition. 

\begin{definition}[Completion]\rm
Let $\Gamma$ be a cofinite graph. Then any compact Hausdorff topological graph $\overline\Gamma$ that contains $\Gamma$ as a dense subgraph is called a 
{\it completion\/} of $\Gamma$.
\end{definition}

\begin{corollary}[Uniqueness of completions]
The completion of a cofinite graph $\Gamma$ is unique up to an isomorphism extending the identity map on $\Gamma$.
\end{corollary}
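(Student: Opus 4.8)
The plan is to deduce uniqueness purely formally from the universal extension property established in Theorem~\ref{t:completion}, exactly as one proves uniqueness of completions for uniform spaces or of profinite completions of groups. Suppose $\overline\Gamma_1$ and $\overline\Gamma_2$ are two completions of $\Gamma$, and let $\iota_k\from\Gamma\to\overline\Gamma_k$ denote the inclusion maps for $k=1,2$. Since $\Gamma$ carries the uniformity induced from each $\overline\Gamma_k$ (it is a dense uniform subgraph), each $\iota_k$ is a uniformly continuous map of graphs. First I would apply Theorem~\ref{t:completion} with $\overline\Gamma=\overline\Gamma_1$, $\Delta=\overline\Gamma_2$, and $\varphi=\iota_2$, obtaining a unique continuous map of graphs $f\from\overline\Gamma_1\to\overline\Gamma_2$ with $f|_\Gamma=\iota_2$; that is, $f$ fixes $\Gamma$ pointwise. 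Symmetrically, taking $\overline\Gamma=\overline\Gamma_2$, $\Delta=\overline\Gamma_1$, and $\varphi=\iota_1$ yields a continuous map of graphs $g\from\overline\Gamma_2\to\overline\Gamma_1$ fixing $\Gamma$ pointwise.

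Next I would show that $f$ and $g$ are mutually inverse by invoking the uniqueness clause of Theorem~\ref{t:completion}. The composite $g\circ f\from\overline\Gamma_1\to\overline\Gamma_1$ is a continuous map of graphs whose restriction to $\Gamma$ is the identity, hence it extends $\iota_1$; but $\id_{\overline\Gamma_1}$ is likewise a continuous map of graphs extending $\iota_1$. Applying Theorem~\ref{t:completion} with $\overline\Gamma=\Delta=\overline\Gamma_1$ and $\varphi=\iota_1$, the extension is unique, so $g\circ f=\id_{\overline\Gamma_1}$. The same argument gives $f\circ g=\id_{\overline\Gamma_2}$.

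Finally, since $f$ is a bijection of graphs with continuous two-sided inverse $g$, it is simultaneously a homeomorphism and a graph isomorphism, and by construction it restricts to the identity on $\Gamma$. This is precisely the required isomorphism, so the completion is unique up to an isomorphism extending $\id_\Gamma$.

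I do not expect a genuine obstacle here, as Theorem~\ref{t:completion} carries out all of the analytic work. The only points demanding care are bookkeeping ones: confirming that each inclusion $\iota_k$ really is a uniformly continuous map of graphs, so that Theorem~\ref{t:completion} applies; and applying the uniqueness half of that theorem to the composites $g\circ f$ and $f\circ g$, which need not be uniformly continuous but are certainly continuous maps of graphs. This is legitimate precisely because Theorem~\ref{t:completion} asserts uniqueness among \emph{all} continuous maps of graphs extending the given map, not merely among the uniformly continuous ones.
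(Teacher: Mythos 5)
Your proof is correct and follows essentially the same route as the paper: both arguments use Theorem~\ref{t:completion} to produce maps in each direction extending $\id_\Gamma$, then invoke the uniqueness clause of that theorem to conclude the two composites equal the respective identity maps, so the extensions are mutually inverse isomorphisms. Your added remark that uniqueness holds among all continuous (not just uniformly continuous) extensions is a sound piece of bookkeeping that the paper leaves implicit.
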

\begin{proof}
If possible, let $\Gamma_i$ be two completions of a cofinite graph $\Gamma$, for $i = 0, 1$. Then the following diagram commutes for unique choices of uniformly continuous maps of graphs $f_{i+1}\from\Gamma_i\to\Gamma_{i+1}$, for $i = 0, 1$ mod $2$,  where $id_{\Gamma}$ is the identity map on $\Gamma$ and $i_{\Gamma_i}$ is the canonical inclusion map,  for $i = 0, 1$. 

$$\begindc{\commdiag}[60]
\obj(0,0)[G1]{$\Gamma$}
\obj(10,0)[G2]{$\Gamma$}
\obj(20,0)[G3]{$\Gamma$}
\obj(30,0)[G4]{$\Gamma$}
\obj(0,10)[G5]{$\Gamma_0$}
\obj(10,10)[G6]{$\Gamma_1$}
\obj(20,10)[G7]{$\Gamma_0$}
\obj(30,10)[G8]{$\Gamma_1$}
\mor{G1}{G2}{$id_{\Gamma}$}
\mor{G2}{G3}{$id_{\Gamma}$}
\mor{G3}{G4}{$id_{\Gamma}$}
\mor{G1}{G5}{$i_{\Gamma_0}$}
\mor{G2}{G6}{$i_{\Gamma_1}$}
\mor{G3}{G7}{$i_{\Gamma_0}$}
\mor{G4}{G8}{$i_{\Gamma_1}$}
\mor{G5}{G6}{$f_1$}
\mor{G6}{G7}{$f_0$}
\mor{G7}{G8}{$f_1$}
\enddc$$

But then we also have the following commutative diagrams.
$$\begindc{\commdiag}[60]
\obj(0,0)[G1]{$\Gamma$}
\obj(10,0)[G2]{$\Gamma$}
\obj(20,0)[G3]{$\Gamma$}
\obj(30,0)[G4]{$\Gamma$}
\obj(0,10)[G5]{$\Gamma_0$}
\obj(10,10)[G6]{$\Gamma_0$}
\obj(20,10)[G7]{$\Gamma_1$}
\obj(30,10)[G8]{$\Gamma_1$}
\mor{G1}{G2}{$id_{\Gamma}$}
\mor{G3}{G4}{$id_{\Gamma}$}
\mor{G1}{G5}{$i_{\Gamma_0}$}
\mor{G2}{G6}{$i_{\Gamma_0}$}
\mor{G3}{G7}{$i_{\Gamma_1}$}
\mor{G4}{G8}{$i_{\Gamma_1}$}
\mor{G5}{G6}{$id_{\Gamma_0}$}
\mor{G7}{G8}{$id_{\Gamma_1}$}
\enddc$$
where $id_{\Gamma_i}$ is the identity map, for $i= 1, 2$.

Thus by Theorem~\ref{t:completion}, $f_0\circ f_1 = id_{\Gamma_0}$ and $f_1\circ f_0 = id_{\Gamma_1}$. Hence $f_0$ and $f_1$ are inverses of each other. 
\end{proof}

\begin{theorem}[Existence of completions]\label{Existence}
Let $\Gamma$ be a cofinite graph and let $I$ be a fundamental system of compatible cofinite entourages of  $\Gamma$, directed by the reverse inclusion. 
Then the inverse limit $\widehat\Gamma=\varprojlim\Gamma/R$ $(R\in I)$ is a compact Hausdorff topological graph and the natural map $\Gamma\to\widehat\Gamma$ embeds $\Gamma$ as a dense subgraph of $\widehat\Gamma$.
\end{theorem}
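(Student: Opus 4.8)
The plan is to realize $\widehat\Gamma$ as an inverse limit of finite discrete cofinite graphs and then invoke the machinery already assembled. First I would record that for each compatible cofinite entourage $R\in I$ the quotient $\Gamma/R$ is a finite discrete graph: finiteness and discreteness come from $R$ being a cofinite equivalence relation, and the well-defined source, target, and inversion maps come from $R$ being compatible. For $R,S\in I$ with $S\subseteq R$, the assignment $S[x]\mapsto R[x]$ gives a well-defined map of graphs $\phi_{RS}\from\Gamma/S\to\Gamma/R$, automatically uniformly continuous since the spaces are finite discrete; these satisfy the cocycle conditions, so $(\Gamma/R,\phi_{RS})_{R\in I}$ is an inverse system of cofinite graphs (the set $I$ is directed by reverse inclusion precisely because it is a filter base of entourages). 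By Proposition~\ref{Inverse Limit}, $\widehat\Gamma=\varprojlim\Gamma/R$ is then a cofinite graph, hence in particular Hausdorff; and it is compact, being a closed subspace of the product $\prod_{R\in I}\Gamma/R$ of finite discrete (so compact Hausdorff) spaces. As a compact cofinite graph it is a profinite graph, and thus a compact Hausdorff topological graph, which settles the first assertion.

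Next I would construct the natural map. The quotient maps $q_R\from\Gamma\to\Gamma/R$ form a compatible family, $\phi_{RS}q_S=q_R$, so by the universal property of inverse limits there is a unique map $\iota\from\Gamma\to\widehat\Gamma$ with $\phi_R\iota=q_R$ for all $R$, where $\phi_R\from\widehat\Gamma\to\Gamma/R$ are the canonical projections. Each $q_R$ is a uniformly continuous map of graphs, so $\iota$ is uniformly continuous by Corollary~\ref{c:Initial uniformity}, and it is a map of graphs since all the $q_R$ are.

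It then remains to verify that $\iota$ is an embedding with dense image. For injectivity, $\iota(x)=\iota(y)$ forces $q_R(x)=q_R(y)$, i.e.\ $(x,y)\in R$, for every $R\in I$; since $\Gamma$ is Hausdorff we have $\bigcap_{R\in I}R=D(\Gamma)$, whence $x=y$. For the embedding property I would show that pulling back the fundamental cofinite entourages of $\widehat\Gamma$ recovers $I$: using $\phi_R\iota=q_R$ one computes
$$
(\iota\times\iota)^{-1}\bigl[(\phi_R\times\phi_R)^{-1}[D(\Gamma/R)]\bigr]
=(q_R\times q_R)^{-1}[D(\Gamma/R)]=R,
$$
so the uniformity induced on $\Gamma$ through $\iota$ coincides with the original one, making $\iota$ a uniform isomorphism onto its image. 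Finally, each $q_R$ is surjective, so $\phi_R(\iota(\Gamma))=q_R(\Gamma)=\Gamma/R$ for every $R$, and density of $\iota(\Gamma)$ in $\widehat\Gamma$ follows from~\ref{n:projlim1}(3).

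The verifications that the $\phi_{RS}$ and $\iota$ are maps of graphs are routine bookkeeping. The one point deserving genuine care is the embedding claim, namely that $\iota$ is a uniform isomorphism (in particular a homeomorphism) onto its image rather than merely a continuous injection; this is exactly what the displayed pullback computation secures, and it is the crux of identifying $\Gamma$ with an honest dense subgraph of $\widehat\Gamma$ rather than with a bijective but possibly finer image.
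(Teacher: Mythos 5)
Your proposal is correct, and its skeleton matches the paper's proof: the same inverse system $(\Gamma/R,\varphi_{RS})_{R\in I}$ of finite discrete quotient graphs, the same natural map induced by the quotient maps via the universal property of the inverse limit, and the same injectivity argument from $\bigcap_{R\in I}R=D(\Gamma)$. Where you genuinely diverge is at the crux, the embedding step. The paper argues topologically: it verifies the identity $\theta(R[x])=\varphi_R^{-1}(\eta_R(x))\cap\theta(\Gamma)$ for $x\in\Gamma$ and $R\in I$, so images of the basic open sets $R[x]$ are relatively open in $\theta(\Gamma)$, whence $\theta$ is a homeomorphism onto its image. You instead argue at the level of uniformities: the computation $(\iota\times\iota)^{-1}\bigl[(\phi_R\times\phi_R)^{-1}[D(\Gamma/R)]\bigr]=(q_R\times q_R)^{-1}[D(\Gamma/R)]=R$, combined with Proposition~\ref{p:Inverse limit of cofinite spaces} (these pullbacks of diagonals form a fundamental system of entourages of $\widehat\Gamma$, since $I$ is directed), shows that the subspace uniformity on $\iota(\Gamma)$ pulls back to the original uniformity of $\Gamma$, making $\iota$ a uniform isomorphism onto its image. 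This buys a slightly stronger conclusion (a uniform, not merely topological, embedding) and is arguably cleaner, as it never mentions individual open sets. Your write-up is also more complete on two points the paper glosses over: compactness and Hausdorffness of $\widehat\Gamma$ (closed subspace of a product of finite discrete spaces, or via Proposition~\ref{Inverse Limit}), and the actual density of $\iota(\Gamma)$, which you correctly extract from surjectivity of the $q_R$ together with~\ref{n:projlim1}(3) --- the paper announces that $\Gamma$ is \emph{densely} embedded but never spells out this last step.
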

\begin{proof}
Let us first see that $I$ being a fundamental system of compatible cofinite entourages of  $\Gamma$, directed by the reverse inclusion forms a directed set. This follows as the intersection of two compatible cofinite entourages is also a compatible cofinite entourage.

Let us now see that the uniform quotient graphs $\Gamma/R$ forms an inverse system of finite discrete cofinite graphs, for all $R\in I$. Let $R\leq S$ in $I$. Thus $S\subseteq R$. Let us define $\varphi_{RS}\from\Gamma/S\to\Gamma/R$ via $\varphi_{RS}(S[x]) = R[x]$, for all $x\in \Gamma$. Now, $S[x] = S[y]$ implies that $ (x,y)\in S\subseteq R$ and thus $R[x] = R[y]$. Hence $\varphi_{RS}$ is well defined. Now $S[v]\in V(\Gamma/S)$ implies that $ v\in V(\Gamma)$ so that $ R[v]\in V(\Gamma/R)$. Similarly, if $S[e]\in E(\Gamma/S)$ then we have $R[e]\in E(\Gamma/R)$. Also, $S[e]\in E(\Gamma/S)$ implies that $s(\varphi_{RS}(S[e])) = s(R[e]) = R[s(e)] = \varphi_{RS}(S[s(e)]) = \varphi_{RS}(s(S[e]))$. Similarly, $t(\varphi_{RS}(S[e])) = \varphi_{RS}(t(S[e]))$ and $\overline{\varphi_{RS}(S[e])} = \varphi_{RS}(\overline{S[e]})$. Thus $\varphi_{RS}$ is a map of graphs and since both $\Gamma/S, \Gamma/R$ are discrete, $\varphi_{RS}$ is uniformly continuous as well. Now for $R\leq S\leq T$ in $I, \varphi_{RS}(\varphi_{ST}(T[x])) = \varphi_{RS}(S[x]) = R[x] = \varphi_{RT}(T[x])$, for all $x\in X$. If $R = S$ in $I$, then $\varphi_{RS}(S[x]) = R[x] = S[x] = id_{\Gamma/S}(S[x])$, for all $x\in X$. Hence $(\Gamma/R, \varphi_{RS})_{R\leq S\in I}$ forms an inverse system of discrete cofinite graphs. Hence $\widehat{\Gamma} = \varprojlim_{R\in I}\Gamma/R$ exists. 

Let us now see that $\Gamma$ is densely embedded in $\widehat{\Gamma}$. Let $\varphi_R\from \widehat{\Gamma}\to \Gamma/R$ be the corresponding canonical projection map and let $\eta_R\from\Gamma\to\Gamma/R$ be the canonical surjection for all $R$ in $I$. Then the following diagram commutes for all $R\leq S$ in $I$, as $\varphi_{RS}(\eta_S(\gamma)) = \varphi_{RS}(S[\gamma]) = R[\gamma] = \eta_R(\gamma)$, for all $\gamma\in\Gamma$.
$$\begindc{\commdiag}[40]
\obj(10,10)[G1]{$\Gamma$}
\obj(0,0)[G2]{$\Gamma/S$}
\obj(30,0)[G3]{$\Gamma/R$}
\mor{G1}{G2}{$\eta_S$}
\mor{G2}{G3}{$\varphi_{RS}$}
\mor{G1}{G3}{$\eta_R$}
\enddc$$
Hence $(\Gamma,\eta_R)_{R\in I}$ forms a compatible system to the aforesaid inverse system of cofinite graphs. Thus there exists a uniformly continuous map of graphs $\theta\from\Gamma\to\widehat{\Gamma}$ such that the following diagram commutes for all $R$ in $I$.
$$\begindc{\commdiag}[60]
\obj(10,10)[G1]{$\Gamma$}
\obj(0,0)[G2]{$\widehat{\Gamma} = \varprojlim_{R\in I}$}
\obj(30,0)[G3]{$\Gamma/R$}
\mor{G1}{G2}{$\theta$}
\mor{G2}{G3}{$\varphi_R$}
\mor{G1}{G3}{$\eta_R$}
\enddc$$ 
Now let $x_1,x_2\in\widehat{\Gamma}$ be such that $\theta(x_1) = \theta(x_2)$. So for all $R$ in $I$ we get $R[x_1] = \eta_R(x_1) = \varphi_R(\theta(x_1)) = \varphi_R(\theta(x_2))  = \eta_R(x_2) = R[x_2]$. Thus $(x_1,x_2)\in \bigcap_{R\in I}R = D(\Gamma)$, as $\Gamma$ is Hausdorff. Hence $x_1 = x_2$. So $\theta$ is injective. 
So it remains to check that $\theta$ is a topological embedding. This follows from the claim that $\theta(R[x]) = \varphi_R^{-1}(\eta_R(x))\cap\theta(\Gamma)$, for all $x\in\Gamma$ and for all $R\in I$. The above claim follows as $p\in \theta(R[x])\Leftrightarrow$ there exists $q\in R[x]\cap \Gamma$ such that $\theta(q) = p\Leftrightarrow$ there exists $q\in\Gamma$ such that $\eta_R(q) = \eta_R(x)$ and $\theta(q) = p\Leftrightarrow$ there exists, $q\in\Gamma$ such that $\varphi_R(\theta(q)) = \eta_R(x)$ and $\theta(q) = p\Leftrightarrow \varphi_R(p) = \eta_R(x)\Leftrightarrow p\in \varphi_R^{-1}(\eta_R(x))\cap\theta(\Gamma)$.
\end{proof}
Notice that in the definition of the completion of $\Gamma$, we did not insist that $\overline\Gamma$ be a cofinite graph. However, it turns out that this will automatically be so. 
To see this, we first prove a lemma.

\begin{lemma}\label{pro entourage}
Let $\overline\Gamma$ be the completion of a cofinite graph $\Gamma$ and let $R$ be a compatible cofinite entourage of $\Gamma$. Then $\overline R$ is a compatible cofinite entourage of $\overline\Gamma$ and $\overline R\cap(\Gamma\times\Gamma)=R$.
\end{lemma}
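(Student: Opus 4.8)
The plan is to identify $\overline R$ explicitly as the preimage of the diagonal under the extension to $\overline\Gamma$ of the canonical quotient map, and then read off both conclusions from that description. Since $R$ is a compatible cofinite entourage, the quotient $\Gamma/R$ is a finite discrete graph, hence a compact Hausdorff topological graph, and the canonical surjection $\eta_R\from\Gamma\to\Gamma/R$ is a uniformly continuous map of graphs. Applying Theorem~\ref{t:completion} to $\eta_R$, I obtain a unique uniformly continuous map of graphs $\pi_R\from\overline\Gamma\to\Gamma/R$ with $\pi_R|_\Gamma=\eta_R$. I then set $\widehat R=(\pi_R\times\pi_R)^{-1}[D(\Gamma/R)]=\{(a,b)\mid\pi_R(a)=\pi_R(b)\}$, and the whole proof reduces to showing $\overline R=\widehat R$ together with the routine structural facts about $\widehat R$.

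First I would record the easy properties of $\widehat R$. Because $\pi_R$ is uniformly continuous and $D(\Gamma/R)$ is a cofinite entourage of the finite discrete space $\Gamma/R$, Lemma~\ref{n:Properties of cofinite entourages}(4) shows $\widehat R$ is a cofinite entourage of $\overline\Gamma$; since $\Gamma/R$ is discrete, $D(\Gamma/R)$ is clopen and so $\widehat R$ is clopen in $\overline\Gamma\times\overline\Gamma$. Compatibility follows from the fact that preimages of compatible cofinite entourages under uniformly continuous maps of graphs are again compatible, or directly from the description of $\widehat R$: that $\pi_R$ is a map of graphs gives the vertex/edge splitting and propagates sources, targets and inverses through $\widehat R$, while the graph axiom $f\ne\overline f$ in $\Gamma/R$ forces $(e,\overline e)\notin\widehat R$. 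Finally, a pair $(x,y)\in\Gamma\times\Gamma$ lies in $\widehat R$ iff $\eta_R(x)=\eta_R(y)$ iff $(x,y)\in R$, so $\widehat R\cap(\Gamma\times\Gamma)=R$; this is the second assertion of the lemma once $\overline R=\widehat R$ is established.

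The heart of the argument is the equality $\overline R=\widehat R$. Since $R=\widehat R\cap(\Gamma\times\Gamma)\subseteq\widehat R$ and $\widehat R$ is closed, we get $\overline R\subseteq\widehat R$ immediately. For the reverse inclusion I use density: $\Gamma$ is dense in $\overline\Gamma$, hence $\Gamma\times\Gamma$ is dense in $\overline\Gamma\times\overline\Gamma$, and the intersection of a dense set with the open set $\widehat R$ is dense in $\widehat R$; thus $R=\widehat R\cap(\Gamma\times\Gamma)$ is dense in $\widehat R$, and as $\widehat R$ is closed this yields $\widehat R\subseteq\overline R$. Concretely, given $(a,b)\in\widehat R$ and neighborhoods $U\ni a$, $V\ni b$ on which the locally constant map $\pi_R$ takes the common value $\pi_R(a)=\pi_R(b)$, any $x\in U\cap\Gamma$ and $y\in V\cap\Gamma$ satisfy $\eta_R(x)=\eta_R(y)$, i.e. $(x,y)\in R\cap(U\times V)$, so that $(a,b)\in\overline R$. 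Combining the two inclusions gives $\overline R=\widehat R$, and the structural facts above complete the proof.

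The step I expect to require the most care is the reverse inclusion $\widehat R\subseteq\overline R$: it is precisely here that the density of $\Gamma$ and the local constancy of $\pi_R$ (equivalently, the clopenness of $\widehat R$) are combined to approximate an arbitrary pair of $\widehat R$ by genuine $R$-related pairs drawn from $\Gamma$. Everything else—extending $\eta_R$ via Theorem~\ref{t:completion}, and verifying that the preimage of the diagonal is a compatible cofinite entourage via Lemma~\ref{n:Properties of cofinite entourages}(4)—is routine.
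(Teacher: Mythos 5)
Your proposal is correct and follows essentially the same route as the paper: extend $\eta_R$ to $\overline\Gamma$ via Theorem~\ref{t:completion}, identify $\overline R$ with the (clopen) preimage of the diagonal of $\Gamma/R$, and obtain the two inclusions from closedness on one side and density of $\Gamma\times\Gamma$ plus openness on the other. The only difference is cosmetic: the paper additionally verifies surjectivity of the extension and exhibits an isomorphism $\overline\Gamma/\overline\eta_R^{-1}\overline\eta_R\cong\Gamma/R$, which your argument rightly shows is not needed for the lemma's conclusions.
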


\begin{proof}
The quotient $\Gamma/R$ is a compact Hausdorff topological graph and the quotient map $\eta_R\from\Gamma\to\Gamma/R$ is uniformly continuous. 
So by Theorem~\ref{t:completion}, $\eta_R$ extends to a continuous map of graphs 
$\overline\eta_R\from\overline\Gamma\to\Gamma/R$. Using Corollary~\ref{completion of maps} and as $\eta_R$ is surjective, $\overline\eta_R(\overline{\Gamma}) = \overline{\eta_R(\Gamma)} = \overline{\Gamma/R} = \Gamma/R$. Thus $\overline\eta_R$ is surjective as well. Since $D(\Gamma/R)$ is a compatible cofinite entourage over $\Gamma/R$ and $(\overline\eta_R \times \overline\eta_R)^{-1}[D(\Gamma/R)] = \overline\eta_R^{-1}\overline\eta_R$ we see that $\overline\eta_R^{-1}\overline\eta_R$ is a compatible cofinite equivalence relation over $\overline{\Gamma}$ and thus endowed with the quotient topology $\overline{\Gamma}/\overline\eta_R^{-1}\overline\eta_R$ is a discrete quotient graph of $\overline{\Gamma}$ and we claim that the map $\overline\eta_R$determines an isomorphism of topological graphs $\Psi\from\overline\Gamma/\overline\eta_R^{-1}\overline\eta_R\to\Gamma/R$. Let us define $\Psi(\overline\eta_R^{-1}\overline\eta_R[x]) = \overline\eta_R[x]$ for all $x$ in $\overline{\Gamma}$. If $\overline\eta_R^{-1}\overline\eta_R[x] = \overline\eta_R^{-1}\overline\eta_R[y]$ then $(x,y)\in \overline\eta_R^{-1}\overline\eta_R$ so that $\overline\eta_R(x) = \overline\eta_R(y)$. Hence $\Psi$ is a well defined injection. As $\overline\eta_R$ is a surjective map of graphs so is $\Psi$. Since both $\overline\Gamma/\overline\eta_R^{-1}\overline\eta_R, \Gamma/R$ are discrete topological graphs, both $\Psi , \Psi^{-1}$ are uniformly continuous and our claim that $\Psi$  is an isomorphism of  topological graphs follows. 

Since $\overline\eta_R^{-1}\overline\eta_R\cap(\Gamma\times\Gamma)=\eta_R^{-1}\eta_R=R$. 
It now suffices to show that $\overline\eta_R^{-1}\overline\eta_R=\overline R$. First note that $R=\eta_R^{-1}\eta_R\subset\overline\eta_R^{-1}\overline\eta_R$ and that $\overline\eta_R^{-1}\overline\eta_R$ is closed in $\overline\Gamma\times\overline\Gamma$ as $\Gamma/R$ is finite and discrete and thus $D(\Gamma/R)$ is a clopen subset of $\Gamma/R\times \Gamma/R$;
whence $\overline R\subseteq\overline\eta_R^{-1}\overline\eta_R$.
Conversely, let $z\in\overline\eta_R^{-1}\overline\eta_R$ and let $V$ be a neighborhood of $z$ in $\overline\Gamma\times\overline\Gamma$. 
Then $V\cap\overline\eta_R^{-1}\overline\eta_R$ is also a neighborhood of~$z$. 
However, $\Gamma\times\Gamma$ is dense in $\overline\Gamma\times\overline\Gamma$, so  we ca say that $V\cap R=V\cap\overline\eta_R^{-1}\overline\eta_R\cap(\Gamma\times\Gamma) \ne\emptyset.
$. Therefore $z\in\overline R$ and $\overline\eta_R^{-1}\overline\eta_R\subseteq\overline R$. Thus the claim.
\end{proof}

\begin{theorem} \label{t:Completion is cofinite}
Let $\Gamma$ be a cofinite graph and let $I$ be the filter base of all compatible cofinite entourages of $\Gamma$. 
Then the completion $\overline\Gamma$ is also a cofinite graph and 
$\{\overline R \mid R\in I\}$ is the filter base of all compatible cofinite entourages of $\overline\Gamma$.
\end{theorem}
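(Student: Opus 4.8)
The plan is to show that the completion $\overline\Gamma$ satisfies the definition of a cofinite graph (Definition~\ref{d:Cofinite graph}) and then to identify its filter base of compatible cofinite entourages with $\{\overline R\mid R\in I\}$. Since $\overline\Gamma$ is by hypothesis compact and Hausdorff, the quickest route is to invoke Lemma~\ref{pro entourage} together with the density of $\Gamma$ and the characterization of profinite graphs. First I would note that $\overline\Gamma$, being compact, Hausdorff and (as I must verify) totally disconnected, is a profinite graph, hence a cofinite graph whose compatible cofinite entourages form a fundamental system of entourages by Corollary~\ref{c:ProGraph}.

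The key steps, in order, are as follows. \emph{Step one:} verify that $\overline\Gamma$ is totally disconnected. By Lemma~\ref{pro entourage}, each $\overline R$ (for $R\in I$) is a compatible cofinite entourage of $\overline\Gamma$, and these are co-discrete equivalence relations. I would show $\bigcap_{R\in I}\overline R=D(\overline\Gamma)$: if $(x,y)\in\bigcap_{R\in I}\overline R$ but $x\ne y$, then by Hausdorffness I can separate $x$ and $y$ by disjoint clopen neighborhoods, and since the $\overline R[x]$ shrink (each $\overline\eta_R$ factors the identity), for some $R$ the class $\overline R[x]$ cannot contain $y$, a contradiction. Together with the characterization in~\ref{n:ProSpace}, this makes $\overline\Gamma$ a profinite space, and combined with the graph structure, a profinite graph. \emph{Step two:} by Corollary~\ref{c:ProGraph}, the compatible cofinite equivalence relations of $\overline\Gamma$ form a fundamental system of entourages, so $\overline\Gamma$ is a cofinite graph.

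\emph{Step three:} I must show $\{\overline R\mid R\in I\}$ is \emph{all} of the compatible cofinite entourages of $\overline\Gamma$, not merely a subfamily. Lemma~\ref{pro entourage} already gives that each $\overline R$ is one such entourage and that $\overline R\cap(\Gamma\times\Gamma)=R$, so the map $R\mapsto\overline R$ is injective. For the reverse inclusion, let $S$ be any compatible cofinite entourage of $\overline\Gamma$. Then $R:=S\cap(\Gamma\times\Gamma)$ is a compatible cofinite entourage of $\Gamma$ by Proposition~\ref{n:uniform subspace is cofinite} and the compatibility argument preceding the uniform quotient graph proposition; thus $R\in I$. It remains to check that $\overline R=S$. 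Since $S$ is closed (it is clopen, being a cofinite entourage on a compact Hausdorff space) and contains $R$, we get $\overline R\subseteq S$. Conversely, by Lemma~\ref{n:Cofinite Properties}(1) applied in $\overline\Gamma$, $S=\overline S=\bigcap_{T}T S T$ over the entourages $T$ of $\overline\Gamma$; but more directly, density of $\Gamma\times\Gamma$ in $\overline\Gamma\times\overline\Gamma$ forces $S=\overline{S\cap(\Gamma\times\Gamma)}=\overline R$ by exactly the closure-from-density argument in the final paragraph of Lemma~\ref{pro entourage}.

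\textbf{The main obstacle} I anticipate is Step three, specifically the equality $S=\overline R$ for an arbitrary compatible cofinite entourage $S$ of $\overline\Gamma$. The subtle point is that $S$ need not \emph{a priori} arise as the closure of its own restriction to $\Gamma\times\Gamma$; one must use that $S$ is clopen in $\overline\Gamma\times\overline\Gamma$ and that $\Gamma\times\Gamma$ is dense there, so $S$ is the closure of $S\cap(\Gamma\times\Gamma)$. This is where the compactness of $\overline\Gamma$ is genuinely used (via Lemma~\ref{n:Properties of cofinite entourages}(5), every co-discrete equivalence relation on a compact Hausdorff space is a cofinite entourage, and such entourages are clopen). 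Once $S$ is known to be clopen, the density argument closes the gap cleanly, and the bijection $R\leftrightarrow\overline R$ between the two filter bases follows.
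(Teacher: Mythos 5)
Your Step three is correct, and it is essentially the same density argument the paper itself uses (restrict a compatible cofinite entourage $S$ of $\overline\Gamma$ to $\Gamma\times\Gamma$, note $S$ is clopen, and recover $S$ as the closure of its restriction). But the real crux of the theorem is the point you dispose of in Step one, and your argument there is circular. You claim that, given $(x,y)\in\bigcap_{R\in I}\overline R$ with $x\neq y$, ``by Hausdorffness I can separate $x$ and $y$ by disjoint clopen neighborhoods.'' Hausdorffness only gives disjoint \emph{open} neighborhoods; the existence of separating \emph{clopen} neighborhoods in a compact Hausdorff space is equivalent to total disconnectedness, which is exactly what Step one is supposed to establish (the interval $[0,1]$ is compact Hausdorff and has no nontrivial clopen sets). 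The follow-up clause, that ``the $\overline R[x]$ shrink (each $\overline\eta_R$ factors the identity),'' is likewise unsupported: the assertion $\bigcap_{R\in I}\overline R[x]=\{x\}$ is just a restatement of the claim $\bigcap_{R\in I}\overline R=D(\overline\Gamma)$, i.e. that the maps $\overline\eta_R$ separate points of $\overline\Gamma$, and nothing in your proposal proves this.

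What closes the gap is precisely the step your proposal omits and the paper supplies: every entourage $W$ of $\overline\Gamma$ contains some $\overline R$. One may assume $W$ is closed, since the closed entourages form a fundamental system in any uniform space; then $W\cap(\Gamma\times\Gamma)$ is an entourage of the subspace uniformity on $\Gamma$, hence contains some $R\in I$ because $\Gamma$ is a cofinite graph, and then $\overline R\subseteq\overline W=W$. This single argument yields the fundamental-system property directly (so $\overline\Gamma$ is a cofinite graph with no detour through profiniteness and Corollary~\ref{c:ProGraph}), and it also delivers the point separation you wanted in Step one: if $x\neq y$, choose a closed entourage $W$ of the compact Hausdorff space $\overline\Gamma$ with $(x,y)\notin W$, and conclude $(x,y)\notin\overline R$ for the resulting $R$. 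Alternatively, you could obtain total disconnectedness legitimately by using uniqueness of completions to identify $\overline\Gamma$ with $\widehat\Gamma=\varprojlim_{R\in I}\Gamma/R$ from Theorem~\ref{Existence}, which is profinite by~\ref{n:ProSpace}; but as written, Step one has no valid proof, and Steps two and three both rest on it.
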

\begin{proof}
We will first see that $\{\overline R \mid R\in I\}$ forms the filter base of all compatible cofinite entourages of $\overline\Gamma$. For let $\overline R, \overline S$ be the compatible cofinite entourages over $\overline{\Gamma}$ for $R, S$ in $I$. Then there is $T\in I$ such that $T\subseteq R\cap S$. Now $\overline T\subseteq \overline{R\cap S}\subseteq \overline R\cap \overline S$. Now let $K$ be any compatible cofinite entourage over $\overline{\Gamma}$. Then $K\cap(\Gamma\times \Gamma)$ is a compatible cofinite entourage over $\Gamma$. Hence there exists some $R$ in $I$, such that $R = K\cap(\Gamma\times \Gamma)$. Since $K$ is open in $\overline{\Gamma}\times \overline{\Gamma}$, any open set $U$ in $K$ is also open in $\overline{\Gamma}\times\overline{\Gamma}$. Now for all $(x,y)\in K$ and $U\in\eta_(x,y)$ in $K$, $U\cap (\Gamma\times \Gamma)\ne\emptyset$ as $\Gamma\times \Gamma$ is dense in $\overline{\Gamma}\times \overline{\Gamma}$. Hence $U\cap (K\cap(\Gamma\times \Gamma)) = U\cap R\ne\emptyset$ and thus $R$ is dense in $K$. It follows that $\overline R = \overline K = K$. Hence $\{\overline R \mid R\in I\}$ forms the filter base of all compatible cofinite entourages over $\overline{\Gamma}$. It remains to show that $\{\overline R\mid R\in I\}$ is a fundamental system of entourages of $\overline{\Gamma}$. For this purpose let $W$ be any entourage of $\overline{\Gamma}$. We may assume that $W$ is closed in $\overline{\Gamma}\times\overline{\Gamma}$, as the closed entourages form a fundamental system of entourages. Since $W\cap(\Gamma\times\Gamma)$ is an entourage of $\Gamma$ and $\Gamma$ is a cofinite graph, there exists $R\in I$ such that $R\subseteq W\cap(\Gamma\times\Gamma)$. Now $\overline R\subseteq \overline W = W$ and we see that every entourage of $\overline{\Gamma}$ contains a member of the set $\{\overline R\mid R\in I\}$, as required.
\end{proof}

It follows from Theorem~\ref{t:Completion is cofinite} that the completion of a cofinite graph is a profinite graph.

\bibliographystyle{amsplain}

\end{document}